\newtheorem{theorem}{Theorem}[section]
\newtheorem{corollary}[theorem]{Corollary}
\newtheorem{lemma}[theorem]{Lemma}
\newtheorem{prop}[theorem]{Proposition}
\newtheorem{conj}[theorem]{Conjecture}
\numberwithin{equation}{section}
\theoremstyle{definition}
\theoremstyle{definition}
\newtheorem{remark}[theorem]{Remark}
\theoremstyle{remark}
\theoremstyle{remark}
\theoremstyle{theorem}
\newtheorem*{problem}{Model Problem}
\theoremstyle{definition}
\newtheorem*{ack}{Acknowledgments}
\title[A variation of the prime k-tuples conjecture and quantum limits]{A variation of the prime k-tuples conjecture with applications to quantum limits}
\author{Oliver McGrath}
\address{Mathematical Institute, University of Oxford, Oxford, OX2 6GG, UK}
\email{oliver.mcgrath@maths.ox.ac.uk}
\begin{document}
\maketitle

\begin{abstract}
Let $\mathcal{H}^{*}=\{h_1,h_2,\ldots\}$ be an ordered set of integers. We give sufficient conditions for the existence of increasing sequences of natural numbers $a_j$ and $n_k$ such that $n_k+h_{a_j}$ is a sum of two squares for every $k\geq 1$ and $1\leq j\leq k.$ Our method uses a novel modification of the Maynard-Tao sieve together with a second moment estimate. As a special case of our result, we deduce a conjecture due to D.~Jakobson which has several implications for quantum limits on flat tori.
\end{abstract}


\section{Introduction}

We say that a set $\mathcal{H}=\{h_1,\ldots,h_k\}$ of distinct integers is {\it admissible} if $\#\{\mathcal{H}\,\,(\text{mod}\,\,p)\}<p$ for every prime $p.$ An outstanding problem in analytic number theory is the prime $k$-tuples conjecture, which asserts the following.
\begin{conj}
Let $\mathcal{H}=\{h_1,\ldots,h_k\}$ be admissible. Then there exists infinitely many integers $n$ such that the translates $n+h_1,\ldots,n+h_k$ are prime.
\end{conj}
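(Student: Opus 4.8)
The plan is to attack the conjecture through the weighted sieve of Goldston--Pintz--Yıldırım in the form sharpened by Maynard and Tao. Fix an admissible set $\mathcal{H}=\{h_1,\ldots,h_k\}$ and a large parameter $x$. To each integer $n\in(x,2x]$ attach the nonnegative weight
\[
w_n=\Bigl(\sum_{\substack{d_i\mid n+h_i\ \forall i\\ d_1\cdots d_k\le x^{\theta}}}\mu(d_1)\cdots\mu(d_k)\,F\!\Bigl(\tfrac{\log d_1}{\log x},\ldots,\tfrac{\log d_k}{\log x}\Bigr)\Bigr)^{2},
\]
where $F$ is a smooth function supported on the simplex $\{t_i\ge 0,\ \sum t_i\le 1\}$ and $\theta$ is the available level of distribution for the primes. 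The strategy is to compare the total mass $S_1=\sum_n w_n$ with the prime-detecting sum $S_2=\sum_n\bigl(\sum_{i=1}^k\mathbf{1}_{\,n+h_i\text{ prime}}\bigr)w_n$. Since the inner count is an integer in $[0,k]$ and the weights are nonnegative, an inequality $S_2>(k-1)S_1$ for arbitrarily large $x$ would force, for such $x$, the existence of $n\in(x,2x]$ with all $k$ translates prime; this produces infinitely many $n$ and hence the conjecture.

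The evaluation of $S_1$ is the routine part: expanding the square, the diagonal terms dominate, and the fundamental lemma of the sieve reduces the sum to a $k$-dimensional Mellin integral whose main term is $c(F)\,x\,(\log x)^{-k}$ with $c(F)=\int_{[0,1]^{k}}\bigl(\partial_{t_1}\cdots\partial_{t_k}F\bigr)^{2}$ (up to the singular series of $\mathcal{H}$). For each of the $k$ pieces of $S_2$ one detects the primality of $n+h_i$, is left with a sum of the von Mangoldt function over arithmetic progressions of moduli up to $x^{\theta}$, and — provided $\theta<1/2$ — controls the error by the Bombieri--Vinogradov theorem; the main term is again an explicit quadratic functional of $F$, namely a sum of the $k$ integrals in which one of the derivatives is removed and a boundary trace is taken. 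Everything then collapses to a calculus-of-variations problem: maximize the ratio of the $S_2$-functional to the $S_1$-functional over admissible $F$ and over $k$, exactly the optimization Maynard carries out.

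The step I expect to be the main — indeed the decisive — obstacle is this last one. With $\theta<1/2$ the Maynard--Tao optimization shows the weighted average number of primes among the $k$ translates exceeds roughly $\log k-C$; this tends to infinity, which is why the method yields bounded gaps and, for each $m$, infinitely many intervals of bounded length containing $m$ primes, but the quantity produced grows only like $\log k$, hopelessly short of the $k-1$ threshold needed here. Enlarging $\theta$ toward the Elliott--Halberstam value $1$ improves the bound only by a bounded factor and still does not close the gap. Beneath this quantitative shortfall lies the parity obstruction of Selberg: a sieve assembled from $\mu$ cannot separate integers with an even number of prime factors from those with an odd number, and so cannot certify that each of $k\ge 2$ prescribed linear forms is simultaneously genuinely prime. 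For these reasons the prime $k$-tuples conjecture as stated lies outside the reach of the present circle of ideas; the point of departure for the remainder of this paper is therefore to replace the condition ``$n+h_i$ prime'' by the weaker condition ``$n+h_i$ a sum of two squares'', whose indicator has a well-behaved average over arithmetic progressions and is insensitive to the parity phenomenon, and to combine the resulting Maynard--Tao-type construction with a second moment estimate so as to obtain the unconditional variation announced in the abstract.
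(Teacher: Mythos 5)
You have correctly identified that this statement is a conjecture, not a theorem: the paper offers no proof of it, and explicitly states it is ``far out of reach of current techniques.'' Your write-up is therefore not a failed proof but an accurate explanation of why none exists; you set up the standard GPY/Maynard--Tao machinery, correctly observe that the optimization yields only on the order of $\log k$ simultaneous primes (so the $k-1$ threshold needed to force all translates prime cannot be reached, even assuming Elliott--Halberstam), and correctly invoke the parity obstruction as the deeper structural barrier. This matches the paper's own assessment and its motivation for replacing ``prime'' with ``sum of two squares,'' where two-point correlations are available and the parity problem does not bite. One minor point of alignment: the paper's actual contribution is not merely the replacement of the target set but also the modification of the sieve weights to allocate fixed sieving power to each bin and a second-moment/concentration argument to control which translates are accepted; your closing sentence gestures at this, and it is the genuinely new ingredient beyond the standard Maynard--Tao framework you describe.
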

A proof of this conjecture is far out of reach of current techniques. However, we have been successful in establishing various weak versions of this result using sieve methods. For example, the Maynard-Tao sieve can be used to show that $\gg \log{k}$ of the translates are simultaneously prime infinitely often, when $k$ is sufficiently large (cf.~\cite{Maynard, Poly}). 

We extend the definition of admissibility to infinite ordered sets and say $\mathcal{H}^*=\{h_1,h_2,\ldots\}$ is admissible if the finite truncation $\{h_1,\ldots,h_k\}\subseteq \mathcal{H}^*$ is admissible for every $k\geq 1.$ In this paper we are interested in the following variation of this conjecture, for numbers representable as a sum of two squares.

\begin{conj}\label{conj:inf}
Let $\mathcal{H}^*=\{h_1,h_2,\ldots\}$ be admissible. Then there exists an increasing sequence of integers $n_k$ such that, for every $k\geq 1,$ the translates $n_k+h_1,\ldots,n_k+h_k$ are sums of two squares.
\end{conj}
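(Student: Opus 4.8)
The plan is to transplant the Maynard--Tao sieve to sums of two squares and to add a second moment step that pushes its conclusion past the usual ``many of the translates'' towards the whole tuple. Write $b(m)=1$ if $m$ is a sum of two squares and $b(m)=0$ otherwise; recall that $b$ is supported on those $m$ in which every prime $q\equiv 3\pmod 4$ occurs to an even power, that $\sum_{m\le x}b(m)\sim Kx(\log x)^{-1/2}$ (Landau), and that $\sum_m b(m)m^{-s}=\zeta(s)^{1/2}L(s,\chi_4)^{1/2}G(s)$ with $G$ holomorphic and non-vanishing for $\operatorname{Re} s>\tfrac12$, so that $b$ behaves like a sieve problem of dimension $\tfrac12$ in each coordinate. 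I would first observe that the statement is equivalent to: for every $k$, the set $A_k$ of $n$ with $n+h_1,\dots ,n+h_k$ all sums of two squares is infinite --- the truncations being nested, an increasing $(n_k)$ with $n_k\in A_k$ is then extracted diagonally, and conversely the conclusion forces each $A_k$ infinite. So fix $k$ and a large $N$.

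Choose $W=\prod_{p\le w}p$ with $w\to\infty$ slowly and, by admissibility, a residue $\nu\bmod W$ such that each $n+h_i$ is coprime to $W$; I would take $\nu$ so that moreover $q\nmid n+h_i$ for every prime $q\equiv 3\pmod 4$ below $w$, removing the small local obstructions to being a sum of two squares. Restrict to $n\equiv\nu\pmod W$ in $[N,2N]$ and introduce the non-negative weights
\[
\mathsf w_n=\Bigl(\sum_{\substack{d_i\mid n+h_i\ (1\le i\le k)\\ \prod_i d_i\le R}}\lambda_{d_1,\dots ,d_k}\Bigr)^{2},\qquad R=N^{\theta},
\]
with $\theta$ a fixed small exponent and $\lambda_{\mathbf d}=\bigl(\prod_i\mu(d_i)\bigr)F\bigl(\tfrac{\log d_1}{\log R},\dots ,\tfrac{\log d_k}{\log R}\bigr)$ for a fixed smooth $F$ supported on $\{x_i\ge 0,\ \sum_i x_i\le 1\}$. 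The modification relative to the prime case is that the admissible range of the $d_i$ and the eventual optimisation of $F$ must be performed against the Euler product of $b$: the natural local weight at a prime is $\asymp 1+O(1/p)$ rather than $(1-1/p)^{-1}$, and the $d_i$ are reweighted (or their support restricted, for instance to primes $\equiv1\bmod 4$) so that the diagonalisation of the resulting quadratic form still goes through with a half-dimensional main term.

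The analytic input is a Bombieri--Vinogradov theorem for sums of two squares: a bound
\[
\sum_{q\le N^{\theta}}\ \max_{(a,q)=1}\ \Bigl|\sum_{\substack{m\le x\\ m\equiv a\,(q)}}b(m)-\mathfrak g(a,q)\,\frac{Kx}{\sqrt{\log x}}\Bigr|\ \ll_A\ \frac{x}{(\log x)^{A}},
\]
which one expects to obtain from the $\zeta^{1/2}L^{1/2}$ structure together with the large sieve and classical mean-value estimates; the mean-square (Barban--Davenport--Halberstam) form of this distribution statement is what controls the error terms generated by the sieve manipulation --- this is the first place a ``second moment estimate'' enters. Granting it, the standard computation evaluates $S_1:=\sum_n\mathsf w_n$ and, for each $i$, $S_2^{(i)}:=\sum_n\mathsf w_n b(n+h_i)$, and the half-dimensionality makes the boosted density $S_2^{(i)}/S_1$ a positive constant of size $\asymp\sqrt{\theta}\,K\,J_k^{(i)}(F)/I_k(F)$, where $I_k(F)=\int F^2$ and $J_k^{(i)}(F)=\int\bigl(\int F\,dx_i\bigr)^2$, rather than the unsifted density $K(\log N)^{-1/2}$. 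Optimising $F$ as Maynard does, $\sum_i S_2^{(i)}/S_1\gg\sqrt\theta\log k$, so some $n$ in the support (which is non-empty for arbitrarily large $N$) has $\gg\log k$ of the $n+h_i$ equal to sums of two squares.

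The main obstacle is the gap between $\log k$ and $k$: one cannot make $\sum_i S_2^{(i)}/S_1$ exceed $k-1$, and the full correlation $\sum_n\mathsf w_n\prod_i b(n+h_i)$ cannot be evaluated unconditionally, its main term being the still-open sum-of-two-squares analogue of the Hardy--Littlewood asymptotic. To bridge it I would evaluate, or bound from below, the pair correlations $\sum_n\mathsf w_n b(n+h_i)b(n+h_j)$, form the variance $\sum_n\mathsf w_n\bigl(\sum_i b(n+h_i)-\mu\bigr)^2$ with $\mu=\sum_i S_2^{(i)}/S_1$, and exploit that after pre-sieving the events ``$n+h_i$ is a sum of two squares'' are sufficiently weakly correlated that $\sum_i b(n+h_i)$ concentrates; a positive proportion of the $\mathsf w_n$-mass then lies on $n$ with all but $o(k)$ of the $n+h_i$ sums of two squares, and a final clean-up sieve would have to handle the few remaining coordinates. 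Making this variance bound strong enough to reach \emph{every} $i\le k$ for an arbitrary admissible $\mathcal H^{*}$ is, I expect, the crux of the whole argument, and the point at which a regularity hypothesis on $\mathcal H^{*}$ is forced --- with the general theorem presumably delivering only a subsequence $h_{a_j}$, and Jakobson's conjecture recovered by verifying the hypothesis for the explicit set arising from the flat torus.
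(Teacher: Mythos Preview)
The statement you are attempting is Conjecture~1.2, and the paper does \emph{not} prove it: it is stated as an open conjecture, and the paper's actual theorem (Theorem~1.4) is precisely the weaker subsequence version you anticipate in your final paragraph --- for some increasing $a_j$ and $n_k$ one has $n_k+h_{a_j}$ a sum of two squares for $1\le j\le k$, under the extra hypothesis $4\mid h_i$. So there is no proof of the full conjecture to compare against; the gap you flag (``reaching every $i\le k$'') is exactly where the paper stops too.

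That said, your sketch of the route to the weaker result differs from the paper's in two substantive ways. First, you work with the indicator $b(m)$ and hope for a Bombieri--Vinogradov input. The paper cannot evaluate the two-point correlation $\sum_n b(n+h)b(n+h')$ asymptotically, and instead replaces $b$ by Hooley's weighted function $\rho(n)=t(n)r_2(n)$, for which $\sum_n\rho(n+h)\rho(n+h')$ \emph{can} be computed via Plaksin-type estimates for $r(n)r(n+h)$ in arithmetic progressions, with power-saving error terms --- so no large-sieve averaging is needed at all, and the errors are bounded trivially. Second, your variance is for the global count $\sum_i b(n+h_i)$, aiming for ``all but $o(k)$'' followed by a clean-up. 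The paper's mechanism is different and is what makes the argument close: one partitions $\{h_1,\dots,h_k\}$ into fixed finite bins $B_1,\dots,B_M$, \emph{factorises the sieve weight} $F=\prod_i F_i$ so that bin $i$ is allocated its own portion $\beta_i$ of the sifting level (e.g.\ $\beta_i=2^{-i}$), and then runs a Chebyshev/second-moment bound \emph{bin by bin} to force, with positive $w_n$-probability, at least one hit in every bin simultaneously. A pigeonhole/diagonal extraction then pins down the fixed subsequence $h_{a_j}$. Your single global variance would not give this: without the asymmetric weight allocation, the $w_n$-probability that any \emph{fixed} coordinate $n+h_i$ is a sum of two squares tends to $0$ as $k\to\infty$, so concentration of the total count cannot force any prescribed $h_i$ to appear, and there is no ``clean-up sieve'' that recovers the missing $o(k)$ coordinates.
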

 
We remark that if we replaced ``sums of two squares" with ``prime" here, then this would simply be a reformulation of Conjecture~1.1. (It is easy to show that any finite admissible set can be extended to an infinite admissible set.)

Our interest in this version of the conjecture stems from a problem which appears towards the end of D.~Jakobson's {\it``Quantum limits on flat tori"} paper~\cite{Jak}. In this paper Jakobson is concerned with characterising the possible quantum limits that can arise on the standard flat $d$-dimensional torus $\mathbb{T}^d=\mathbb{R}^d/\mathbb{Z}^d.$ A complete classification of such objects is established in two dimensions, with possible behaviours in higher dimensions described unconditionally for $d\geq 4,$ and conditionally for $d=3$ on a weak version of Conjecture~\ref{conj:inf} \hbox{(cf.~\cite[Conjecture 8.2]{Jak})}. 

In this paper we establish Jakobson's conjecture.

\begin{theorem}\label{theo:JAK}
\sloppy There exists increasing sequences of natural numbers $a_j$ and $M_k$ such that $M_k-a_j^2$ is a sum of two squares for $1\leq j \leq k.$  Moreover, the sequence $a_j$ is such that:
\begin{enumerate} 
	\item $r_{2}(a_j)<r_{2}(a_{j+1})$ for all $j\geq 1.$ 
	\item The even parts are uniformly bounded; that is to say, if we write $a_j = 2^{b_j}m_j$ where $(m_j,2)=1,$ then $b_j = O(1)$ uniformly for $j\geq 1$.
\end{enumerate}
\end{theorem}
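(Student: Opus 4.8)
My plan is to prove first the general statement announced in the abstract --- for a suitable admissible ordered set $\mathcal{H}^{*}=\{h_1,h_2,\dots\}$ there exist increasing sequences of natural numbers $t_j$ and $n_k$ with $n_k+h_{t_j}$ a sum of two squares for all $1\le j\le k$ --- and then to derive Theorem~\ref{theo:JAK} from it by a suitable specialisation of $\mathcal{H}^{*}$. For the specialisation I would take $h_i=-c_i^{2}$, where $(c_i)_{i\ge1}$ is the increasing sequence of \emph{odd} positive integers in which $c_i$ is the product of the first $i$ primes congruent to $1$ modulo $4$, so that $r_2(c_i)=4\cdot2^{i}$ is strictly increasing in $i$. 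Then $n_k+h_{t_j}=n_k-c_{t_j}^{2}$, so putting $M_k:=n_k$ and $a_j:=c_{t_j}$ produces increasing sequences of natural numbers with $M_k-a_j^{2}$ a sum of two squares for $1\le j\le k$, which is the first assertion. Property~(2) is then immediate: every $a_j=c_{t_j}$ is odd, so $b_j=0=O(1)$ uniformly. Property~(1) holds because $r_2(c_i)$ is strictly increasing in $i$ and $t_1<t_2<\cdots$, so $r_2(a_j)=r_2(c_{t_j})$ is strictly increasing in $j$.

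It remains to check that $\mathcal{H}^{*}=\{-c_i^{2}:i\ge1\}$ satisfies the hypotheses of the general theorem, and here sets of this shape are very accommodating. Modulo an odd prime $p$ the residues $-c_i^{2}$ are negatives of squares, hence take at most $(p+1)/2<p$ distinct values; modulo $2$ the oddness of the $c_i$ gives the single value $-c_i^{2}\equiv1\pmod2$. Thus $\mathcal{H}^{*}$ is admissible, and every finite truncation has positive singular series. The remaining hypotheses will be the local conditions at $2$ and at the primes $q\equiv3\pmod4$ governing solubility of $x^{2}+y^{2}=n-c_i^{2}$, and these hold for the same structural reason: for $q\equiv3\pmod4$ one puts the sieve residue of $n$ in a quadratic non-residue class modulo $q$, so that $q\nmid n-c_i^{2}$ for every $i$ (each $c_i^{2}$ being a residue); and at the prime $2$ one takes $n\equiv2\pmod4$, so that $n-c_i^{2}\equiv1\pmod4$ (using $c_i^{2}\equiv1\pmod8$), which is consistent with being a sum of two squares. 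These congruences are mutually compatible and single out one admissible class for $n$, exactly as the sieve requires; and the rapid but harmless growth $c_i\le\exp(O(i\log i))$ plays no role, since for each fixed $k$ one only needs $n_k$ to exceed $c_{t_k}^{2}$.

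Granting the general theorem, the deduction above is routine, so I expect the real obstacle to be the general theorem itself. Its proof should combine a modification of the Maynard--Tao sieve --- producing, for each $k$, a value $n$ for which many of the translates $n+h_i$ with $i$ in a suitable window are sums of two squares --- with a second-moment (variance) estimate for the count of such translates. The delicate part will be to establish a level-of-distribution statement for sums of two squares in arithmetic progressions that is uniform enough over the moduli carried by the sieve weights for the variance bound to be nontrivial, and then to pass from ``many translates work on average'' to ``a fixed, growing sub-collection works simultaneously'', uniformly in $k$; the last point requires a diagonalisation in which each stage extends the already-chosen indices by one, so that a single sequence $(a_j)$ serves every $k$.
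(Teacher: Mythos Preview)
Your reduction is exactly the paper's strategy: deduce Theorem~\ref{theo:JAK} from the general Theorem~\ref{theo:intro1.1} by specialising $\mathcal{H}^*=\{-c_i^2\}$ for a sequence $(c_i)$ with strictly increasing $r_2(c_i)$ and uniformly bounded $2$-adic part. The one discrepancy is that Theorem~\ref{theo:intro1.1} as stated in the paper requires $4\mid h_i$; this is why the paper takes $c_i=2\cdot 5^i$ rather than an odd sequence. Your odd $c_i$ give $h_i\equiv 3\pmod 4$, so Theorem~\ref{theo:intro1.1} does not apply as a black box. You have correctly isolated the underlying constraint --- what is really needed is that all translates $n+h_i$ lie in the residue class $1\pmod 4$ --- and your proposed fix of taking $n\equiv 2\pmod 4$ instead of $n\equiv 1\pmod 4$ would indeed achieve this, but then you must rerun the sieve calculations in that class rather than quote the theorem. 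The cleanest repair is simply to replace your $c_i$ by $2c_i$: since $r_2(2m)=r_2(m)$ for odd $m$, the sequence $r_2(2c_i)$ is still strictly increasing, the even part is now exactly $2^1$, and $4\mid h_i$ so Theorem~\ref{theo:intro1.1} applies directly. With this tweak your argument and the paper's coincide.
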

Here, $r_2(n)$ denotes the number of representations of $n$ as a sum of two squares. We deduce Theorem~\ref{theo:JAK} from the following general result.
\begin{theorem}\label{theo:intro1.1}
\sloppy Let $\mathcal{H}^{*}=\{h_1,h_2,\ldots\}$ be admissible such that each $h_i$ is divisible by 4. Then there exists increasing sequences of natural numbers $a_j$ and $n_k$ such that $n_k+h_{a_j}$ is a sum of two squares for every $k\geq 1$ and $1\leq j\leq k.$
\end{theorem}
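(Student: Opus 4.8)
The plan is to build the sequences $n_k$ and $a_j$ by a nested, inductive construction driven by a Maynard--Tao type sieve adapted to sums of two squares. Fix the admissible set $\mathcal{H}^* = \{h_1, h_2, \ldots\}$ with each $h_i$ divisible by $4$. The key analytic input will be a weighted sieve: for a large parameter $N$ and a truncation length $k$, one forms the usual Maynard sum $\sum_{N \leq n < 2N} \bigl(\sum_i \mathds{1}_{b(n+h_i)} - \rho\bigr) w_n \geq 0$, where $b(\cdot)$ is the indicator of sums of two squares, $w_n = \bigl(\sum_{d_i \mid n+h_i} \lambda_{d_1,\ldots,d_k}\bigr)^2$ are the Maynard weights supported on squarefree $d = \prod d_i$ coprime to a fixed modulus, and $\rho$ is chosen so that the positivity of the bracket at some $n$ forces many of the $n+h_i$ to be simultaneously sums of two squares. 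The point is that for $b(\cdot)$ the ``level-of-distribution'' behaviour is far better than for primes: sums of two squares have positive density and satisfy equidistribution in arithmetic progressions to modulus up to $N^{1-\varepsilon}$, so the singular-series ratios $S_i/S$ governing the main term are bounded below by an absolute positive constant. Hence for each fixed $k$ one obtains, for all sufficiently large $N$, infinitely many $n \in [N, 2N)$ for which at least a positive proportion — in fact (by taking $N$ large) arbitrarily many — of the translates $n + h_i$ with $i$ in any prescribed window are sums of two squares.

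Next I would set up the diagonal/nesting argument. Suppose inductively we have chosen $a_1 < a_2 < \cdots < a_{k-1}$ and an infinite set $\mathcal{N}_{k-1}$ of admissible "base points" such that for every $n \in \mathcal{N}_{k-1}$ the numbers $n + h_{a_1}, \ldots, n + h_{a_{k-1}}$ are all sums of two squares. To pass to stage $k$: apply the sieve to the finite admissible set $\{h_{a_1}, \ldots, h_{a_{k-1}}\} \cup \{h_{a_{k-1}+1}, \ldots, h_{a_{k-1}+R}\}$ for a large $R$, restricting the outer sum over $n$ to a residue class that forces $n + h_{a_1}, \ldots, n + h_{a_{k-1}}$ to already lie in the sum-of-two-squares set (this is the mechanism that lets us ``carry forward'' the earlier constraints — here one exploits that the condition ``$m$ is a sum of two squares'' is implied by suitable congruence/multiplicativity conditions one can pin down in the sieve, or one sieves within the set where those particular translates are forced). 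Since the sieve produces at least one extra index whose translate is a sum of two squares, pick $a_k$ to be such an index, set $n_k$ to be a witness, and let $\mathcal{N}_k$ be the infinite subset of $n$ doing the job; choosing $n_k$ increasing is automatic since we may always pass to a tail $[N,2N)$ with $N$ as large as we like.

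The main obstacle — and the place where real work is needed — is exactly the ``carrying forward'' step: the sieve naturally handles divisibility conditions, but ``$m$ is a sum of two squares'' is not a congruence condition, so one cannot simply restrict $n$ to a residue class to guarantee that the previously-selected translates stay representable. The honest fix is to run a \emph{single} Maynard--Tao sieve over the growing set $\{h_{a_1},\ldots,h_{a_{k-1}}, h_{a_{k-1}+1},\ldots,h_{a_{k-1}+R}\}$ and supplement the lower-bound sieve for sums of two squares with a \emph{second moment} (variance) estimate — this is presumably the ``second moment estimate'' advertised in the abstract — which controls the number of $n$ for which \emph{too many} of the already-selected translates fail to be sums of two squares, so that after discarding a negligible set one retains an infinite set $\mathcal{N}_k \subseteq \mathcal{N}_{k-1}$ on which all of $n + h_{a_1}, \ldots, n+h_{a_k}$ are simultaneously sums of two squares. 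Making the second-moment bound strong enough to beat the main term from the sieve — uniformly as $k \to \infty$ and $R \to \infty$ — is the crux; everything else (admissibility bookkeeping, choosing $n_k$ and $a_j$ strictly increasing, verifying the divisibility-by-$4$ hypothesis is compatible with the local conditions) is routine.
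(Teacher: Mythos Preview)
Your proposal correctly names the two main ingredients --- a Maynard--Tao sieve and a second moment/variance estimate --- but the inductive scheme you describe has a structural gap, and the analytic idea that makes the second moment work uniformly in $k$ is absent.

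The inductive step asks the sieve at stage $k$ to force the \emph{specific} translates $n+h_{a_1},\ldots,n+h_{a_{k-1}}$ to all be sums of two squares, together with one more from a new window of length $R$. But with any Maynard-type weight on $k-1+R$ shifts, the weighted probability that a \emph{single} fixed translate $n+h_{a_j}$ lies in the target set is $\asymp (k+R)^{-1/2}$ (the sieve here is half-dimensional; in the paper's notation $L_{k;m}(F)/L_k(F)\asymp k^{-1/2}$), while the weighted variance of $\rho(n+h_{a_j})$ is of the same order $(k+R)^{-1/2}$, not $(k+R)^{-1}$. Chebyshev therefore cannot pin down a singleton bin, so you cannot ``carry forward'' specific previously chosen indices. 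The sentence about retaining $\mathcal{N}_k\subseteq\mathcal{N}_{k-1}$ is where this shows up: the set of $n$ produced by the stage-$k$ sieve has no reason to sit inside $\mathcal{N}_{k-1}$, and there is no mechanism to make it do so.

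The paper's remedy is twofold. First, it drops the nesting entirely and instead proves, for every $M$, that there are infinitely many $n$ such that each bin $B_1,\ldots,B_M$ (of fixed finite size) contains some $h$ with $n+h$ a sum of two squares; a pigeonhole argument on the finitely many possible choices in each bin then extracts consistent sequences $a_j,n_k$ a posteriori (Proposition~\ref{prop:pigeonhole}). Second --- and this is precisely the uniformity issue you flag as ``the crux'' without resolving --- the sieve weights are \emph{modified} to factorise across bins, with the $i$-th factor supported on a simplex of side $\beta_i=2^{-i}$. This allocates a fixed share of the sieve to bin $i$ independently of $M$, so the weighted mean of $\sum_{h\in B_i}\rho(n+h)$ scales like $\sqrt{k_i\beta_i}$ rather than $\sqrt{k_i/k}$; choosing the bin sizes $k_i>2^{7i}$ then makes the Chebyshev sum $\sum_i t_i^{-2}\,\mathrm{Var}_i$ convergent uniformly in $M$. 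Without this weight modification no choice of $R$ rescues your scheme.

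A smaller but genuine point: the second moment needs asymptotics for $\sum_n \rho(n+h)\rho(n+h')w(n)$, which are not available for the indicator $\mathds{1}_b$ you use. The paper works instead with Hooley's damped representation function $\rho(n)=t(n)r(n)$, for which the required two-point correlations can be computed.
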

\sloppy For example, Theorem~\ref{theo:intro1.1} applied to the admissible set \hbox{$\mathcal{H}^{*}=\{h_1,h_2,\ldots\}$} with elements $h_i=-(2\cdot 5^{i})^2,$ yields a solution to Theorem~\ref{theo:JAK}.

As mentioned above, Theorem~\ref{theo:JAK} allows us conclude results about quantum limits on flat tori. Let $(\lambda_j)_{j\geq 1}$ be a sequence of eigenvalues of the Laplace operator $\nabla$ on $\mathbb{T}^d$ such that $\lambda_j\rightarrow \infty,$ and let $\varphi_j$ be corresponding eigenfunctions with $\left\lVert\varphi_j\right\rVert_2=1$. If the sequence of probability measures $\mathrm{d}\mu_j=|\varphi_j|^2\mathrm{d}x$ has a weak-$*$ limit $\mathrm{d}v$, then we call $\mathrm{d}v$ a {\it quantum limit}. (Here $\mathrm{d}x$ is the normalised Riemannian volume.) 

It can be shown that all limits of such sequences $\mathrm{d}\mu_j$ are absolutely continuous with respect to the Lebesgue measure on $\mathbb{T}^d$ (cf.~\cite[Theorem 1.3]{Jak}), and so one can consider the Fourier expansion
\begin{equation}\label{eq:fourier1}
\mathrm{d}v = \sum_{\tau \in \mathbb{Z}^d} c_{\tau} e^{2\pi i \langle \tau, x \rangle}\mathrm{d}x.
\end{equation}
Among other things, Jakobson shows that in two dimensions all quantum limits are necessarily trigonometric polynomials (cf. \cite[Theorem 1.2]{Jak}). The same result isn't true for $d\geq 4$, and conjecturally not true for $d=3$ either (cf. \cite[Conjecture 8.2]{Jak} and the following discussion). With Theorem~\ref{theo:JAK}, we can now complete this aspect of the classification of quantum limits on flat tori.

\begin{theorem}\label{theo:QLmain}
There exists quantum limits on $\mathbb{T}^3$ that are not trigonometric polynomials.
\end{theorem}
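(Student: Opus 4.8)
The plan is to feed the sequences $(a_j)$ and $(M_k)$ produced by Theorem~\ref{theo:JAK} into an explicit family of eigenfunctions on $\mathbb{T}^3$ and then pass to a weak-$*$ limit that keeps infinitely many Fourier modes. We may assume $a_j\ge1$ for all $j$ (at most one term equals $0$, and it may be discarded). For each $k$ and each $1\le j\le k$, fix integers with $M_k-a_j^2=b_{j,k}^2+c_{j,k}^2$ and set $v_{j,k}^{\pm}=(\pm a_j,\,b_{j,k},\,c_{j,k})\in\mathbb{Z}^3$; then $|v_{j,k}^{\pm}|^2=M_k$, so
\[
\psi_{j,k}(x)=\tfrac{1}{\sqrt2}\Bigl(e^{2\pi i\langle v_{j,k}^{+},x\rangle}+e^{2\pi i\langle v_{j,k}^{-},x\rangle}\Bigr)
\]
is an $L^2(\mathbb{T}^3)$-normalised eigenfunction of $\nabla$ all of whose frequencies have squared length $M_k$, and $|\psi_{j,k}(x)|^2=1+\cos(4\pi a_j x_1)$. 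Since $(a_j)$ is strictly increasing with $a_j\ge1$, for each fixed $k$ the $2k$ vectors $v_{j,k}^{\pm}$ ($1\le j\le k$) are pairwise distinct, so the $\psi_{j,k}$ are mutually orthogonal. Fix weights $\alpha_j\neq0$ with $\sum_{j\ge1}|\alpha_j|^2=1$, put $\Phi_k=\sum_{j=1}^{k}\alpha_j\psi_{j,k}$ and $\varphi_k=\|\Phi_k\|_2^{-1}\Phi_k$; then $\varphi_k$ is an $L^2$-normalised eigenfunction whose eigenvalues grow with $M_k$ and hence tend to $\infty$.

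Next I would compute, for $\tau=(2a_j,0,0)$, the Fourier coefficient $\widehat{|\varphi_k|^2}(\tau)$ (here $\widehat{f}(\tau)=\int_{\mathbb{T}^3}f(x)e^{-2\pi i\langle\tau,x\rangle}\,\mathrm{d}x$). Expanding $|\Phi_k|^2=\sum_{j_1,j_2}\alpha_{j_1}\overline{\alpha_{j_2}}\,\psi_{j_1,k}\overline{\psi_{j_2,k}}$ and each product into exponentials, a contribution to the $\tau$-coefficient can arise only from signs $s_1,s_2$ with $v_{j_1,k}^{s_1}-v_{j_2,k}^{s_2}=(2a_j,0,0)$. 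Equality in the last two coordinates forces $b_{j_1,k}^2+c_{j_1,k}^2=b_{j_2,k}^2+c_{j_2,k}^2$, i.e.\ $M_k-a_{j_1}^2=M_k-a_{j_2}^2$, so $j_1=j_2$; equality in the first coordinate then forces $j_1=j_2=j$ and $(s_1,s_2)=(+,-)$. Consequently, for every $k\ge j$,
\[
\widehat{|\varphi_k|^2}\bigl((2a_j,0,0)\bigr)=\frac{|\alpha_j|^2}{2\sum_{i\le k}|\alpha_i|^2},
\]
which tends to $\tfrac12|\alpha_j|^2$ as $k\to\infty$.

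Then, since $\mathbb{T}^3$ is compact, the probability measures $\mathrm{d}\mu_k=|\varphi_k|^2\,\mathrm{d}x$ admit a weak-$*$ convergent subsequence, whose limit $\mathrm{d}v$ is a quantum limit (the eigenvalues along the subsequence still tend to $\infty$). By \cite[Theorem~1.3]{Jak} the measure $\mathrm{d}v$ is absolutely continuous and hence has a Fourier expansion as in~\eqref{eq:fourier1}, and testing weak-$*$ convergence against $x\mapsto e^{-2\pi i\langle\tau,x\rangle}$ identifies each $c_\tau$ as the limit (along the subsequence) of $\widehat{|\varphi_k|^2}(\tau)$. In particular $c_{(2a_j,0,0)}=\tfrac12|\alpha_j|^2\neq0$ for every $j\ge1$, and the frequencies $(2a_j,0,0)$ are pairwise distinct because the $a_j$ are. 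Thus $\mathrm{d}v$ has infinitely many non-zero Fourier coefficients, so it is not a trigonometric polynomial.

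The substantive input---producing a single value $M_k$ whose shifts $M_k-a_j^2$ are simultaneously sums of two squares for all $j\le k$, with $\{a_j\}$ infinite---is exactly Theorem~\ref{theo:JAK}, so the main obstacle in the present argument is just care with the set-up: the weights $\alpha_j$ must be fixed and nonzero rather than, say, uniform ($\alpha_j\equiv k^{-1/2}$ would send every target coefficient to $0$ and leave only $\mathrm{d}v=\mathrm{d}x$), while $\varphi_k$ is kept normalised; and one must check that the coefficient at $(2a_j,0,0)$ is not cancelled by cross terms $\psi_{j_1,k}\overline{\psi_{j_2,k}}$ with $j_1\neq j_2$, which the strict monotonicity of $(a_j)$ rules out. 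Only the existence statement of Theorem~\ref{theo:JAK} is needed here; properties (1)--(2) bear on the finer classification of quantum limits rather than on this statement.
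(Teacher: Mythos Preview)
Your proof is correct and follows essentially the same construction as the paper: both place frequencies at $(\pm a_j,b_{j,k},c_{j,k})$ on the sphere $|\xi|^2=M_k$, weight them by a fixed $\ell^2$ sequence, and verify that the Fourier coefficient at $(2a_j,0,0)$ survives in the limit by the same distinctness argument on the $a_j$. The only cosmetic differences are that the paper fixes the specific weights $\alpha_j=2^{-j/2}$ (with a $k$-dependent normalising factor) and asserts convergence of each $b_\tau(k)$ directly, whereas you allow general $(\alpha_j)\in\ell^2$ and pass to a weak-$*$ convergent subsequence; neither changes the substance.
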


As further consequences to Theorem~\ref{theo:JAK} we are able to show the following results for quantum limits whose Fourier expansions are described as in~(\ref{eq:fourier1}).

\begin{theorem}\label{theo:QL} Let $\epsilon>0.$ We have the following.
\begin{enumerate}[label=(\roman*)]
	\item For $d \geq 4$ there exists quantum limits $\mathrm{d}v$ on $\mathbb{T}^d$ with densities that are not in $l^{2-\epsilon}$ (i.e. for which $\sum_{\tau}|c_{\tau}|^{2-\epsilon}$ diverges).
	\item For $d\geq 5$ there exists quantum limits $\mathrm{d}v$ on $\mathbb{T}^d$ for which
\begin{equation*}
\limsup_{\rho\rightarrow \infty} \frac{\Sigma(\rho)}{\rho^{d-4-\epsilon}}=+\infty,
\end{equation*}
where $\Sigma(\rho)$ is defined as
\begin{equation}\label{eq:Sigma}
\Sigma(\rho) = \sum_{\substack{\tau \in \mathbb{Z}^d \\ |\tau| < \rho}}|c_{\tau}|.
\end{equation}
\end{enumerate}
\end{theorem}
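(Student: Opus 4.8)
The plan is to realise each quantum limit explicitly as a weak-$*$ limit of densities $|\varphi_{k}|^{2}\,\mathrm{d}x$ of $L^{2}$-normalised eigenfunctions on $\mathbb{T}^{d}$. Recall that an eigenfunction of eigenvalue $\lambda$ is a trigonometric polynomial $\varphi=\sum_{\xi}b_{\xi}e^{2\pi i\langle\xi,x\rangle}$ supported on lattice points of a single sphere $|\xi|^{2}=\lambda$, and that $\widehat{|\varphi|^{2}}(\tau)=\sum_{\xi-\eta=\tau}b_{\xi}\bar b_{\eta}$, summed over $\xi,\eta$ on the sphere. A frequency $\tau$ survives to the limit only if it is \emph{pinned}, that is, realised as a difference $\xi-\eta$ with a coefficient that does not escape to $0$; writing $w=\tfrac12(\xi+\eta)$, the conditions $\xi-\eta=\tau$, $|\xi|^{2}=|\eta|^{2}=\lambda$ force $w\in\tau^{\perp}$ and $|w|^{2}=\lambda-\tfrac14|\tau|^{2}$, so pinning $\tau=(2a,0,\dots,0)$ amounts exactly to writing $\lambda-a^{2}$ as a sum of $d-1$ squares. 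For $d=3$ this is the assertion supplied --- for infinitely many $a$ simultaneously on a common sphere $|\xi|^{2}=M_{k}$ --- by Theorem~\ref{theo:JAK}, and for larger $d$ one uses Theorem~\ref{theo:intro1.1} applied to admissible sets tailored to the frequencies one wishes to pin.

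For Theorem~\ref{theo:QLmain} I would take $d=3$ and the sequences $a_{j},M_{k}$ of Theorem~\ref{theo:JAK}, and on $|\xi|^{2}=M_{k}$ set $\varphi_{k}=\sum_{j\le k}\sum_{|\vec w|^{2}=M_{k}-a_{j}^{2}}\frac{c_{j}}{\sqrt{r_{2}(M_{k}-a_{j}^{2})}}\bigl(e^{2\pi i\langle(a_{j},\vec w),x\rangle}+e^{2\pi i\langle(-a_{j},\vec w),x\rangle}\bigr)$, with $(c_{j})_{j\ge1}$ a fixed positive square-summable sequence and the whole expression renormalised to unit $L^{2}$-norm (which only perturbs coefficients by $1+o(1)$). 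Since all coefficients are positive, a short case analysis of which pairs $(\xi,\eta)$ on the sphere differ by $(2a_{j},0,0)$ shows that there is no cancellation and that the factor $r_{2}(M_{k}-a_{j}^{2})$ cancels exactly, so $\widehat{|\varphi_{k}|^{2}}\bigl((2a_{j},0,0)\bigr)\to c_{j}^{2}$. Extracting a weak-$*$ limit $\mathrm{d}v$ (by compactness of the space of probability measures and a diagonal argument over the countably many frequencies) gives $c_{(2a_{j},0,0)}=c_{j}^{2}>0$ for all $j\ge1$; by \cite[Theorem 1.3]{Jak} the measure $\mathrm{d}v$ is absolutely continuous, and it has infinitely many nonzero Fourier coefficients, hence is not a trigonometric polynomial. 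Conditions (1)--(2) of Theorem~\ref{theo:JAK} are the structural properties that Jakobson's construction requires of the pinning data; in the present set-up they ensure that distinct levels never collide.

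For Theorem~\ref{theo:QL} I would run the same scheme on $\mathbb{T}^{d}$ with $d\ge4$, but pin many frequencies on each sphere: a difference $\tau=(2a,2b,0,\dots,0)$ is pinned as soon as $M_{k}-(a^{2}+b^{2})$ is a sum of two squares, and applying Theorem~\ref{theo:intro1.1} to a suitable admissible family $\{h_{i}\}$ with $4\mid h_{i}$ secures, for every $k$, one sphere on which a fixed, growing set of such $\tau$ is simultaneously pinned with $k$-independent, cancellation-free coefficients. Letting the weights decay as slowly as the normalisation permits and summing the resulting lower bounds for $|c_{\tau}|$ over these frequencies makes $\sum_{\tau}|c_{\tau}|^{2-\epsilon}$ diverge, giving part (i). For part (ii), when $d\ge5$ the lattice points on the relevant spheres equidistribute (via the circle method), so the pinned frequencies of magnitude $<\rho$ can be counted precisely: a sphere $|\xi|^{2}=\lambda$ with $\lambda\asymp\rho^{O(1)}$ carries $\asymp\lambda^{(d-2)/2}$ lattice points and yields $\gg\rho^{d-4}$ pinnable low frequencies once this count is balanced against the $L^{2}$-normalisation, whence $\limsup_{\rho\to\infty}\Sigma(\rho)/\rho^{d-4-\epsilon}=+\infty$; the exponent $d-4$ is the one forced by the sharp $L^{4}$ eigenfunction bound on $\mathbb{T}^{d}$.

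The main obstacle throughout is the passage to the limit: one must simultaneously keep $\|\varphi_{k}\|_{2}=1$, guarantee --- this being the arithmetic input, from Theorems~\ref{theo:JAK} and~\ref{theo:intro1.1} --- that each frequency of interest is a genuine lattice-point difference on the common sphere $|\xi|^{2}=M_{k}$ for all large $k$, and rule out both cancellation and leakage of Fourier mass between levels, so that $\widehat{|\varphi_{k}|^{2}}(\tau)$ converges, for the chosen $\tau$, to a strictly positive (for Theorem~\ref{theo:QLmain}) or quantitatively large (for Theorem~\ref{theo:QL}) limit. The cancellation-free $1/\sqrt{r}$ weighting is what decouples the limit from the fluctuations of $r_{2}(M_{k}-a_{j}^{2})$; executing the ``no leakage'' step is where the refinements (1)--(2) of Theorem~\ref{theo:JAK} are used, and for part (ii) of Theorem~\ref{theo:QL} one additionally needs quantitatively strong equidistribution of lattice points on spheres in $d\ge5$ variables together with lower bounds for $r_{d-1}$ on the integers $M_{k}-(a^{2}+b^{2})$.
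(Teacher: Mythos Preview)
Your framework (explicit eigenfunctions, weak-$*$ limits, pinning frequencies as lattice differences) matches the paper, but the construction for part~(i) has a real gap. If each target frequency $\tau=(2a,2b,0,\dots,0)$ is pinned by a single antipodal pair $\pm(a,b,w)$ with weight $c_{j}$, then $L^{2}$ normalisation forces $\sum_{j}c_{j}^{2}\asymp1$ and gives $|c_{\tau_{j}}|\asymp c_{j}^{2}$, so $\sum_{j}|c_{\tau_{j}}|^{2-\epsilon}\asymp\sum_{j}c_{j}^{4-2\epsilon}$. For $\epsilon<1$ the exponent $4-2\epsilon$ exceeds $2$, and hence this series \emph{converges} whenever $\sum c_{j}^{2}$ does: one pair per frequency can never make $\sum|c_{\tau}|^{2-\epsilon}$ diverge, no matter how slowly the weights decay. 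Invoking Theorem~\ref{theo:intro1.1} to pin more frequencies does not help, because it still produces only one pinned pair per bin.

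The paper's device is to exploit condition~(1) of Theorem~\ref{theo:JAK} by clustering many points on a small circle. On $\mathbb{T}^{4}$ one places, at level $j$, equal weight $(2^{j}r(a_{j}^{2}))^{-1/2}$ on all $r(a_{j}^{2})$ lattice points $(X,Y,b_{j}^{(k)},c_{j}^{(k)})$ with $X^{2}+Y^{2}=a_{j}^{2}$; this needs only $M_{k}-a_{j}^{2}=\Box+\Box$, already supplied by Theorem~\ref{theo:JAK}, so no further appeal to Theorem~\ref{theo:intro1.1} is required. The $r(a_{j}^{2})^{2}$ pairwise differences all have norm $\le2a_{j}$ and, by positivity of the coefficients, yield $\sum_{|\tau|\le2a_{j}}|c_{\tau}|^{2-\epsilon}\gg r(a_{j}^{2})^{\epsilon}/2^{j(2-\epsilon)}$, which is unbounded along a subsequence with $r(a_{j})\to\infty$ fast enough. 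The quadratic gain from clustering is the missing idea. For part~(ii) the same clustering in the first $d-2$ coordinates (not $d-1$) together with the elementary bound $r_{d-2}(a_{j}^{2})\gg a_{j}^{d-4}$ --- which for $d\in\{5,6\}$ relies on condition~(2) of Theorem~\ref{theo:JAK} rather than on circle-method equidistribution --- gives $\Sigma(2a_{j})\gg a_{j}^{d-4}/2^{j}$ directly.
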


The results contained in Theorem~\ref{theo:QL} improve upon various results found in~\cite{Jak}. Part~(i) was previously shown for $d\geq 5$, and has now been extended to the case $d=4$ where it is now optimal (cf. \cite[Theorem 1.4]{Jak}). Part (ii) improves on the weaker lower bound 
$$\limsup_{\rho\rightarrow \infty} \frac{\Sigma(\rho)}{\rho^{d-5-\epsilon}}=+\infty$$
which was shown for $d\geq 6$. The lower bound we prove is believed to be optimal for all $d\geq 5$ (cf.~\cite[Proposition 1.2]{Jak} and comments shortly after).

\begin{remark}
It is well-known that the eigenvalues of $\nabla$ on $\mathbb{T}^d$ are the numbers $4\pi^2 k$ for non-negative integers $k$, and they occur with multiplicity $r_d(k)$ (the number of representations of $k$ as the sum of $d$ squares). This means various constructions associated to quantum limits on flat tori can be translated to problems in number theory involving sums of squares.
\end{remark}

\begin{remark}
\sloppy Jakobson shows how Theorem~\ref{theo:JAK} follows from weak form of the prime \hbox{$k$-tuples} conjecture, essentially by using the fact primes $p\equiv 1\,\,(\text{mod}\,\,4)$ are the sum of two squares (cf. discussion at the end of~\cite[Section 8]{Jak}). We note that the weak form of the conjecture Jakobson uses is still far out of reach of current methods.
\end{remark}


\section{Outline of new sieve ideas}

In this section, let $\mathcal{A}\subseteq \mathbb{N}$ denote a set of arithmetic interest, which for our purposes is the set of numbers representable as a sum of two squares (but the following discussion holds more generally). We will denote random variables by boldfaced letters, for example $\mathbf{X}$. We will let $\mathbb{P}(\cdot)$ denote a probability measure and by $\mathbb{E}[\cdot]$ the expectation operator. 

\subsection{A model problem} Our aim is to prove Theorem~\ref{theo:intro1.1}. By a pigeonhole argument (see~Proposition~\ref{prop:pigeonhole}), it suffices to consider the following model problem.

\begin{problem}
Fix an admissible set $\mathcal{H}^{*}=\{h_1,h_2,\ldots\}$ of integers and a partition $\mathcal{H}^{*}=B_1\cup B_2\cup \ldots$ where each bin $B_i$ is a fixed, finite size $k_i$. Is it the case that for every $M\geq 1$ there exists elements $h_{a_1},\ldots,h_{a_M}$ and infinitely many integers $n$ such that $h_{a_j}\in B_j$ and $n+h_{a_j}\in\mathcal{A}$ for $1\leq j\leq M$?
\end{problem}

We realise the above set-up as the output of a sieving process. For notational purposes we order $B_{i}=\{h_{k_0+\ldots+k_{i-1}+1},\ldots,h_{k_0+\ldots+k_{i}}\}$ for $i\geq 1,$ with the convention that $k_0=0.$ Let $k=k_{0}+\ldots+k_M$ for some large $M$. Given $n\in[N,2N)$ for some large $N$, let $\mathbf{X}_i$ denote the random variable that counts the number of $h\in B_i$ such that $n+h\in\mathcal{A}$, and let $\mathbf{X}=\mathbf{X}_1+\ldots\mathbf{X}_{M}.$


%


The current method we use to detect primes in $k$-tuples is the GPY method. For general sets $\mathcal{A},$ the aim is to show the first moment inequality
\begin{equation}\label{eq:GPY}
S_{\mathcal{A}}=\sum_{N\leq n< 2N} \bigg(\sum_{i=1}^{k}\mathds{1}_{\mathcal{A}}(n+h_i)-m\bigg)w(n)>0
\end{equation}
\sloppy holds for some integer $m\geq 1$, where $\mathds{1}_{\mathcal{A}}$ denotes the indicator function of the set $\mathcal{A}$ and $w(n)$ are non-negative weights (cf.~\cite{GPY, Maynard, Poly}). If we normalise the weights to sum to 1, then this is saying ``if we choose $n$ randomly from the interval $[N,2N)$ with probability $w(n)$, then \hbox{$\mathbb{E}[\mathbf{X}]>m$}." From this we can deduce the existence of an $n\in[N,2N)$ for which $m+1$ of the translates $n+h_i\in\mathcal{A}$. We say such a translate has been ``accepted." Exactly which translates are accepted is unknown. This is a limitation of the first moment method.

It is clear that for our model problem, we require more information about which translates $n+h$ appear. Namely, we need to be obtaining an accepted translate from each of the $M$ bins $B_1,\ldots,B_M$ (recall $k=k_0+\ldots+k_M$). This presents two obvious difficulties.

 \begin{enumerate}
 	\item For any $1\leq i\leq k$ the probability of the event $n+h_i\in \mathcal{A}$ depends on $k$, and tends to 0 as $k\rightarrow \infty$. This would mean any bin of {\it fixed} size expects to get fewer and fewer elements as $k$ gets large. In particular, we cannot hope the hypotheses hold for every $M\geq 1.$
	\item Even in the situation where $\mathbb{E}[\mathbf{X}_i]>1$ holds for each $1\leq i\leq M$, we cannot conclude anything about $\mathbb{P}((\mathbf{X}_1>0)\cap\ldots \cap(\mathbf{X}_M>0))$ unless we input some information about the joint distribution of the bins.
 \end{enumerate}
 
 We are able to overcome these issues by modifying the sieve weights and using a second moment estimate.

\subsection{Choice of sieve weights} We solve the first problem by modifying the sieve weights to put more emphasis on the earlier bins. This way, we can guarantee that $\mathbb{P}(n+h\in\mathcal{A}|h\in B_i)=c_i$ where the constant $c_i$ depends solely on the bin. This also means that we can guarantee $\mathbb{E}[\mathbf{X}_i]$ is large for each $i$ (provided $k_i$ is large enough in terms of $c_i$). We will consider Maynard-Tao sieve weights with a fixed factorisation 
\begin{equation}
w(n) = \bigg(\sum_{\substack{d_1,\ldots,d_k \\ d_i|n+h_i}} \prod_{i=1}^{M}\lambda_{d_{k_0+\ldots+k_{i-1}+1},\ldots,d_{k_0+\ldots+k_{i}}}^{(i)}\bigg)^2,
\end{equation}
where
\begin{equation}
\lambda_{d_{k_0+\ldots+k_{i-1}+1},\ldots,d_{k_0+\ldots+k_i}}^{(i)} \approx \bigg(\prod_{j=k_0+\ldots+k_{i-1}+1}^{k_0+\ldots+k_i}\mu(d_{j})\bigg)f_i(d_{k_0+\ldots+k_{i-1}+1},\ldots,d_{k_0+\ldots+k_i}),
\end{equation}
and $f_i$ is a suitable smooth function supported on the simplex 
\begin{equation}
R_{B_i,\beta_i}=\{(x_{k_0+\ldots+k_{i-1}+1},\ldots,x_{k_0+\ldots+k_i})\in [0,1]^{k_i}: 0\leq \sum_{j=k_0+\ldots+k_{i-1}+1}^{k_0+\ldots+k_i}x_j \leq \beta_i\}.
\end{equation} 
Here $(\beta_i)_{i\geq 1}$ is a sequence of real numbers such that $\sum_{i=1}^{\infty}\beta_i \leq 1$ (cf. the sieve weights defined in~\hbox{\cite[Proposition 4.1]{Maynard}}). We will take $\beta_i=2^{-i},$ and in this instance one might say ``we have allocated 50\% of the sieve power to $B_1$."

\subsection{Concentration of measure} We can deal with the second problem by showing the random variables $\mathbf{X}_i$ exhibit ``enough" independence. This is precisely what concentration of measure arguments are used for. For example, an application of the union bound and Chebychev's inequality tells us that
\begin{equation}
\mathbb{P}(|\mathbf{X}_i-\mathbb{E}[\mathbf{X}_i]| < t_i \text{ for all } i) \geq 1 - \sum_{i=1}^{M} \frac{\mathbb{E}[\mathbf{X}_i-\mathbb{E}[\mathbf{X}_i]]^2}{t_i^2}
\end{equation}
where $t_i \geq 1$ are concentration parameters. Thus, if we can show the variances $\mathbb{E}[\mathbf{X}_i-\mathbb{E}[\mathbf{X}_i]]^2$ are small, then we should be able to show each random variable concentrates in a (small) interval about its mean with high probability. In particular, we should be able to show that we get (at least) one accepted translate coming from each bin after the sieving process. We  implement this analytically by using a second moment estimate (see~Proposition~\ref{prop:2ndmoment}).

\begin{remark}\label{rmk:10}
A similar second moment estimate was considered by Banks, Freiberg and Maynard in their paper~\cite{BFM}. They showed a partition result (for primes), where the bins are allowed to grow with $k$. A key aspect of our work is that the bin sizes are fixed. Moreover they only needed upper bounds of the correct order of magnitude for the sieve sums, whereas we require precise asymptotics.
\end{remark}

\subsection{Hooley's $\rho$ function.} In practice, utilising a second moment estimate requires an understanding of the two-point correlations
\begin{equation}\label{eq:2pt}
\sum_{N\leq n<2N} \rho_{\mathcal{A}}(n+h) \rho_{\mathcal{A}}(n+h')
\end{equation}
where $\rho_{\mathcal{A}}$ is a non-negative function supported on $\mathcal{A}.$ This means our methods are limited to cases in which estimates of the above type are known. In particular we cannot deal with the case of primes, as evaluating the above sum asymptotically with $\mathds{1}_{\mathbb{P}}$ or the \hbox{von Mangoldt} function $\Lambda$ (say) is equivalent to the twin prime conjecture.\footnote{This is precisely why the authors were limited to using upper bounds and not asymptotics in \cite{BFM} (cf.~Remark 2.1).}

One can do much better when working with sums of two squares. We cannot evaluate~(\ref{eq:2pt}) asymptotically using the indicator function, but we can if we work with the representation function $r_2(n)$ instead. Unfortunately $r_2(n)$ is too large for our purposes, and it proves necessary to consider a weighted version instead. In Hooley's work \cite{Hooley} on the distribution of numbers representable as the sum of two squares, he considers a weighted representation function $\rho(n)=t(n)r_2(n)$ where 
\begin{equation}\label{eq:HooleysRho}
t(n) = t_{N,\theta_1}(n) = \sum_{\substack{a|n,\,\,a\leq v \\ p|a\Rightarrow p\equiv 1\,\,(\text{mod}\,\,4)}}\frac{\mu(a)}{g_2(a)}\bigg(1-\frac{\log{a}}{\log{v}}\bigg),\,\,\,\,\,(v=N^{\theta_1})
\end{equation}
and $\theta_1$ is a suitably small, fixed constant (for example Hooley takes $\theta_1=1/20$). Here $g_2(p)$ is the multiplicative function defined on primes by
\begin{equation}\label{eq:psi}
g_2(p) =
\begin{cases}
2-\frac{1}{p}\,\,&\text{if $p\equiv 1\,\,(\text{mod}\,\,4),$}\\
\frac{1}{p}\,\,&\text{if $p\equiv 3\,\,(\text{mod}\,\,4).$}
\end{cases} 
\end{equation}
The $t(n)$ factor acts to dampen down the oscillations due to $r_2(n).$ Thus $\rho(n)$ acts a proxy for the indicator function $\mathds{1}_{n=\Box+\Box}$ and moreover asymptotics for (\ref{eq:2pt}) are available for $\rho(n).$ This is the function we will be working with. 

\subsection{Outline of the paper.} In Section~\ref{section:QLproofs} we deduce the results about quantum limits contained in Theorem's~\ref{theo:QLmain} and \ref{theo:QL} from Theorem~\ref{theo:JAK}. In Section~\ref{section:APs} we state a few preliminary lemmas that will be needed in the sieve calculations. We defer the proofs of these results to the Appendix. In Section~\ref{section:sieveresults} we state our main sieve results, and from them we deduce Theorem~\ref{theo:intro1.1}. We isolate a key lemma (see Lemma~\ref{lemma:generalsieve}) from which all of our sieve estimates follow. Section~\ref{section:sievetechnical} and Section~\ref{section:sieveproofs} are dedicated to proving this lemma. 

\begin{ack}
The author would like to thank James Maynard for introducing this problem to them in the first instance, and for being available for many helpful conversations thereafter. We would also like to thank the anonymous referee for a careful and thorough reading of the paper. The author is funded by an EPSRC Studentship.
\end{ack}


\section{Proofs of quantum limit results}\label{section:QLproofs}
In this section we deduce the results of Theorem~\ref{theo:QLmain} and Theorem~\ref{theo:QL} from Theorem~\ref{theo:JAK}. Following \cite{Jak}, we note that $\varphi_k$ is an eigenfunction of the Laplacian on $\mathbb{T}^d$ with eigenvalue $\lambda_k = 4\pi^2 n_k$ for some $n_k\in\mathbb{N}$ if and only if its Fourier expansion is of the form
\begin{equation}\label{eq:fourier2}
\varphi_k(x) = \sum_{\substack{\xi\in\mathbb{Z}^d \\ |\xi|^2=n_k}}a_{\xi}e^{2\pi i \langle \xi,x \rangle},
\end{equation}
for $a_{\xi}\in\mathbb{C}.$ Moreover $\left\lVert \varphi_k \right\rVert_2=1$ if and only if $\sum_{\xi}|a_{\xi}|^2=1$. It follows that 
\begin{align}\nonumber
|\varphi_k(x)|^2 &= \sum_{\substack{\tau\in\mathbb{Z}^d }}b_{\tau}(k)e^{2\pi i \langle \tau,x \rangle},\\ \label{eq:fourier3}
b_{\tau}(k) &= \sum_{\substack{\xi-\eta=\tau \\ |\xi|^2=|\eta|^2=n_k}}a_{\xi}\overline{a_{\eta}}.
\end{align}
Let $\mathrm{d}v$ be a quantum limit on $\mathbb{T}^d$ with Fourier expansion as in (\ref{eq:fourier1}). By $|\varphi_k|^2\mathrm{d}x \rightarrow \mathrm{d}v$ weak-$*$ as $k\rightarrow \infty$ we mean that for every $\tau\in \mathbb{Z}^d$ we have $
c_{\tau} = \lim_{k\rightarrow \infty} b_{\tau}(k).$ 

Fix $a_1<a_2<\ldots$ and $M_1<M_2<\ldots$ as in the statement of Theorem \ref{theo:JAK}, and let $b_j^{(k)},c_{j}^{(k)}\in\mathbb{Z}$ be such 
\begin{equation}
M_k = a_j^2 + (b_{j}^{(k)})^2 + (c_{j}^{(k)})^2,\,\,\,\,\,\,(1\leq j\leq k).
\end{equation}
Let $0<\epsilon<2$ and let $F=F_{\epsilon}:\mathbb{N}\rightarrow \mathbb{N}$ be a rapidly increasing function whose rate of growth will be specified later. As we are assuming both $a_i\rightarrow \infty$ and $r(a_i)\rightarrow \infty$, by passing to a subsequence if necessary (and relabelling the indices of the sequence $M_k$), we may suppose $a_i,r(a_i) \gg F(i)$. 

We will require information about the number of integer points on the surface of the $d$-dimensional sphere. For this we recall the following results: writing $n=2^km$ and letting $\sigma(n)=\sum_{d|n}d$ denote the sum-of-divisors function, we have the identities
\begin{align*}
r_3(n^2) &= 6 \prod_{p^a||m}(\sigma(p^{a})-(-1)^{\frac{p-1}{2}}\sigma(p^{a-1})), \\
r_4(n^2) &= 24 \sigma(m^2), \\
r_d(n^2) &= C_d(n^2) n^{d-2}\,\,\,\,\text{ for }\,\,\,\, d\geq 5.
\end{align*}
Here $C_d(n^2)$ is a singular series which satisfies $C_d(n^2) \asymp_d 1.$ 

We prove each statement similarly - in each case we consider a suitable sequence of $L^2$-normalised eigenfunctions with eigenvalues $\lambda_k = 4\pi^2 M_k$ and show that the limit has the desired property. 
\\ 
\begin{proof}[Proof of (Theorem~\ref{theo:JAK} $\Rightarrow$ Theorem~\ref{theo:QLmain})]

Consider the sequence of $L^{2}$-normalised eigenfunctions on $\mathbb{T}^3$ that arise by choosing coefficients
$$
a_{\xi} =
\begin{cases}
\sqrt{\frac{2^{k}}{2^k-1}}\cdot \frac{1}{2^{(j+1)/2}}\,\,\,&\text{if $\xi=(\pm a_j, b_j^{(k)},c_j^{(k)})$ for some $j,$} \\
0\,\,\,&\text{otherwise.}
\end{cases}
$$
Fix $i\geq 1.$ With this choice, for any $k\geq i$ we obtain 
\begin{equation*}
b_{(2a_i,0,0)}(k) = \sum_{\substack{\xi-\eta = (2a_i,0,0) \\ |\xi|^2=|\eta|^2=M_k}}a_{\xi}\overline{a_{\eta}} = \frac{2^k}{2^k-1}\cdot \frac{1}{2^{i+1}},
\end{equation*}
because the $a_i$ are distinct and so the only contribution to the sum comes from $\xi=(a_i,b_i^{(k)},c_i^{(k)})$ and $\eta=(-a_i,b_i^{(k)},c_i^{(k)})$. Hence 
\begin{equation*}
c_{(2a_i,0,0)} = \lim_{k\rightarrow \infty} b_{(2a_i,0,0)}(k) = \frac{1}{2^{i+1}} > 0,
\end{equation*}
which proves the theorem.
\end{proof}

\begin{proof}[Proof of (Theorem~\ref{theo:JAK} $\Rightarrow$ Theorem~\ref{theo:QL})]
It suffices to prove part (i) for $d=4,$ by identifying the eigenfunctions on $\mathbb{T}^d$ with the eigenfunctions on $\mathbb{T}^{d+l}$ all of whose non-zero frequencies lie in the subspace $\{(x_1,\ldots,x_{d+l}):x_{d+1}=\ldots=x_{d+l}=0\}\subseteq \mathbb{Z}^{d+l}$. 

Consider the sequence of $L^{2}$-normalised eigenvectors on $\mathbb{T}^4$ that arise by choosing
$$
a_{\xi} =
\begin{cases}
\sqrt{\frac{2^{k}}{2^k-1}}\cdot \frac{1}{(2^jr(a_j^2))^{1/2}}\,\,\,&\text{if $\xi=(X,Y,b_j^{(k)},c_j^{(k)})$ for some $j$ and $X^2+Y^2=a_j^2$,} \\
0\,\,\,&\text{otherwise.}
\end{cases} 
$$
Fix $i$ and suppose $k\geq i$. Given two non-zero coefficients $a_{\xi},a_{\xi'}$, corresponding to vectors of the form $\xi=(X,Y,b_i^{(k)},c_i^{(k)})$ and $\xi'=(X',Y',b_i^{(k)},c_i^{(k)}),$ we see the difference vector is 
\begin{equation*}
\xi-\xi' = (X-X',Y-Y',0,0),
\end{equation*}
and the norm of this vector is $\leq 2a_i$ by the triangle inequality. From (\ref{eq:fourier3}), it follows that if we sum $b_{\tau}(k)$ over all $|\tau|\leq 2a_i$ then we pick up all such differences. There are $r(a_i^2)^2$ of them, leading to 
\begin{equation*}
\sum_{\substack{\tau \in \mathbb{Z}^d \\ |\tau| \leq 2a_i}}|b_{\tau}(k)|^{2-\epsilon} \geq \bigg(\frac{2^k}{2^k-1}\bigg)^{2-\epsilon} \cdot \frac{(2^{i}r(a_i^2))^{\epsilon}}{4^{i}}.
\end{equation*}
Taking the limit as $k\rightarrow \infty$ we conclude that 
\begin{equation*}
\sum_{\substack{\tau \in \mathbb{Z}^d \\ |\tau| \leq 2a_i}}|c_{\tau}|^{2-\epsilon} \geq \frac{(2^ir(a_i^2))^{\epsilon}}{4^i}  \geq \frac{(2^ir(a_i))^{\epsilon}}{4^i}   \gg \frac{(2^iF(i))^{\epsilon}}{4^i}.
\end{equation*}
Now we can choose $F=F_{\epsilon}$ so that the expression on the right hand side is unbounded as $i\rightarrow \infty.$ It follows that $\sum_{\tau}|c_{\tau}|^{2-\epsilon}$ doesn't converge, proving part~(i).

For part~(ii), fix $d\geq 5.$ We proceed as in part (i), except this time because $d\geq 5$ we have the lower bound $r_{d-2}(a_i^2) \gg_{d} a_i^{d-4}$. We remark that to obtain this bound for $d\in \{5,6\}$ we are using property (2) given by Theorem~\ref{theo:JAK}. For $d\geq 7$ the bound holds without this extra assumption on our sequence. 

Now consider the sequence of eigenvectors on $\mathbb{T}^d$ with densities
$$
a_{\xi} =
\begin{cases}
\sqrt{\frac{2^{k}}{2^k-1}}\cdot \frac{1}{(2^jr_{d-2}(a_j^2))^{1/2}}\,\,\,&\parbox[t]{.6\textwidth}{if $\xi=(X_1,\ldots,X_{d-2},b_j^{(k)},c_j^{(k)})$ for some $j$ 
and $X_1^2+\ldots X_{d-2}^2=a_j^2$,} \\
0\,\,\,&\text{otherwise.}
\end{cases} 
$$
Fix $i$ and suppose $k\geq i$. As above we can conclude
\begin{equation*}
\sum_{\substack{\tau \in \mathbb{Z}^d \\ |\tau| \leq 2a_i}}|b_{\tau}(k)| \geq \bigg(\frac{2^k}{2^k-1}\bigg) \cdot \frac{r_{d-2}(a_i^2)}{2^{i}}.
\end{equation*}
Taking the limit as $k\rightarrow \infty$ we conclude that 
\begin{equation*}
\sum_{\substack{\tau \in \mathbb{Z}^d \\ |\tau| \leq 2a_i}}|c_{\tau}| \geq \frac{r_{d-2}(a_i^2)}{2^i} \gg_{d} \frac{a_{i}^{d-4}}{2^i}.
\end{equation*}
It follows that
\begin{equation*}
\frac{\sum_{\substack{|\tau| \leq 2a_i}}|c_{\tau}|}{(2a_i)^{d-4-\epsilon}} \gg_{d} \frac{(2a_{i})^{\epsilon}}{2^i} \gg_d \frac{(2F(i))^{\epsilon}}{2^i}.
\end{equation*}
Choosing $F=F_{\epsilon}$ appropriately and letting $i\rightarrow \infty,$ we see that for this choice of quantum limit we have
\begin{equation*}
\limsup_{\rho\rightarrow \infty} \frac{\Sigma(\rho)}{\rho^{d-4-\epsilon}}=+\infty,
\end{equation*}
where $\Sigma(\rho)$ is defined as in (\ref{eq:Sigma}). This proves part (ii).
\end{proof}

\section{Notation} We will use both Landau and Vinogradov asymptotic notation throughout the paper. $N$ will denote a large integer, and all asymptotic notation is to be understood as referring to the limit as $N\rightarrow \infty.$ Any dependencies of the implied constants on other parameters $A$ will be denoted by a subscript, for example $X\ll_{A} Y$ or $X=O_{A}(Y),$ unless stated otherwise. We let $\epsilon$ denote a small positive constant, and we adopt the convention it is allowed to change at each occurrence, and even within a line.

We will denote the non-trivial Dirichlet character $(\text{mod}\,\,4)$ by $\chi_4,$ and we may omit the subscript and simply write $\chi$. As usual, we let $\varphi(n)$ denote the Euler-Totient function, $\tau_r(n)$ denote the number of ways of writing $n$ as the product of $r$ natural numbers, $\mu(n)$ denote the M\"obius function, and $r_d(n)$ denote the number of representations of $n$ as the sum of $d$ squares. For the rest of the paper we will write $r(n)$ when $d=2.$ For integers $a,b$ we let $(a,b)$ denote their highest common factor, and $[a,b]$ denote their lowest common multiple.

We define the Ramanujan-Landau constant
\begin{equation}\label{eq:A}
A= \frac{1}{\sqrt{2}}\prod_{p\equiv 3\,\,(\text{mod}\,\,4)}\bigg(1-\frac{1}{p^2}\bigg)^{-\frac{1}{2}}
\end{equation}
which will appear in many of our results.


\section{Preliminaries}\label{section:APs}

In this section, we formalise some of the notions discussed in Section~2, and state a few key estimates that will be required later in the sieve calculations.

\subsection{A pigeonhole argument} The following proposition allows us to go from the set-up in Theorem~\ref{theo:intro1.1} to the model problem discussed in Section~2.

\begin{prop}[Pigeonhole argument for infinite bin set-up]\label{prop:pigeonhole}
\sloppy Fix $\mathcal{A}\subseteq \mathbb{N}$ and a set $\mathcal{H}^{*}=\{h_1,h_2,\ldots\}$ of integers. Suppose that there exists a partition $\mathcal{H}^{*}=B_1\cup B_2\cup\ldots$ where each bin $B_i$ is a fixed, finite size, such that for every $M\geq1,$ there exists infinitely many $n$ and $M$ translates $n+h_{i,M}\in \mathcal{A}$ with $h_{i,M}\in B_i$ for $1\leq i \leq M.$ Then there exists increasing sequences $a_j$ and $n_k$ such that for every $k\geq 1$ we have $n_k+h_{a_j} \in \mathcal{A}$ for $1\leq j\leq k$ and moreover $h_{a_j}\in B_j$ for all $j$. 
\end{prop}
\begin{proof}
With the above set-up, obtain translates $n+h_{i,M}\in \mathcal{A}$ with $h_{i,M}\in B_i$ for $1\leq i \leq M$ for each $M\geq 1.$  Record this process in the following infinite table:
\begin{center}
\begin{tabular}{l |c| c |c |c |c|c r}
$B_1$ & $B_2$ & $B_3$ & $\ldots$ & $B_M$ & $B_{M+1}$ & $\ldots$ \\
\hline
$h_{1,1}$ & $$ &$$ & $\ldots$ & $$ &  \\
$h_{1,2}$ & $h_{2,2}$ &$$ & $\ldots$ & $$ &$$    \\
$h_{1,3}$ & $h_{2,3}$ &$h_{3,3}$ & $\ldots$ & $$ &$$    \\
$\vdots$ & $\vdots$ &$\vdots$ &$\vdots$ &$$ &$$  \\
$h_{1,M}$ & $h_{2,M}$ &$h_{3,M}$ &$\ldots$ &$h_{M,M}$ &$$   \\
$h_{1,M+1}$ & $h_{2,M+1}$ &$h_{3,M+1}$ &$\ldots$ &$h_{M,M+1}$ &$h_{M+1,M+1}$   \\
$\vdots$ & $\vdots$ &$\vdots$ &$\ldots$ &$\vdots$ &$\vdots$ &$\vdots$  
\end{tabular}
\end{center}
Look at the first column. By the pigeonhole principle, since $B_1$ is finite, there must exist an element $h_{a_1}\in B_1$ which appears infinitely many times. Choose the smallest such $h_{a_1},$ and choose any $n_1\in \mathbb{N}$ for which $n_1+h_{a_1}\in \mathcal{A}$. Now erase all the rows that do not start with $h_{a_1},$ and look at the remaining (infinite) table. Again, since $B_2$ is finite, some element $h_{a_2}\in B_2$ must occur infinitely many times in the second column. Choose the smallest such $h_{a_2},$ and choose any $n_2>n_1$ such that $n_2+h_{a_2}\in \mathcal{A}$ (which we can do because there are infinitely many such $n_2$). By construction this $n_2$ will be such that $n_2+h_{a_1}\in \mathcal{A}.$ Now erase all rows that don't start with $h_{a_1},h_{a_2},$ and repeat this process for $B_3,$ and so on. We will end up with increasing sequences $a_j$ and $n_k$ which by construction satisfy the required conditions.
\end{proof}

\subsection{A second moment estimate} As discussed in Section~2, our work will require input about the joint distribution of the bins, which we will achieve via concentration of measure arguments. The following second moment estimate will suffice for our purposes.

\begin{prop}[Second moment estimate]\label{prop:2ndmoment}
Fix $\mathcal{A}\subset \mathbb{N}$ and $\mathcal{H}=\{h_1,\ldots,h_k\}$ a set of integers. Suppose we have a partition $\mathcal{H}=B_1\cup\ldots\cup B_M.$ Let $\mu_i, t_i\geq 1$ be real numbers for $1\leq i\leq M.$ Let $\rho_{\mathcal{A}}$ be a non-negative function supported on $\mathcal{A}$ and $w(n)$ be non-negative weights for each integer $n$. If 
\begin{equation}\label{eq:2nd}
\sum_{\substack{N\leq n<2N}}\bigg[\min_{j=1,\ldots,M}\frac{\mu_j^2}{t_j^2}-\sum_{i=1}^{M}\bigg(\frac{\sum_{h\in B_i}\rho_{\mathcal{A}}(n+h)-\mu_i}{t_i}\bigg)^2\bigg]w(n)>0,
\end{equation}
then there exists an $n\in [N,2N)$ and elements $h_{a_i}\in B_i$ such that $n+h_{a_i}\in \mathcal{A}$ for $1\leq i\leq M.$ 
\end{prop}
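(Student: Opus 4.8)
The plan is to argue by contradiction. Suppose no such $n$ and $(h_{a_i})$ exist; that is, for every $n\in[N,2N)$ there is at least one bin $B_i$ with no accepted translate, meaning $\sum_{h\in B_i}\rho_{\mathcal{A}}(n+h)=0$ for that $i$ (using that $\rho_{\mathcal{A}}$ is non-negative and supported on $\mathcal{A}$, so the sum vanishes precisely when $n+h\notin\mathcal{A}$ for all $h\in B_i$). Fix such an $n$ and such an index $j=j(n)$. First I would bound the bracketed quantity in (\ref{eq:2nd}) from above for this $n$. The $i=j$ term of the sum $\sum_{i=1}^M\big(\frac{\sum_{h\in B_i}\rho_{\mathcal{A}}(n+h)-\mu_i}{t_i}\big)^2$ equals $\mu_j^2/t_j^2$, since the inner sum is zero. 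All other terms in the sum are non-negative, hence the whole sum is $\geq \mu_j^2/t_j^2 \geq \min_{l}\mu_l^2/t_l^2$. Therefore the bracket
\[
\min_{l=1,\ldots,M}\frac{\mu_l^2}{t_l^2}-\sum_{i=1}^{M}\bigg(\frac{\sum_{h\in B_i}\rho_{\mathcal{A}}(n+h)-\mu_i}{t_i}\bigg)^2 \leq 0
\]
for \emph{every} $n\in[N,2N)$.

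Since the weights $w(n)$ are non-negative, multiplying this pointwise inequality by $w(n)$ and summing over $N\leq n<2N$ gives
\[
\sum_{N\leq n<2N}\bigg[\min_{j=1,\ldots,M}\frac{\mu_j^2}{t_j^2}-\sum_{i=1}^{M}\bigg(\frac{\sum_{h\in B_i}\rho_{\mathcal{A}}(n+h)-\mu_i}{t_i}\bigg)^2\bigg]w(n)\leq 0,
\]
which directly contradicts the hypothesis (\ref{eq:2nd}) that this quantity is strictly positive. Hence there must exist some $n\in[N,2N)$ for which the bracket is positive; for that $n$ we have $\sum_{i=1}^M\big(\frac{\sum_{h\in B_i}\rho_{\mathcal{A}}(n+h)-\mu_i}{t_i}\big)^2 < \min_l \mu_l^2/t_l^2 \leq \mu_i^2/t_i^2$ for each $i$, so in particular each individual term satisfies $\big(\sum_{h\in B_i}\rho_{\mathcal{A}}(n+h)-\mu_i\big)^2 < \mu_i^2$, forcing $\sum_{h\in B_i}\rho_{\mathcal{A}}(n+h)>0$. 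Since $\rho_{\mathcal{A}}$ is supported on $\mathcal{A}$, there is at least one $h_{a_i}\in B_i$ with $\rho_{\mathcal{A}}(n+h_{a_i})\neq 0$, hence $n+h_{a_i}\in\mathcal{A}$, for every $1\leq i\leq M$.

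There is essentially no obstacle here — this is a soft deduction (a packaged union bound / Chebyshev-type argument), and the entire content is the bookkeeping that a single vanishing bin already exhausts the $\min$ budget. The one point to be careful about is the direction of the inequality $\big(\sum_{h\in B_i}\rho_{\mathcal{A}}(n+h)-\mu_i\big)^2<\mu_i^2$ actually ruling out the inner sum being zero: since $\mu_i\geq 1>0$, the value $0$ would give $(0-\mu_i)^2=\mu_i^2$, which is not strictly less than $\mu_i^2$, so $0$ is indeed excluded. All the genuine difficulty (verifying that (\ref{eq:2nd}) holds for the Hooley-weighted $\rho$ and the Maynard--Tao weights, with suitable $\mu_i$ and $t_i$) is deferred to the later sieve estimates and is not part of this proposition.
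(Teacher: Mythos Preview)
Your proof is correct and follows essentially the same argument as the paper's: from positivity of the weighted sum deduce an $n$ for which the bracket is strictly positive, and then observe that a bin with no accepted translate would already contribute $\mu_i^2/t_i^2$ to the sum of squares, contradicting the strict inequality. The paper's version is just more terse; your added remark that $\mu_i\geq 1>0$ is needed to exclude the boundary case is a nice clarification.
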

\begin{proof}
By positivity we deduce the existence of an $n\in [N,2N)$ such that 
\begin{equation*}\label{eq:2ndmoment}
\sum_{i=1}^{M}\bigg(\frac{\sum_{h\in B_i}\rho_{\mathcal{A}}(n+h)-\mu_i}{t_i}\bigg)^2<\min_{j=1,\ldots,M}\frac{\mu_j^2}{t_j^2}.
\end{equation*}
If $n+h\notin \mathcal{A}$ for all $h\in B_i,$ then by assumption on the support of $\rho_{\mathcal{A}}$ the left hand side of the above expression is $\geq \mu_i^2/t_i^2,$ a contradiction. 
\end{proof}

Thus, if the second moment estimate (\ref{eq:2nd}) holds for all $M\geq 1$ and sufficiently large $N$, then we are in a situation where the hypotheses of Proposition~\ref{prop:pigeonhole} are satisfied.

\subsection{Estimates in arithmetic progressions} We require an understanding of how $\rho(n)$ and $\rho(n)\rho(n+h)$ behaves in arithmetic progressions for our sieve calculations. Essentially this reduces down to understanding the corresponding sums for $r(n)$ and $r(n)r(n+h),$ where the estimates we need are known with power-saving error terms. This means that the error terms in the sieve calculations can be bounded trivially (cf. with the case of primes~\cite{Maynard, GPY}, where we have to use equi-distribution results such as the Bombieri-Vinogradov theorem to bound the error terms that arise). 

We have the following lemmas. We note that the functions $g_1,\ldots,g_7$ defined in this section will be used frequently throughout the rest of the paper.

\begin{lemma}\label{lemma:1} Suppose $(a,q)=(d,q)=1$ where $d,q$ are square-free and odd, of size $\ll N^{O(1)}.$ Then we have
\begin{equation*}
\sum_{\substack{n\leq N\\ n\equiv a\,\,(\text{mod}\,\,q) \\ n\equiv 1\,\,(\text{mod}\,\,4) \\ d|n}}r(n) = \frac{g_1(q)g_2(d)}{2qd}\pi N+R_1(N;d,q),
\end{equation*}
where $g_2$ is defined as in (\ref{eq:psi}), $g_1$ is the multiplicative function defined on primes by $g_1(p)=1-\chi(p)/p,$ and 
\begin{equation*}
R_1(N;d,q)\ll_{\epsilon} ((qd)^{\frac{1}{2}}+N^{\frac{1}{3}})d^{\frac{1}{2}}N^{\epsilon}
\end{equation*}
\end{lemma}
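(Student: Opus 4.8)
\textbf{Proof plan for Lemma~\ref{lemma:1}.} The plan is to reduce the sum to a classical lattice-point count in arithmetic progressions. First I would use the identity $r(n) = 4\sum_{d\mid n}\chi(d)$, valid for $n\geq 1$, to rewrite
\begin{equation*}
\sum_{\substack{n\leq N\\ n\equiv a\,(4q)\,\text{compat.} \\ d\mid n}}r(n) = 4\sum_{\substack{e\geq 1}}\chi(e)\,\#\Big\{m\leq N/e:\ em\equiv a\ (\text{mod}\ q),\ em\equiv 1\ (\text{mod}\ 4),\ d\mid em\Big\}.
\end{equation*}
Here I am using the Chinese Remainder Theorem to combine the congruences mod $q$, mod $4$, and the divisibility by $d$ into a single congruence condition mod $[4,q,d]=4qd$ (since $q,d$ are odd and square-free, and the hypotheses $(a,q)=(d,q)=1$ guarantee the system is solvable precisely when $(e, qd)$ divides the appropriate residues — one has to track the coprimality conditions on $e$ carefully). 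The inner count is then, up to a bounded error, $\frac{N}{e\cdot 4qd/(e,4qd)\cdot(\ldots)}$, i.e.\ a main term linear in $N$ plus an error $O(1)$ per value of $e$.

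Next I would split the $e$-sum at a parameter, say $e\leq \sqrt{N}$ versus $e>\sqrt{N}$ (the standard Dirichlet hyperbola trick, though for a one-sided divisor sum truncating at $\sqrt{N}$ and treating the complementary range by swapping the roles of divisor and cofactor suffices). In the range $e\leq \sqrt{N}$ the $O(1)$ error per $e$ contributes $O(\sqrt{N})$ total, and after multiplying by the $d^{1/2}$ that appears from the divisibility constraint interacting with $d$ one gets a contribution of size $\ll d^{1/2}N^{1/2+\epsilon}$; the $(qd)^{1/2}d^{1/2}N^\epsilon$ term comes from the main-term fractional-part errors accumulated over the moduli up to $4qd$. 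Summing the main terms $\frac{\chi(e)N}{4qd\, e/(e,\ldots)}$ over $e$ and extending the resulting Dirichlet series to infinity (the tail being absorbed into the error), the Euler product $\sum_e \chi(e)/e$-type series evaluates to $L(1,\chi) = \pi/4$ twisted by local factors at primes dividing $qd$; these local corrections are exactly what produce the multiplicative functions $g_1(q) = \prod_{p\mid q}(1-\chi(p)/p)$ and $g_2(d)$ as defined in~(\ref{eq:psi}). The factor $4\cdot L(1,\chi) = 4\cdot\pi/4 = \pi$ gives the stated $\pi N/(2qd)$ after accounting for the factor $2$ from the mod $4$ restriction.

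The step I expect to be the main obstacle is the careful bookkeeping of the local factors at primes $p\mid d$ versus $p\mid q$: the divisibility condition $d\mid n$ forces $p\mid n$ for each $p\mid d$, which changes the local density at those primes in a way that depends on whether $p\equiv 1$ or $3\pmod 4$ — this is precisely the dichotomy encoded in $g_2(p)$, where the $p\equiv 3$ case gives $1/p$ (forcing high $p$-adic valuation for $n$ to remain a sum of two squares) and the $p\equiv 1$ case gives $2-1/p$. Getting the constant $\pi/2$ exactly right, rather than off by a power of $2$ or a factor from the character, requires matching the computation against the known asymptotic $\sum_{n\leq N} r(n)\sim \pi N$ in the trivial case $d=q=1$ and then tracking how each constraint modifies it. The error term analysis itself is routine once the main term is pinned down: it is a standard divisor-switching argument and the power-saving exponents $1/3$ and $1/2$ are exactly those one expects from the Dirichlet hyperbola method applied to $r(n)$ in progressions of modulus up to $N^{O(1)}$. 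Since the excerpt states the proof is deferred to the Appendix, I would present only this reduction here and relegate the verification of the error bounds to that appendix.
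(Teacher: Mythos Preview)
Your approach is genuinely different from the paper's, and it has a real gap in the error term.

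The paper does not compute anything from scratch: it quotes Tolev's theorem on the circle problem in progressions, which gives
\[
\sum_{\substack{n\leq N\\ n\equiv \Delta\,(Q)}}r(n)=\frac{\eta(Q,\Delta)}{Q^2}\pi N+O\big((Q^{1/2}+N^{1/3})(\Delta,Q)^{1/2}\tau^4(Q)\log^4 N\big),
\]
and Plaksin's formula, which gives the same main term in the shape $A(Q,\Delta)\pi N/Q$ with $A(Q,\Delta)$ written explicitly in terms of Ramanujan sums. Equating the two main terms and specialising to $Q=4qd$, $\Delta\equiv a\ (q)$, $\Delta\equiv 1\ (4)$, $\Delta\equiv 0\ (d)$ yields $A(Q,\Delta)=2g_1(q)g_2(d)$ after a short computation with $c_r(\Delta)$; the error bound is Tolev's with $(\Delta,Q)=d$.

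Your hyperbola route using $r(n)=4\sum_{e\mid n}\chi(e)$ can recover the main term, but two points need sharpening. First, your explanation of the extra factor of $2$ is not quite right: in the hyperbola decomposition both ranges $e\le\sqrt N$ and $m\le\sqrt N$ contribute a copy of $\tfrac{N}{Q}L(1,\chi)g_1(q)g_2(d)$ to the leading term (the second range because the condition $em\equiv\Delta\ (4)$ forces $\chi(e)=\chi(m)$ on the support), and that symmetry is what doubles the constant. The Plaksin bracket $[1+\chi(\Delta)]=2$ encodes the same phenomenon, but it does not come for free from ``the mod $4$ restriction'' as you phrase it.

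Second, and more seriously, the claim that the exponents $1/3$ and $1/2$ fall out of a ``standard divisor-switching argument'' is wrong. Plain hyperbola gives error $O_\epsilon((qd)^{1/2}N^{\epsilon}+N^{1/2+\epsilon})$ at best; the $N^{1/3}$ saving is exactly the content of Tolev's theorem (itself a circle-problem-in-progressions result requiring Voronoi/van der Corput input). Without citing that, your method proves a weaker lemma with $N^{1/2}$ in place of $N^{1/3}$. This would actually still suffice for the sieve applications later in the paper, since only $\theta_1+\theta_2<1/18$ is assumed, but it does not establish the lemma as stated.
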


\begin{lemma}\label{lemma:2}
Suppose that $(a,q)=(a+h,q)=(d_1,q)=(d_2,q)=(d_1,d_2)=1$ and $4|h,$ where $d_1,d_2,q$ are square-free and odd, of size $\ll N^{O(1)}.$ Moreover suppose $h>0$ is fixed such that $p|h\Rightarrow p|2q$. Then we have 
$$\sum_{\substack{n\leq N\\ n\equiv a\,\,(\text{mod}\,\,q) \\ n\equiv 1\,\,(\text{mod}\,\,4) \\ d_1|n \\ d_2|n+h}}r(n)r(n+h) = \frac{g_1(q)^2\Gamma(d_1,d_2,q)}{q}\pi^2N+R_2(N;d_1,d_2,q),$$
where
$$\Gamma(d_1,d_2,q) = \frac{g_2(d_1)g_2(d_2)}{d_1d_2}\sum_{\substack{(r,2q)=1}}\frac{\mu(r)(d_1,r)(d_2,r)\chi[(d_1^2,r)]\chi[(d_2^2,r)]}{r^2},
$$
and
$$
R_2(N;d_1,d_2,q)\ll_{\epsilon} q^{\frac{1}{2}}d_1d_2N^{\frac{3}{4}+\epsilon}+d_1^{\frac{1}{2}}d_2^{\frac{1}{2}}N^{\frac{5}{6}+\epsilon}.
$$
\end{lemma}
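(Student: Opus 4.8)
The plan is to reduce the two-point sum for $r(n)r(n+h)$ to a main term plus an error term by unfolding both representation functions and treating the resulting lattice-point count via a hyperbola/divisor-switching argument, exactly in the spirit of the classical estimates for $\sum_{n\le N} r(n)r(n+h)$ (Ingham, Estermann, Hooley). First I would write $r(n) = 4\sum_{\ell \mid n}\chi_4(\ell)$ and similarly $r(n+h)=4\sum_{m \mid n+h}\chi_4(m)$, so that the left-hand side becomes $16$ times a sum over $\ell,m$ with $\chi_4(\ell)\chi_4(m)$ of the number of $n\le N$ in a joint congruence class: $n\equiv a \ (q)$, $n\equiv 1\ (4)$, $d_1\ell \mid n$, $d_2 m \mid n+h$. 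The congruence conditions on $n$ are: $[d_1,\ell]\mid n$, $[d_2,m]\mid n+h$, $n\equiv a\ (q)$, $n\equiv 1\ (4)$; by CRT these are consistent precisely when the pairwise gcd conditions hold (here is where $(d_1,d_2)=1$, oddness, $4\mid h$, and $p\mid h\Rightarrow p\mid 2q$ are used to guarantee compatibility of the moduli coming from $\ell,m$ with the fixed data), and when consistent the count is $N/(q\cdot 4\cdot [d_1,\ell][d_2,m]/\gcd\text{-corrections}) + O(1)$.

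The main term is then assembled by summing the smooth part $N/(\text{modulus})$ over all $\ell,m$ weighted by $\chi_4(\ell)\chi_4(m)$; the gcd factors $(\ell,d_1)$, $(m,d_2)$ and the interaction between $\ell$ and $m$ through $h$ produce exactly the multiplicative structure recorded in $\Gamma(d_1,d_2,q)$ — the $\chi[(d_i^2,r)]$ factors and the $\mu(r)(d_1,r)(d_2,r)/r^2$ sum arise from disentangling the contribution of primes dividing both $\ell$ and the relevant moduli. I would verify this by checking the Euler factor at each prime $p$ (splitting into $p\mid d_1$, $p\mid d_2$, $p\mid q$, $p\mid h$, $p$ inert/split in $\mathbb{Z}[i]$, and $p$ coprime to everything), matching against the claimed formula; the factor $g_1(q)^2 = \prod_{p\mid q}(1-\chi(p)/p)^2$ comes from restricting $\ell,m$ to be coprime to $q$, and the $\pi^2$ comes from $\sum_\ell \chi_4(\ell)/\ell \cdot \sum_m \chi_4(m)/m$ type tails combining with $16$ and the convergent local factors to give $(\pi/4)^2\cdot 16 = \pi^2$ after the analogue of $L(1,\chi_4)=\pi/4$ is folded in (more precisely the truncated divisor sums are handled by opening them only up to a parameter and estimating the tail, or by Dirichlet-hyperbola directly).

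The error term is where the real work is, and I expect it to be the main obstacle. Writing $r(n) = 4\sum_{\ell\le\sqrt n}(\ldots) + (\text{reflected sum})$ is not quite enough here; the standard route is a Dirichlet-hyperbola split of the double divisor sum $\sum_{\ell m \mid (\text{stuff})}$ so that one of $\ell,m$ is always $\le$ some threshold $\asymp N^{1/2}$ (or $N^{2/3}$), keeping the $O(1)$ error per term under control and bounding the off-diagonal smooth terms. Carrying the $d_1,d_2,q$ dependence through this split, and collecting the $O(1)$-per-lattice-point errors over $\ell,m$ in the valid range, one gets a term of size $\ll q^{1/2}d_1 d_2 N^{1/2+\epsilon}$-ish from the main truncation and the $N^{3/4}$, $N^{5/6}$ exponents presumably come from the cruder of the two hyperbola branches and from the tail where one divisor is large; the shape $q^{1/2}d_1d_2 N^{3/4+\epsilon}+d_1^{1/2}d_2^{1/2}N^{5/6+\epsilon}$ is consistent with balancing a truncation at height $\asymp N^{1/2}/(d_1 d_2)^{1/2}$ type and bounding the resulting incomplete character sums trivially. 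I would prove the error bound by: (a) isolating the diagonal/main term cleanly, (b) for the off-diagonal, fixing the larger divisor and summing the contribution of $n$ in an interval of length $\ll N/(\text{large modulus})$, and (c) summing over the large divisor using $\sum_{d\le D}\tau(d)\ll D\log D$ and the trivial bound $|\chi_4|\le 1$, carefully tracking $d_1,d_2,q$. This is deferred to the Appendix in the paper, so here I would only state that it follows from the classical method with the dependencies on $d_1,d_2,q$ made explicit, exactly as the corresponding one-point estimate (Lemma~\ref{lemma:1}) follows from the classical estimate for $\sum_{n\le N}r(n)$ in arithmetic progressions with a power-saving error.
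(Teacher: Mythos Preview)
Your approach differs from the paper's in one essential respect: you expand \emph{both} $r(n)$ and $r(n+h)$ via $r=4(\chi*1)$ and then try to control a double divisor sum over $\ell,m$. The paper instead works \emph{asymmetrically}: it expands only $r(n)$, using the hyperbola identity
\[
\frac{r(n)}{4}=2\sum_{\substack{m\mid n\\ m\le\sqrt{N}}}\chi(m)-\sum_{\substack{m\mid n\\ n/\sqrt{N}\le m\le\sqrt{N}}}\chi(m)
\]
(valid for $n\equiv 1\pmod 4$), and leaves $r(n+h)$ intact. After swapping the sum over $m\le\sqrt{N}$ outside, the inner sum is exactly $\sum_{n\equiv\Delta\ (Q)} r(n+h)$ for a single modulus $Q=4q[m,d_1,d_2]$, and this is evaluated by the already-established formula (\ref{eq:tolplak}) from Tolev/Plaksin with the explicit error $\ll (Q^{1/2}+N^{1/3})(\Delta,Q)^{1/2}N^\epsilon$. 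Summing that error over $m\le\sqrt{N}$, using $(\Delta,Q)\le d_1d_2(m,h)$ and $Q\ll mqd_1d_2$, is what produces the two terms $q^{1/2}d_1d_2N^{3/4+\epsilon}$ and $d_1^{1/2}d_2^{1/2}N^{5/6+\epsilon}$. The main term then comes from summing $A(Q,\Delta)/Q$ over $m$, following Plaksin's computation.

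Your symmetric expansion is in principle viable (it is the classical Estermann route for $\sum r(n)r(n+h)$), but the error analysis you sketch has a real gap. Counting lattice points with an $O(1)$ error per pair $(\ell,m)$ and then summing trivially over $\ell,m$ in a range of size $\sim N^{1/2}$ each gives an error of size $\sim N$, not $N^{5/6}$; the bounds you propose (``$\sum_{d\le D}\tau(d)\ll D\log D$ and $|\chi_4|\le 1$'') are far too weak to recover the stated exponents, let alone with the correct $d_1,d_2,q$ dependence. The paper avoids this entirely by feeding the nontrivial one-point estimate (Lemma~\ref{lemma:1}) back into the problem, so that the only divisor sum left is the single sum over $m\le\sqrt{N}$. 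If you want to salvage the symmetric approach you would need a genuine saving over the trivial bound for the double sum --- essentially redoing Tolev's or Plaksin's work inside your argument --- whereas the paper's method gets that saving for free from the quoted result.
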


\begin{lemma}\label{lemma:3}
Let $(a,q)=(d,q)=1$ where $d,q$ are square-free and odd, of size $\ll N^{O(1)}.$ Then we have 
\begin{align*}
\sum_{\substack{n\leq N \\ n\equiv a\,\,(\text{mod}\,\,q) \\ n\equiv 1\,\,(\text{mod}\,\,4) \\ d|n}}r^2(n) &= 
\frac{g_3(q)g_4(d)}{qd}\bigg(\log{N}+A_2+2\sum_{p|q}g_5(p)-2\sum_{p|d}g_6(p)\bigg)N+O_{\epsilon}(qN^{\frac{3}{4}+\epsilon}),
\end{align*}
where
$$A_2=2\gamma-1+2\frac{L'(1,\chi_4)}{L(1,\chi_4)}-2\frac{\zeta'(2)}{\zeta(2)}+\frac{4}{3}\log{2}.$$
Here $g_3,g_4$ are the multiplicative functions defined on primes by 
\begin{align*}
g_3(p)
&=
\begin{cases}
\frac{(p-1)^2}{p(p+1)}\,\,&\text{if $p\equiv 1\,\,(\text{mod}\,\,4),$} \\
g_1(p)\,\,&\text{if $p\equiv 3\,\,(\text{mod}\,\,4),$} 
\end{cases} \,\,\,\,\,\,\,\,\,\,\,\,\,
g_4(p)
=
\begin{cases}
\frac{4p^2-3p+1}{p(p+1)}\,\,&\text{if $p\equiv 1\,\,(\text{mod}\,\,4),$} \\
g_2(p)\,\,&\text{if $p\equiv 3\,\,(\text{mod}\,\,4),$}
\end{cases}
\end{align*}
and $g_5(p),g_6(p)$ are defined by
\begin{align*}
g_5(p)
&=
\begin{cases}
\frac{(2p+1)\log{p}}{p^2-1}\,\,&\text{if $p\equiv 1\,\,(\text{mod}\,\,4),$} \\
\frac{\log{p}}{p^2-1}\,\,&\text{if $p\equiv 3\,\,(\text{mod}\,\,4),$} 
\end{cases} \,\,\,\,\,\,\,\,\,\,\,\,\,
g_6(p)
=
\begin{cases}
\frac{(p-1)^2(2p+1)\log{p}}{(p+1)(4p^2-3p+1)}\,\,&\text{if $p\equiv 1\,\,(\text{mod}\,\,4),$} \\
\log{p}\,\,&\text{if $p\equiv 3\,\,(\text{mod}\,\,4).$}
\end{cases}
\end{align*}

%
\end{lemma}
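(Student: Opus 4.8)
The plan is to express $r(n)^2$ as a Dirichlet convolution whose main factor is a degree-four divisor function twisted by $\chi=\chi_4$, and then to evaluate the sum by Dirichlet's hyperbola method with the Gauss circle problem in arithmetic progressions (in the form of Lemma~\ref{lemma:1}) as the basic input, so that all error terms are power-saving and can be bounded trivially. Concretely, I would write $r(n)=4\sum_{e\mid n}\chi(e)$, so that $r(n)^2=16\,\tilde r(n)^2$ with $\tilde r:=\mathds 1 * \chi$, and record the Euler-product identity
\begin{equation*}
\sum_{n\ge 1}\frac{\tilde r(n)^2}{n^{s}}=\frac{\zeta(s)^2 L(s,\chi)^2}{(1+2^{-s})\,\zeta(2s)},
\end{equation*}
so that $\tilde r(n)^2=(F * h)(n)$ with $F:=\mathds 1 * \mathds 1 * \chi * \chi$ (Dirichlet series $\zeta(s)^2L(s,\chi)^2$) and $h$ having Dirichlet series $1/((1+2^{-s})\zeta(2s))$, the latter supported on products of a power of $2$ with an odd square and negligible in size. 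I would first substitute $n=dm$ (allowed since $d\mid n$): this replaces the Euler factors of $F$ at primes $p\mid d$ by modified ones --- these are what will produce $g_4$ and $g_6$ --- and leaves all subsequent error terms depending only on $q$. Since $dm\equiv 1\ (\mathrm{mod}\ 4)$ forces $m$ odd, the $2$-part of $h$ drops out, and peeling off the odd squarefree part $b$ of $m$ (terms with $(b,q)>1$ vanish because $(a,q)=1$, and $b^2\equiv 1\ (\mathrm{mod}\ 8)$, so the congruences pass cleanly to the complementary variable $c$) reduces everything to an asymptotic, uniform in $q$ with power-saving error, for a sum of the modified $F$ over $c\le N/(db^2)$ in a fixed residue class modulo $4q$.

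That sum I would evaluate by the hyperbola method: writing the modified $F$ as $\tilde r * \tilde r$ and splitting $c=uv$, the inner sum over $v$ in a residue class modulo $4q$ is exactly of the type handled by Lemma~\ref{lemma:1} (the surviving classes are $\equiv 1\ (\mathrm{mod}\ 4)$, since $r$ vanishes on odd integers $\equiv 3\ (\mathrm{mod}\ 4)$), carrying a main term plus an error of the order $\bigl((q(y/u))^{1/2}+(y/u)^{1/3}\bigr)(y/u)^{\epsilon}$ where $y=N/(db^2)$. Summing this over $u\le\sqrt{y}$ against $\tilde r(u)$, together with the symmetric sum and the diagonal correction of the hyperbola, produces a main term of the form $\frac{(\text{local factors})}{q}\,y(\log y+O(1))$ with total error $O_{\epsilon}(q\,y^{3/4+\epsilon})$; summing over $b$ then costs only a convergent factor (from the $b^{-3/2}$ saving), giving the stated error $O_{\epsilon}(q N^{3/4+\epsilon})$.

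It remains to identify the main term precisely. The leading constant and the secondary constant $A_2$ are read off from the Laurent expansion at $s=1$ of $\zeta(s)^2L(s,\chi)^2(1-2^{-s})/((1+2^{-s})\zeta(2s))$: the factor $\zeta(s)^2$ contributes the $2\gamma$, the factor $L(s,\chi)^2$ the $2L'(1,\chi_4)/L(1,\chi_4)$, the factor $\zeta(2s)^{-1}$ the $-2\zeta'(2)/\zeta(2)$, and the $2$-adic factor $(1-2^{-s})/(1+2^{-s})$ the $\tfrac{4}{3}\log 2$ (its logarithmic derivative at $s=1$), while the passage from Dirichlet coefficients to the summatory function supplies the $-1$. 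The $q$-dependence comes from expanding the residue-class condition over Dirichlet characters $\psi$ modulo $q$: for non-principal $\psi$ the relevant series $L(s,\psi)^2L(s,\chi\psi)^2$ (up to finitely many Euler factors) is holomorphic at $s=1$ since $\chi$ is non-principal, so only $\psi_0$ feeds the main term, and deleting the Euler factors at $p\mid q$ yields $g_3(q)/q$ and, on differentiating, $2\sum_{p\mid q}g_5(p)$; the modification at $p\mid d$ introduced earlier similarly yields $g_4(d)/d$ and $-2\sum_{p\mid d}g_6(p)$ --- the $-\log d$ here being visible already in $\log(N/d)$ --- and one checks $g_3(p)=g_1(p)$, $g_4(p)=g_2(p)$ for $p\equiv 3\ (\mathrm{mod}\ 4)$, since there the local factor of $L(s,\chi)$ is already accounted for.

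The hard part will be the hyperbola computation: extracting the full secondary term --- not just the leading $\log y$ --- while keeping the error power-saving and polynomial in $q$. This forces one to carry constant terms carefully through the hyperbola split and through every partial summation on the $\tilde r$-sums, to control the interaction of the conductor-$4$ character $\chi$ with the modulus $q$ and the divisor $d$, and to do the Euler-product bookkeeping accurately enough to pin down $g_3,\dots,g_6$ and $A_2$ exactly.
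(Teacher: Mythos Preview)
Your approach is viable and genuinely different from the paper's. The paper proceeds by Perron's formula: it expands the residue class modulo $4q$ over Dirichlet characters, identifies the generating series as $16L(s,\chi_Q)^2L(s,\chi_4\chi_Q)^2/L(2s,\chi_Q^2)$ times a local factor $A(s,d,Q)$, shifts the contour to $\sigma=1/2$, reads the main term (including $A_2$ and the $g_3,\dots,g_6$ dependence) directly from the residue at the double pole coming from the principal character, and bounds the horizontal and vertical contours via Cauchy--Schwarz together with the fourth-moment estimate $\tfrac{1}{\varphi(Q)T}\sum_{\chi}\int_T^{2T}|L(\tfrac12+it,\chi)|^4\,dt\ll (QT)^\epsilon$, choosing $T=x^{1/4}$ to obtain the error $O_\epsilon(qN^{3/4+\epsilon})$. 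Your route replaces the complex analysis and the fourth-moment input by the elementary hyperbola method with Tolev's pointwise error for $r(n)$ in progressions (Lemma~\ref{lemma:1}) as the only arithmetic input; this buys you a more self-contained argument and, if carried through, an error no worse than the paper's. The price is exactly what you flag: the secondary constant does not fall out of a single residue computation but must be assembled by hand from the constants in the two partial summations, the diagonal correction, and the convolution with $h$, and the coprimality-to-$q$ constraints on the outer hyperbola variable (handled by M\"obius over $e\mid q$ and Tolev with modulus $4e$) make the bookkeeping for $g_3,g_5$ delicate. Both approaches ultimately identify the same Dirichlet-series data at $s=1$, so the constants must agree; the paper's method is simply cleaner for reading them off.
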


We will prove each of these results in Appendix A. Lemma~\ref{lemma:1} follows from two known results. Lemma~\ref{lemma:2} follows by adapting the method used by Plaksin in~\cite{Plak}, where a similar sum is considered. Finally Lemma~\ref{lemma:3} can be shown using standard Perron's formula arguments, together with a fourth moment estimate for Dirichlet L-functions. 

When finding the corresponding estimates for $\rho(n)$ the following sums naturally appear (for the definitions of $W,W_1$ and $D_0$ see (\ref{eq:W}) below the fold):
\begin{align} \label{eq:X}
X_{N,W}&=\sum_{\substack{a\leq v \\ (a,W)=1 \\ p|a\Rightarrow p\equiv 1\,\,(\text{mod}\,\,4)}} \frac{\mu(a)}{a}\log{\frac{v}{a}},\\ \label{eq:Y}
Y_{N,W}&=\sum_{\substack{a,b\leq v\\ (a,W)=(b,W)=1 \\ (a,b)=1 \\ p|a,b\Rightarrow p\equiv 1\,\,(\text{mod}\,\,4)}} \frac{\mu(a)\mu(b)}{g_7(a)g_7(b)}\log{\frac{v}{a}}\log{\frac{v}{b}}, \\ \label{eq:Z1}
Z_{N,W}^{(1)}&=\sum_{\substack{a,b\leq v\\ (a,W)=(b,W)=1 \\ p|a,b\Rightarrow p\equiv 1\,\,(\text{mod}\,\,4)}}\frac{\mu(a)\mu(b)g_4([a,b])}{g_2(a)g_2(b)[a,b]}\log{\frac{v}{a}}\log{\frac{v}{b}}, \\ \label{eq:Z2}
Z_{N,W}^{(2)}&=\sum_{\substack{a,b\leq v\\ (a,W)=(b,W)=1 \\ p|a,b\Rightarrow p\equiv 1\,\,(\text{mod}\,\,4)}}\frac{\mu(a)\mu(b)g_4([a,b])}{g_2(a)g_2(b)[a,b]}\log{\frac{v}{a}}\log{\frac{v}{b}}\sum_{p|[a,b]}g_6(p).
\end{align}
Here $g_7$ is the multiplicative function defined on primes by $g_7(p)=p+1.$ The following lemma evaluates the auxiliary sums above.

\begin{lemma}[Auxiliary estimates for $\rho(n)$]\label{lemma:aux}
We have 
\begin{align*}
X_{N,W} &= (1+o(1))\frac{8A\log^{\frac{1}{2}}{v}}{\pi g_1(W_1)}, \\
Y_{N,W} &= (1+o(1))\frac{64A^2\log{v}}{\pi^2g_1(W_1)^2}, \\
Z_{N,W}^{(1)} &=(1+o(1))\frac{32A^3\log^{\frac{1}{2}}{v}}{\pi^2g_1(W_1)^3}, \\
Z_{N,W}^{(2)} &=-(1+o(1))\frac{16A^3\log^{\frac{3}{2}}{v}}{\pi^2g_1(W_1)^3},
\end{align*}
where $A$ is defined as in (\ref{eq:A}). In each case one may take the $o(1)$ term to be $O(D_0^{-1}).$
\end{lemma}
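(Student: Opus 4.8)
The plan is to reduce each of the four auxiliary sums to a contour integral (or Selberg--Delange / Landau-type) estimate, using the fact that all four are sums over integers supported on primes $\equiv 1 \pmod 4$, weighted by a logarithmic kernel. Concretely, I would first observe that $X_{N,W}$, $Y_{N,W}$, $Z^{(1)}_{N,W}$, $Z^{(2)}_{N,W}$ are all of the shape $\sum_{a,b} c(a,b) (\log v/a)(\log v/b)$ (or the single-variable analogue for $X$), and write each logarithmic factor as a Perron-type integral $\log(v/a) = \frac{1}{2\pi i}\int_{(\kappa)} (v/a)^s s^{-2}\,ds$. This converts each sum into a one- or two-variable contour integral of a Dirichlet series $F(s)$ (resp. $F(s_1,s_2)$) built multiplicatively from the local factors at primes $p \equiv 1 \pmod 4$, against the kernel $v^{s}/s^2$ (resp. $v^{s_1+s_2}/(s_1 s_2)^2$).

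Next I would identify the analytic nature of each Dirichlet series. Because the Euler products run only over $p \equiv 1 \pmod 4$, each $F$ factors as $L(s,\chi_4)^{1/2}\zeta(s)^{1/2}$ (or an appropriate power thereof) times a function holomorphic and non-vanishing in a neighbourhood of $\mathrm{Re}(s) = 1$ — this is exactly the mechanism that produces the factor $A$ (the Ramanujan--Landau constant) and the half-integer powers of $\log v$. For $X$ the relevant series has a singularity of type $(s-1)^{1/2}$ at $s=1$, giving main term $\asymp \log^{1/2} v$; for $Y$ one gets a $(s-1)$-type pole structure in the combined variable giving $\log v$; for $Z^{(1)}$ and $Z^{(2)}$ the presence of $g_4([a,b])/(g_2(a)g_2(b)[a,b])$ and (for $Z^{(2)}$) the extra $\sum_{p|[a,b]} g_6(p)$ shifts the branch exponent, producing $\log^{1/2} v$ and $\log^{3/2} v$ respectively, the latter because differentiating the branch point in the $Z^{(2)}$ case (the $\sum_{p} g_6(p)$ acts like a logarithmic derivative) raises the power by one. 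I would move the contour to a Hankel-type contour around $s=1$ and extract the leading term via the standard Landau/Selberg--Delange lemma, taking care that the coprimality conditions $(a,W)=(b,W)=1$ (resp. $(a,b)=1$ for $Y$) only modify the local factors at primes dividing $W$, which is where the $g_1(W_1)^{-1}$, $g_1(W_1)^{-2}$, $g_1(W_1)^{-3}$ factors come from, and that $W_1$ (resp. $D_0$) is small enough — a power of $\log N$ — that these corrections are uniform and the secondary terms are $O(D_0^{-1})$ relative to the main term.

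The main obstacle I expect is the two-variable sums $Z^{(1)}$ and $Z^{(2)}$, where the summand is \emph{not} multiplicative as a function of the pair $(a,b)$ because of the $[a,b]$ (lcm) and the coupling in $g_4([a,b])$. The standard device is to diagonalise: write $a = a' e$, $b = b' e$ with $e = (a,b)$ and $a', b'$ coprime, so that $[a,b] = a'b'e$ and the local factor at a prime dividing $e$ differs from the generic one; this restores a genuine Euler product in the three independent variables $a', b', e$ and lets one factor the Dirichlet series. One then must bookkeep carefully how the $\log(v/a)\log(v/b) = \log(v/a'e)\log(v/b'e)$ kernel distributes across the three variables after the Perron substitution, and for $Z^{(2)}$ handle the additional $\sum_{p \mid a'b'e} g_6(p)$ by differentiating the Euler product with respect to an auxiliary parameter (or equivalently splitting the sum by which variable the prime $p$ divides). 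Once the series is in product form the residue computation is routine but lengthy; the bulk of the effort is in verifying that the ``error'' Euler factor is holomorphic and bounded in a fixed zero-free region of $L(s,\chi_4)$ so that the contour shift is legitimate and the error is genuinely $O(D_0^{-1})$ times the main term. I would defer all of these computations, as the paper does, to the Appendix, and here only record the statement.
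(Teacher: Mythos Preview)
Your proposal is correct in spirit and uses the same underlying machinery as the paper---the Selberg--Delange method applied to Euler products supported on primes $p\equiv 1\pmod 4$---so the main terms, the appearance of $A$, and the half-integer powers of $\log v$ all arise exactly as you describe.

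The paper's execution differs from yours in two ways worth noting. First, for the two-variable sums $Y_{N,W}$, $Z^{(1)}_{N,W}$, $Z^{(2)}_{N,W}$ the paper does \emph{not} set up a genuine two-variable contour integral. Instead it freezes one variable, applies the one-variable Selberg--Delange lemma to the inner sum (for $Z^{(1)}$ this inner series turns out to have exponent $z=-1$, so the inner sum is asymptotically constant in $v$, which is the real reason the final power is $\log^{1/2} v$ rather than $\log v$), and then applies the one-variable lemma again to the resulting outer sum. This iterated approach avoids the bookkeeping of a two-variable Hankel contour and makes the error analysis more transparent. Your gcd-diagonalisation $a=a'e$, $b=b'e$ is essentially equivalent to the paper's decomposition via $d=(a,b)$ and $b=md$, so there is no real difference there. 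Second, for $Z^{(2)}_{N,W}$ the paper does not differentiate an auxiliary parameter; it pulls the sum over primes $q\mid [a,b]$ to the outside, splits by inclusion--exclusion into $q\mid a$, $q\mid b$, $q\mid (a,b)$, evaluates each piece as a perturbation of $Z^{(1)}_{N,W}$, and then sums over $q$ by partial summation using $g_6(q)\sim \tfrac12\log q$. Your differentiation idea would also work and is arguably cleaner conceptually, but the paper's explicit route makes the $\tfrac13(\log v)^{3/2}$ factor (and hence the $-\tfrac12$ in front) visible by hand.
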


We remark that the estimate for $X_{N,1}$ appears in Hooley's work (after correcting a misprint - cf.~\hbox{\cite[Lemma 5]{Hooley}} and note his slightly different definition of $A$). We prove Lemma \ref{lemma:aux} in Appendix B. Each sum can be evaluated by the Selberg-Delange method.


\section{The sieve set-up}\label{section:sieveresults}

We now state our sieve results and use them to deduce Theorem~\ref{theo:intro1.1}. For the rest of the paper $k$ is fixed, $\mathcal{H}=\{h_1,\ldots,h_k\}$ is a fixed admissible set such that $4|h_i$ for each $i,$ and $N$ is sufficiently large in terms of any fixed quantity. We allow any of the constants hidden in the Landau notation to depend on $k,$ without explicitly specifying so. 

We will employ a $4W$-trick in our sieve calculations. Let 
\begin{equation}\label{eq:W}
W=\prod_{2<p\leq D_0} p,
\end{equation}
where $D_0=(\log\log{N})^3,$ so that $W\ll (\log{N})^{2(\log\log{N})^2} \ll_{\epsilon} N^{\epsilon}$ for any fixed $\epsilon>0$ by the prime number theorem. It will prove useful to define 
\begin{equation}
W_1=\prod_{\substack{p\leq D_0 \\ p\equiv 1\,\,(\text{mod}\,\,4)}}p,\,\,\,\,\,\,\hspace{10mm}W_3=\prod_{\substack{p\leq D_0 \\ p\equiv 3\,\,(\text{mod}\,\,4)}}p,
\end{equation}
so that $W=W_1W_3.$ 

By admissibility of $\mathcal{H}$ there exists a fixed residue class $v_0\,\,(\text{mod}\,\,W)$ such that $(v_0+h_i,W)=1$ for each $i$. Fix $1\leq m,l\leq k$ with $m\neq l.$ We consider four types of sums:

\begin{align}
S_{1} &= \sum_{\substack{N\leq n< 2N \\ n\equiv v_0\,\,(\text{mod}\,\,W) \\ n\equiv 1\,\,(\text{mod}\,\,4)}}\bigg(\sum_{\substack{d_1,\ldots,d_k \\d_i|n+h_i \,\,\forall i}}\lambda_{d_1,\ldots,d_k}\bigg)^{2}, \\
S_{2}^{(m)}&= \sum_{\substack{N\leq n< 2N \\ n\equiv v_0\,\,(\text{mod}\,\,W) \\ n\equiv 1\,\,(\text{mod}\,\,4)}} \rho(n+h_m) \bigg(\sum_{\substack{d_1,\ldots,d_k \\d_i|n+h_i \,\,\forall i}}\lambda_{d_1,\ldots,d_k}\bigg)^{2}, \\
S_{3}^{(m,l)}&= \sum_{\substack{N\leq n< 2N \\ n\equiv v_0\,\,(\text{mod}\,\,W) \\ n\equiv 1\,\,(\text{mod}\,\,4)}} \rho(n+h_m)\rho(n+h_l) \bigg(\sum_{\substack{d_1,\ldots,d_k \\d_i|n+h_i \,\,\forall i}}\lambda_{d_1,\ldots,d_k}\bigg)^{2}, \\
S_4^{(m)}&= \sum_{\substack{N\leq n< 2N \\ n\equiv v_0\,\,(\text{mod}\,\,W) \\ n\equiv 1\,\,(\text{mod}\,\,4)}} \rho^{2}(n+h_m) \bigg(\sum_{\substack{d_1,\ldots,d_k \\d_i|n+h_i \,\,\forall i}}\lambda_{d_1,\ldots,d_k}\bigg)^{2}. 
\end{align}
Because $k$ is fixed, we may assume that $D_0$ is sufficiently large so that
\begin{equation}
p|h_i-h_j \Rightarrow p|2W.
\end{equation}
\begin{remark}
For the second moment estimate, it proves important to control the residue classes of the translates $n+h\,\,(\text{mod}\,\,4),$ hence the condition $n\equiv 1\,\,(\text{mod}\,\,4)$ in our sieve sums and also the assumption $4|h$ for our admissible set. This is because of the inherent bias numbers representable as a sum of two squares have modulo 4. 
\end{remark}


Our first Proposition evaluates these sums for general half-dimensional Maynard-Tao sieve weights. Fix $0<\theta_1<1/18$ in the definition of $\rho(n)$ (see (\ref{eq:HooleysRho})). We also define the normalisation constant
\begin{equation}\label{eq:B}
B= \frac{A}{\Gamma(1/2)\sqrt{L(1,\chi_4)}}\cdot\frac{\varphi(W_3)(\log{R})^{\frac{1}{2}}}{W_3} = \frac{2A}{\pi}\cdot\frac{\varphi(W_3)(\log{R})^{\frac{1}{2}}}{W_3}.
\end{equation}

\begin{prop}[Half-dimensional Maynard-Tao sieve estimates]\label{prop:sieve}
Let $R=N^{\theta_2/2}$ for some small fixed positive constant $\theta_2$ such that $0<\theta_1+\theta_2<1/18$. Let $\lambda_{d_1,\ldots,d_k}$ be defined in terms of a fixed smooth function $F$ by 
$$\lambda_{d_1,\ldots,d_k} = \bigg(\prod_{i=1}^{k}\mu(d_i)d_i\bigg)\sum_{\substack{r_1,\ldots,r_k \\ d_i|r_i \forall i \\ (r_i,W)=1\forall i \\ p|r_i \Rightarrow p\equiv 3\,\,(\text{mod}\,\,4)\forall i}} \frac{\mu(\prod_{i=1}^{k}r_i)^{2}}{\prod_{i=1}^{k}\varphi(r_i)}F\bigg(\frac{\log{r_1}}{\log{R}},\ldots,\frac{\log{r_k}}{\log{R}}\bigg),$$
whenever $\prod_{i=1}^{k}d_i\leq R$ is squarefree, $(\prod_{i=1}^{k}d_i,W)=1$ and $p|\prod_{i=1}^{k}d_i\Rightarrow p\equiv 3\,\,(\text{mod}\,\,4),$ and let $\lambda_{d_1,\ldots,d_k}=0$ otherwise. Moreover let $F$ be supported on $R_{k}=\{(x_1,\ldots,x_k)\in[0,1]^{k}: \sum_{i=1}^{k}x_i \leq 1\}.$ Then we have 
\begin{align*}
S_{1}  &=(1+o(1)) \frac{B^{k}N}{4W}L_{k}(F), \\
S_2^{(m)} &= (1+o(1)) \frac{4\sqrt{\frac{\log{R}}{\log{v}}}B^kN}{\pi W}L_{k;m}(F), \\
 S_{3}^{(m,l)} &= (1+o(1)) \frac{64(\frac{\log{R}}{\log{v}})B^{k}N}{\pi^2 W}L_{k;m,l}(F), \\
 S_{4}^{(m)} &= (1+o(1))\frac{2\sqrt{\frac{\log{R}}{\log{v}}}(\frac{\log{N}}{\log{v}}+1)B^{k}N}{\pi W}L_{k;m}(F)
\end{align*}
provided $L_k(F),L_{k;m}(F)$ and $L_{k;m,l}(F)$ are non-zero, where
\begin{align*}
L_{k}(F)&= \int_0^1\ldots \int_0^1 \bigg[F(x_1,\ldots,x_k)\bigg]^2 \prod_{i=1}^{k}\frac{\mathrm{d}x_i}{\sqrt{x_i}} \\
L_{k;m}(F)&= \int_0^1\ldots \int_0^1 \bigg[\int_0^1F(x_1,\ldots,x_k)\frac{\mathrm{d}x_m}{\sqrt{x_m}}\bigg]^2 \prod_{\substack{i=1 \\ i\neq m}}^{k}\frac{\mathrm{d}x_i}{\sqrt{x_i}}, \\
L_{k;m,l}(F)&= \int_0^1\ldots \int_0^1 \bigg[\int_0^1\bigg(\int_0^1F(x_1,\ldots,x_k)\frac{\mathrm{d}x_m}{\sqrt{x_m}}\bigg)\frac{\mathrm{d}x_l}{\sqrt{x_l}}\bigg]^2  \prod_{\substack{i=1 \\ i\neq m,l}}^{k}\frac{\mathrm{d}x_i}{\sqrt{x_i}}. 
\end{align*}
\end{prop}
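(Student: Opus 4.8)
The plan is to expand each of the four sums $S_1, S_2^{(m)}, S_3^{(m,l)}, S_4^{(m)}$ by multiplying out the square $\big(\sum_{d_i \mid n+h_i} \lambda_{d_1,\dots,d_k}\big)^2$, swapping the order of summation so that the inner sum is over $n$ in an arithmetic progression, and then applying the relevant estimate from Section~\ref{section:APs}. For $S_1$ the inner sum is simply a count of integers in an arithmetic progression modulo $[d_i, e_i]\cdot 4W$ and is handled by elementary means. For $S_2^{(m)}$ we must first unfold the definition $\rho(n+h_m) = t(n+h_m) r(n+h_m)$ from~(\ref{eq:HooleysRho}), introducing an extra variable $a \mid n+h_m$ with $p \mid a \Rightarrow p \equiv 1 \pmod 4$; the resulting inner sum over $n$ is then evaluated by Lemma~\ref{lemma:1}. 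Similarly $S_3^{(m,l)}$ reduces to Lemma~\ref{lemma:2} (after unfolding both $\rho(n+h_m)$ and $\rho(n+h_l)$, which is why the hypothesis $4 \mid h_i$ and the congruence condition $p \mid h > 0 \Rightarrow p \mid 2q$ of Lemma~\ref{lemma:2} are exactly met by our $4W$-trick setup), and $S_4^{(m)}$ to Lemma~\ref{lemma:3} (unfolding $\rho^2(n+h_m) = t(n+h_m)^2 r(n+h_m)^2$, which brings in two auxiliary variables $a, b$ with coprimality conditions). In every case the power-saving error terms $R_1, R_2, O_\epsilon(qN^{3/4+\epsilon})$ sum to a negligible quantity because the moduli $d_i, e_i$ are $\leq R = N^{\theta_2/2}$ with $\theta_1 + \theta_2 < 1/18$, so all error contributions can be bounded trivially (no Bombieri--Vinogradov-type input is needed).

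\textbf{Reduction of the main terms.} After inserting the main term from the appropriate lemma, the arithmetic part of the sum factors (up to the fixed modulus $W$) into a product over the variables $d_i, e_i$ (and the auxiliary variables $a, b$ coming from $t(n)$), and the whole expression becomes a multidimensional divisor-type sum weighted by $F(\log r_1/\log R, \dots)$. The standard Maynard--Tao manipulation — substituting the definition of $\lambda_{d_1,\dots,d_k}$ in terms of $F$, executing the sums over the $d_i$ against the $r_i$ via Möbius inversion, and collapsing the resulting multiplicative arithmetic factors — converts the sum into an integral of $F$ (and its partial integrals in the distinguished variables $m$, or $m$ and $l$) against the half-dimensional measure $\prod \mathrm{d}x_i/\sqrt{x_i}$. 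This is where the ``half-dimensional'' character enters: the local densities $g_1, g_2, g_4$ etc. have averages producing a $\zeta(s)^{1/2}$-type singularity rather than $\zeta(s)$, so the relevant contour integral / Selberg--Delange computation yields $\Gamma(1/2) = \sqrt{\pi}$ and factors of $A$, $L(1,\chi_4)$, and powers of $\log R / \log v$ — precisely the constants packaged into $B$ in~(\ref{eq:B}) and the auxiliary estimates $X_{N,W}, Y_{N,W}, Z^{(1)}_{N,W}, Z^{(2)}_{N,W}$ of Lemma~\ref{lemma:aux}. The auxiliary sums are exactly what appears when the $a, b$ variables from $t(n)$ are summed out, so Lemma~\ref{lemma:aux} plugs in directly; for $S_4^{(m)}$ the extra $\log N$ factor and the $Z^{(2)}$ term account for the shape of the $S_4$ main term with its $(\log N/\log v + 1)$ factor.

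\textbf{The main obstacle.} The genuinely delicate step is the bookkeeping in $S_3^{(m,l)}$ and $S_4^{(m)}$: one must correctly track the interaction between the sieve variables $d_i$ (restricted to prime factors $\equiv 3 \bmod 4$), the auxiliary variables from Hooley's $t(n)$ (restricted to prime factors $\equiv 1 \bmod 4$), and the singular series $\Gamma(d_1,d_2,q)$ in Lemma~\ref{lemma:2} — verifying that the cross-conditions (coprimality of $d_1, d_2$, of each $d_i$ with $q$, and the splitting of primes by residue mod $4$) make the entire arithmetic sum genuinely multiplicative so it factors into a main local product times manageable error. The factor $\Gamma(d_1,d_2,q)$ in particular is not simply multiplicative in $[d_1,d_2]$, so one needs to argue that after summing against $F$ the deviation from the ``diagonal'' approximation $g_4([d_1,d_2])/(g_2(d_1)g_2(d_2)[d_1,d_2])$ is lower-order; this is the technical heart deferred to Lemma~\ref{lemma:generalsieve} and Sections~\ref{section:sievetechnical}--\ref{section:sieveproofs}. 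Once that factorisation is in hand, matching constants is a (lengthy but routine) computation, and the requirement that $L_k(F), L_{k;m}(F), L_{k;m,l}(F)$ be non-zero is exactly what guarantees the claimed asymptotics are genuine leading-order terms rather than being swamped by the error.
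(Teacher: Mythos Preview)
Your outline matches the paper's approach: expand the square, swap summation, apply Lemmas~\ref{lemma:1}--\ref{lemma:3} to the inner sum over $n$, separate out the auxiliary sums $X_{N,W}, Y_{N,W}, Z_{N,W}^{(1)}, Z_{N,W}^{(2)}$ via Lemma~\ref{lemma:aux}, and reduce the remaining sieve sum to Lemma~\ref{lemma:generalsieve}. Two points need correction.

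First, for $S_3^{(m,l)}$ there is no ``deviation from a diagonal approximation'' to control. Because the sieve moduli $[d_i,e_i]$ are supported on primes $\equiv 3\pmod 4$, the auxiliary variables $a,b$ on primes $\equiv 1\pmod 4$, and everything is coprime to $W$, the Euler product defining $\Gamma([d_m,e_m]a,[d_l,e_l]b,\,W\prod_{i\neq m,l}[d_i,e_i])$ factors \emph{exactly} as
\[
\prod_{p\nmid 2W}\Big(1-\tfrac{1}{p^2}\Big)^{-1}\cdot\frac{g_2(a)g_2(b)}{g_7(a)g_7(b)}\cdot\prod_{i\neq m,l}\frac{[d_i,e_i]}{g_1([d_i,e_i])\varphi([d_i,e_i])}\cdot\prod_{j=m,l}\frac{1}{[d_j,e_j]\varphi([d_j,e_j])},
\]
landing directly in the setting of Lemma~\ref{lemma:generalsieve} with $|J|=2$. (The expression $g_4([a,b])/(g_2(a)g_2(b)[a,b])$ you quote belongs to $S_4$, not $S_3$.)

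Second, your account of $S_4^{(m)}$ omits a structural step. The main term in Lemma~\ref{lemma:3} carries the extra factor $2\sum_{p\mid q}g_5(p)-2\sum_{p\mid d}g_6(p)$, which depends on the sieve moduli $[d_i,e_i]$ themselves. The paper therefore splits $S_4^{(m)}=\Lambda_1+\Lambda_2+\Lambda_3+\Lambda_4$ and must show that $\Lambda_2,\Lambda_3$ (the pieces coming from primes $p\mid[d_i,e_i]$) are negligible. This requires the \emph{upper-bound} parts~(i) and~(ii) of Lemma~\ref{lemma:generalsieve}, applied to sums $T^{(p,i)}=S_{J,p,1,i}+S_{J,1,p,i}-S_{J,p,p,i}$ with a prescribed prime divisibility constraint, not just the asymptotic in part~(iii). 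Without this step the argument for $S_4^{(m)}$ is incomplete.
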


From this, one can deduce the corresponding results for the modification of the Maynard-Tao sieve described in Section~2.

\begin{prop}[Modified Maynard-Tao sieve estimates]\label{prop:sievemod}
Suppose in addition to the hypotheses of Proposition~\ref{prop:sieve} we have a partition $\mathcal{H}=\{h_1,\ldots,h_k\}=B_1\cup\ldots\cup B_M$ into bins $B_i$ of fixed and finite size $k_i.$ Write $B_i=\{h_{k_0+\ldots+k_{i-1}+1},\ldots,h_{k_0+\ldots+k_i}\}$ with the convention that $k_0=0$. Suppose further we have a corresponding factorisation 
\begin{equation*}
F(x_1,\ldots,x_k)=\prod_{i=1}^{M}F_i(x_{k_0+\ldots+k_{i-1}+1},\ldots,x_{k_0+\ldots+k_i}),
\end{equation*}
where each $F_i$ is smooth and supported on the simplex 
\begin{equation*}
R_{B_i,\beta_i}=\{(x_{k_0+\ldots+k_{i-1}+1},\ldots,x_{k_0+\ldots+k_i})\in[0,1]^{k_i}:0\leq \sum_{j=k_0+\ldots+k_{i-1}+1}^{k_0+\ldots+k_i}x_j\leq \beta_i\}.
\end{equation*}
Here $(\beta_i)_{i=1}^{\infty}$ is a sequence of real numbers such that $\sum_{i=1}^{\infty}\beta_i\leq 1.$ Then for $h_m,h_l\in B_j$ we have
\begin{align*}
S_{1}  &=(1+o(1)) \frac{B^{k}N}{4W}\bigg(\prod_{i=1}^{M}L_{|B_i|}(F)\bigg), \\
S_2^{(m)} &= (1+o(1)) \frac{4\sqrt{\frac{\log{R}}{\log{v}}}B^kN}{\pi W}\bigg(\prod_{\substack{i=1}}^{M}L_{|B_i|}(F_i)\bigg)\frac{L_{|B_j|;m}(F_j)}{L_{|B_j|}(F_j)},  \\
 S_{3}^{(m,l)} &= (1+o(1)) \frac{64(\frac{\log{R}}{\log{v}})B^{k}N}{\pi^2W}\bigg(\prod_{\substack{i=1}}^{M}L_{|B_i|}(F_i)\bigg)\frac{L_{|B_j|;m,l}(F_j)}{L_{|B_j|}(F_j)}, \\
 S_{4}^{(m)} &= (1+o(1))\frac{2\sqrt{\frac{\log{R}}{\log{v}}}(\frac{\log{N}}{\log{v}}+1)B^{k}N}{\pi W}\bigg(\prod_{\substack{i=1}}^{M}L_{|B_i|}(F_i)\bigg)\frac{L_{|B_j|;m}(F_j)}{L_{|B_j|}(F_j)}. 
\end{align*}
\end{prop}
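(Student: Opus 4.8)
The plan is to deduce this from Proposition~\ref{prop:sieve} by unwinding what the factorisation hypothesis does to each of the four relevant functionals $L_k(F)$, $L_{k;m}(F)$, $L_{k;m,l}(F)$. The key observation is that all of these functionals are built out of the same one-dimensional measure $\mathrm{d}x_i/\sqrt{x_i}$ on each coordinate, and that the integrand in each case is (after the appropriate partial integrations) a product over the bins. Concretely: since $F=\prod_{i=1}^M F_i$ with $F_i$ depending only on the coordinates indexed by $B_i$, Fubini/Tonelli immediately gives
\begin{equation*}
L_k(F) = \prod_{i=1}^M L_{|B_i|}(F_i),
\end{equation*}
which is the first displayed identity once substituted into the formula for $S_1$ from Proposition~\ref{prop:sieve}.

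Next I would handle $S_2^{(m)}$. Suppose $h_m\in B_j$. In the functional $L_{k;m}(F)$ the inner integral is $\int_0^1 F\,\mathrm{d}x_m/\sqrt{x_m}$; since only the factor $F_j$ involves $x_m$, this equals $\big(\prod_{i\neq j}F_i\big)\cdot\int_0^1 F_j\,\mathrm{d}x_m/\sqrt{x_m}$. Squaring and integrating out the remaining variables, the coordinates outside $B_j$ contribute $\prod_{i\neq j}L_{|B_i|}(F_i)$ and the coordinates inside $B_j$ contribute exactly $L_{|B_j|;m}(F_j)$, so
\begin{equation*}
L_{k;m}(F) = \Big(\prod_{i\neq j} L_{|B_i|}(F_i)\Big)\, L_{|B_j|;m}(F_j) = \Big(\prod_{i=1}^{M} L_{|B_i|}(F_i)\Big)\,\frac{L_{|B_j|;m}(F_j)}{L_{|B_j|}(F_j)}.
\end{equation*}
Substituting into the $S_2^{(m)}$ and $S_4^{(m)}$ formulae of Proposition~\ref{prop:sieve} gives the stated asymptotics (the extra arithmetic factor $(\tfrac{\log N}{\log v}+1)$ in $S_4^{(m)}$ is carried along unchanged). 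For $S_3^{(m,l)}$ with $h_m,h_l\in B_j$ (the hypothesis restricts to this case), the same bookkeeping applied to the doubly-nested integral in $L_{k;m,l}(F)$ gives $L_{k;m,l}(F)=\big(\prod_{i\neq j}L_{|B_i|}(F_i)\big)L_{|B_j|;m,l}(F_j)$, hence the third displayed identity after dividing and multiplying by $L_{|B_j|}(F_j)$.

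The only genuine point to check — and the step I would expect to require the most care — is that Proposition~\ref{prop:sieve}'s hypotheses are actually met by the factorised weights: one needs $F=\prod F_i$ to be smooth (clear, as a product of smooth functions) and supported on the full simplex $R_k=\{\sum x_i\le 1\}$. The latter follows because $F_i$ is supported on $R_{B_i,\beta_i}=\{\sum_{j\in B_i}x_j\le\beta_i\}$ and $\sum_{i=1}^\infty\beta_i\le 1$, so on the support of the product we have $\sum_{i=1}^k x_i=\sum_{i=1}^M\sum_{j\in B_i}x_j\le\sum_{i=1}^M\beta_i\le 1$. One should also note that the non-vanishing provisos ($L_k(F)$, $L_{k;m}(F)$, $L_{k;m,l}(F)$ non-zero) translate, via the product formulae just derived, into the non-vanishing of the individual bin functionals $L_{|B_i|}(F_i)$ and $L_{|B_j|;m}(F_j)$, $L_{|B_j|;m,l}(F_j)$, which is exactly what one verifies when choosing the $F_i$ in the application. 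With support and smoothness confirmed, Proposition~\ref{prop:sieve} applies verbatim and the four displayed asymptotics follow by direct substitution.
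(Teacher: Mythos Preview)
Your proposal is correct and follows essentially the same approach as the paper: verify that $F=\prod_i F_i$ is smooth and supported on $R_k$ (via the constraint $\sum_i\beta_i\le 1$), then apply Proposition~\ref{prop:sieve} and factorise the functionals $L_k(F)$, $L_{k;m}(F)$, $L_{k;m,l}(F)$ over the bins using Fubini. If anything, you have supplied more detail than the paper, which simply asserts that ``one can easily check'' the factorisations.
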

\begin{proof}
The hypotheses imply $F=\prod_{i=1}^{M}F_i$ is also smooth and supported on $R_{k},$ and hence the results of Proposition~\ref{prop:sieve} apply. It suffices to to show the functionals factorise in the forms stated. Because our set-up ensures that $\text{supp}(F)= \text{supp}(F_1)\times\ldots\times\text{supp}(F_M)$ one can easily check that if $h_m,h_l\in B_j$ then
\begin{align*}
L_k(F) &= \prod_{i=1}^{M} L_{|B_i|}(F_i),\\
L_{k;m}(F) &= \bigg(\prod_{\substack{i=1}}^{M}L_{|B_i|}^{(0)}(F_i)\bigg)\frac{L_{|B_j|:m}(F_j)}{L_{|B_j|}(F_j)}, \\
L_{k;m,l}(F) &= \bigg(\prod_{\substack{i=1}}^{M}L_{|B_i|}^{(0)}(F_i)\bigg)\frac{L_{|B_j|:m,l}(F_j)}{L_{|B_j|}(F_j)}. 
\end{align*}
\end{proof}

With the following lemma we will be in a position to prove Theorem \ref{theo:intro1.1}.

\begin{lemma}[Evaluation of sieve functionals]\label{lemma:functiondef}
Let $F(t_1,\ldots,t_k)=\prod_{i=1}^{k}g(kt_i)$ where 
$$
g(t)=
\begin{cases}
\frac{1}{1+\frac{t}{\beta}},\,\,\,&\text{if $t\leq \beta$,} \\
0,\,\,\,&\text{otherwise.}
\end{cases}
$$
Then for any $m,l$ we have
\begin{align*}
\frac{L_{k;m}(F)}{L_k(F)} &= \frac{\pi^2}{\pi+2}\cdot \sqrt{\frac{\beta}{k}}, \\
\frac{L_{k:m,l}(F)}{L_k(F)} &= \bigg(\frac{\pi^2}{\pi+2}\bigg)^2\cdot \frac{\beta}{k}.  \\
\end{align*}
\end{lemma}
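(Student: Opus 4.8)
The plan is to exploit the product structure of $F$ to collapse each functional into a power of just two one-dimensional integrals, and then evaluate those integrals by a single substitution.

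\textbf{Step 1: factorisation.} Since $F(x_1,\dots,x_k)=\prod_{i=1}^k g(kx_i)$ is a product of functions of the individual coordinates, Fubini's theorem applies to each of $L_k(F)$, $L_{k;m}(F)$, $L_{k;m,l}(F)$. Setting
\[
I_1=\int_0^1 g(kx)\,\frac{dx}{\sqrt x},\qquad I_2=\int_0^1 g(kx)^2\,\frac{dx}{\sqrt x},
\]
I would record that $L_k(F)=I_2^{\,k}$; that the inner integral appearing in $L_{k;m}(F)$ equals $\int_0^1 F\,\frac{dx_m}{\sqrt{x_m}}=I_1\prod_{i\neq m}g(kx_i)$, whence $L_{k;m}(F)=I_1^2 I_2^{\,k-1}$; and similarly $L_{k;m,l}(F)=I_1^4 I_2^{\,k-2}$. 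Dividing,
\[
\frac{L_{k;m}(F)}{L_k(F)}=\frac{I_1^2}{I_2},\qquad \frac{L_{k;m,l}(F)}{L_k(F)}=\Bigl(\frac{I_1^2}{I_2}\Bigr)^2,
\]
so the two claimed identities are equivalent to the single computation $I_1^2/I_2=\tfrac{\pi^2}{\pi+2}\sqrt{\beta/k}$, and in particular the second ratio is automatically the square of the first.

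\textbf{Step 2: the one-dimensional integrals.} Both $g(kx)$ and $g(kx)^2$ are supported on $0\le x\le\beta/k$, where $g(kx)=(1+kx/\beta)^{-1}$. The substitution $x=(\beta/k)u^2$ sends $g(kx)$ to $(1+u^2)^{-1}$ and $\tfrac{dx}{\sqrt x}$ to $2\sqrt{\beta/k}\,du$, with $u$ ranging over $[0,1]$, so
\begin{align*}
I_1&=2\sqrt{\tfrac{\beta}{k}}\int_0^1\frac{du}{1+u^2}=2\sqrt{\tfrac{\beta}{k}}\cdot\frac{\pi}{4}=\frac{\pi}{2}\sqrt{\tfrac{\beta}{k}},\\
I_2&=2\sqrt{\tfrac{\beta}{k}}\int_0^1\frac{du}{(1+u^2)^2}=2\sqrt{\tfrac{\beta}{k}}\cdot\frac{\pi+2}{8}=\frac{\pi+2}{4}\sqrt{\tfrac{\beta}{k}},
\end{align*}
where the second integral is evaluated from the antiderivative $\tfrac12\bigl(\tfrac{u}{1+u^2}+\arctan u\bigr)$. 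Hence $I_1^2/I_2=\tfrac{\pi^2}{\pi+2}\sqrt{\beta/k}$, and feeding this into Step~1 gives both identities.

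\textbf{On difficulty.} There is no substantial obstacle; the computation is short and elementary. The only points worth verifying are that the integrals converge at $x=0$ (immediate, as $g$ is bounded and $x^{-1/2}\in L^1$ near $0$), that $\mathrm{supp}(F)\subseteq R_k$ so that Proposition~\ref{prop:sieve} genuinely applies (each coordinate satisfies $x_i\le\beta/k$, hence $\sum_i x_i\le\beta\le 1$), and that $L_k(F),L_{k;m}(F),L_{k;m,l}(F)$ are all strictly positive since $I_1,I_2>0$, so the non-vanishing hypothesis of Proposition~\ref{prop:sieve} holds.
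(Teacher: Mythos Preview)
Your proof is correct and follows essentially the same approach as the paper: both exploit the product structure of $F$ to factor the functionals completely into powers of the two one-dimensional integrals $\int_0^{\beta/k}\frac{dt}{\sqrt{t}(1+kt/\beta)}=\tfrac{\pi}{2}\sqrt{\beta/k}$ and $\int_0^{\beta/k}\frac{dt}{\sqrt{t}(1+kt/\beta)^2}=\tfrac{\pi+2}{4}\sqrt{\beta/k}$, and then take ratios. Your write-up is slightly more explicit about the factorisation and adds helpful remarks on convergence, support, and non-vanishing, but the argument is the same.
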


\begin{proof}
The definition implies $F$ is supported on the cube $[0,\frac{\beta}{k}]^k\subseteq R_{k,\beta}.$ In this case the functionals factorise completely and the lemma follows from the fact
\begin{align*}
\int_0^{\frac{\beta}{k}}\frac{\mathrm{d}t}{\sqrt{t}(1+\frac{kt}{\beta})^2}&=\frac{\pi+2}{4}\cdot \sqrt{\frac{\beta}{k}}, \\
\int_0^{\frac{\beta}{k}}\frac{\mathrm{d}t}{\sqrt{t}(1+\frac{kt}{\beta})}&=\frac{\pi}{2}\cdot \sqrt{\frac{\beta}{k}}.
\end{align*}
\end{proof}

\begin{remark}
We have restricted the support of our functions to the cube $[0,\frac{\beta}{k}]^k\subseteq R_{k,\beta}$ so that the integrals can be evaluated exactly. This essentially means we are using weights of similar strength to the original GPY weights~(cf. \cite{GPY}). For the half-dimensional case one can show that for large $k$ these weights are essentially optimal. (In particular, following a similar optimisation process as in~\hbox{\cite[Section 7]{Maynard}} one arrives at the same results as above.)
\end{remark}

We are now in a position to prove Theorem \ref{theo:intro1.1}.
\begin{proof}[Proof of Theorem \ref{theo:intro1.1}] Let $\mathcal{H}=\{h_1,h_2,\ldots\}$ be a fixed admissible set. Fix real numbers $\theta_1,\theta_2$ subject to $0<\theta_1+\theta_2 < 1/18$ and define the constant
\begin{equation*}
\Delta= \Delta(\theta_1,\theta_2) = \frac{\sqrt{2}(\pi+2)}{32\pi}\frac{1+\theta_1}{\sqrt{\theta_1\theta_2}}.
\end{equation*}
With notation as above, consider a partition $\mathcal{H}=B_1\cup B_2\cup \ldots$ where $k_1>2\Delta^3$ and for $i\geq 2$ we choose $B_{i}$ such that $k_i > 2^{7i}$. By Proposition \ref{prop:2ndmoment} we will be done if we can show, for every $M\geq 1,$ the inequality
\begin{equation}\label{eq:variance}
\sum_{\substack{N\leq n<2N \\ n\equiv v_0\,\,(\text{mod}\,\,W) \\ n\equiv 1\,\,(\text{mod}\,\,4)}}\bigg[\min_{j=1,\ldots,M}\frac{\mu_j^2}{t_j^2}-\sum_{i=1}^{M}\bigg(\frac{\sum_{h\in B_i}\rho(n+h)-\mu_i}{t_i}\bigg)^2\bigg]\bigg(\sum_{\substack{d_1,\ldots,d_k \\ d_i|n+h_i\,\,\forall i}}\lambda_{d_1,\ldots,d_k}\bigg)^2 >0
\end{equation}
for all sufficiently large $N$ and some choice of real numbers $\mu_i,t_i \geq 1.$ Choose weights $\lambda_{d_1,\ldots,d_k}$ as in Proposition \ref{prop:sieve}, and let $F(x_1,\ldots,x_k)=\prod_{i=1}^{M}F_i(x_{k_0+\ldots+k_{i-1}+1},\ldots,x_{k_0+\ldots+k_i})$ where each $F_i$ is supported on $R_{B_i,2^{-i}}.$ Let $F_i(x_{k_0+\ldots+k_{i-1}+1},\ldots,x_{k_0+\ldots+k_i}) = \prod_{j=k_0+\ldots+k_{i-1}+1}^{k_0+\ldots+k_i}g(k_ix_j)$ where for $j\in \{b_{2i-1},\ldots,{b_{2i}}\}$ we define
\begin{equation*}
g(x_j)=
\begin{cases}
\frac{1}{1+2^ix_j},\,\,\,&\text{if $x_j\leq 2^{-i}$} \\
0,\,\,\,&\text{otherwise}
\end{cases}
\end{equation*}
Expanding out (\ref{eq:variance}), we have the evaluate the expression
\begin{equation*}
\bigg(\min_{j=1,\ldots,M}\frac{\mu_j^2}{t_j^2}\bigg) S_1 - \sum_{i=1}^{M} \frac{1}{t_i^2}\bigg[\sum_{\substack{h,h'\in B_i \\ h\neq h'}}S_3^{(h,h')} + \sum_{h\in B_i} S_4^{(h)} -  2\mu_i\sum_{h\in B_i}S_2^{(h)}+\mu_i^2S_1\bigg]
\end{equation*}
(where by abuse of notation we have written $S_2^{(h)}$ for $S_2^{(i)}$ where $h=h_i$ say). A convenient choice of $\mu_i, \lambda_i$ is 
\begin{align*}
\mu_i &=c\bigg(\frac{k_i}{2^i}\bigg)^{\frac{1}{2}},\,\,\,\,\,t_i =c\bigg(\frac{k_i}{2^i}\bigg)^{\frac{1}{3}},
\end{align*}
where
\begin{equation*}
c = c(\theta_1,\theta_2) = \frac{16\sqrt{\theta_2/2\theta_1}}{\pi}\bigg(\frac{\pi^2}{\pi+2}\bigg).
\end{equation*}
Evaluating these sums using Proposition \ref{prop:sievemod} and Lemma \ref{lemma:functiondef} we see that this is asymptotically
\begin{align*} 
&\frac{B^k N}{4W}\bigg(\prod_{i=1}^{M}L_{|B_i|}^{(0)}(F)\bigg) \bigg\{ \bigg(\frac{k_1}{2}\bigg)^{\frac{1}{3}} - \sum_{i=1}^{M}\bigg[\Delta\bigg(\frac{2^i}{k_i}\bigg)^{\frac{1}{6}} -\frac{1}{2^i} \bigg(\frac{2^i}{k_i}\bigg)^{\frac{2}{3}}\bigg]  \bigg\}.
\end{align*}
Hence (\ref{eq:variance}) will be satisfied for all sufficiently large $N$ provided
\begin{equation}\label{eq:rand10}
\Delta \sum_{i=1}^{M}\bigg(\frac{2^i}{k_i}\bigg)^{\frac{1}{6}} < \bigg(\frac{k_1}{2}\bigg)^{\frac{1}{3}}. 
\end{equation}
But now our choice of bins ensures that 
\begin{equation*}
\Delta \sum_{i=1}^{M}\bigg(\frac{2^i}{k_i}\bigg)^{\frac{1}{6}}  \leq \Delta \sum_{i=1}^{M}\frac{1}{2^i} \leq \Delta < \bigg(\frac{k_1}{2}\bigg)^{\frac{1}{3}},
\end{equation*}
and so (\ref{eq:rand10}) is satisfied for all $M \geq 1.$
\end{proof}

It remains to prove Proposition \ref{prop:sieve}. Each sum can be treated similarly. The following lemma handles all of them at once. First, given a function $F$ satisfying the hypotheses of Proposition~\ref{prop:sieve}, we define
\begin{equation}
F_{\text{max}} = \sup_{(t_1,\ldots,t_k)\in[0,1]^k}\bigg(|F(t_1,\ldots,t_k)|+\sum_{i=1}^{k}|\frac{\partial F}{\partial t_i}(t_1,\ldots,t_k)|\bigg).
\end{equation}

The lemma can now be stated as follows.

\begin{lemma}[General sieve lemma]\label{lemma:generalsieve}
Let $J\subseteq\{1,\ldots,k\}$ (possibly empty) and $p_1,p_2\in \mathbb{P}\cup\{1\}$ be fixed. Write $I=\{1,\ldots,k\}\backslash J.$ Define the sieve sum $S_{J,p_1,p_2,m}=S_{J,p_1,p_2,m,f,g}$ by
\begin{equation*}\label{eq:1}
S_{J,p_1,p_2,m}=\sum_{\substack{d_1,\ldots,d_k \\ e_1,\ldots,e_k \\ W,[d_1,e_1],\ldots,[d_k,e_k]\text{ coprime} \\ p_1|d_m,p_2|e_m}}\lambda_{d_1,\ldots,d_k}\lambda_{e_1,\ldots,e_k} \prod_{i\in I}f([d_i,e_i])\prod_{j\in J}g([d_j,e_j]),
\end{equation*}
with weights $\lambda_{d_1,\ldots,d_k}$ defined as in Proposition~\ref{prop:sieve}. If $J=\emptyset$ we define $f(p)=1/p$ (and there is no dependence on $g$ in the sum). Otherwise, $f$ and $g$ are non-zero multiplicative functions defined on primes by 
\begin{equation*}
f(p) = \frac{1}{p}+O\bigg(\frac{1}{p^2}\bigg),\,\,\,g(p) = \frac{1}{p^2}+O\bigg(\frac{1}{p^3}\bigg),
\end{equation*}
and moreover we assume that $f(p)\neq 1/p.$ We write $S_{J}$ for $S_{J,1,1,m}$. Suppose $\lambda_{d_1,\ldots,d_k}$ satisfy the same hypotheses as in Proposition \ref{prop:sieve}. Then for $|J|\in\{0,1,2\}$ we have the following:
\begin{enumerate}[label=(\roman*)]
	\item If $m\in J$ then 
	\begin{equation*}
	S_{J,p_1,p_2,m} \ll \frac{F_{\text{max}}^2B^{k+|J|}(\log\log{R})^2}{(p_1p_2/(p_1,p_2))^2} .
	\end{equation*}
	\item If $m\notin J$ then 
	\begin{equation*}
	S_{J,p_1,p_2,m} \ll \frac{F_{\text{max}}^2B^{k+|J|}(\log\log{R})^2}{p_1p_2/(p_1,p_2)} .
	\end{equation*}
	\item We have
	\begin{equation*}
	S_{J} = (1+o(1))B^{k+|J|}L_J(F),
	\end{equation*}
	where the integral operators are defined by Proposition \ref{prop:sieve} above, and we write $L_J(F)$ as shorthand for $L_{k;j\in J}(F).$ 
\end{enumerate}
\end{lemma}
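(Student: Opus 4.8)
The plan is to reduce the sieve sum $S_{J,p_1,p_2,m}$ to the auxiliary sums $X_{N,W}$, $Y_{N,W}$, $Z_{N,W}^{(1)}$, $Z_{N,W}^{(2)}$ evaluated in Lemma~\ref{lemma:aux}, by the standard Maynard--Tao diagonalisation procedure adapted to the half-dimensional weights. First I would substitute the definition of $\lambda_{d_1,\ldots,d_k}$ (as a sum over $r_1,\ldots,r_k$ with $d_i\mid r_i$, $p\mid r_i\Rightarrow p\equiv 3\pmod 4$) into $S_{J,p_1,p_2,m}$, and likewise for $\lambda_{e_1,\ldots,e_k}$ with auxiliary variables $s_1,\ldots,s_k$. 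After interchanging the order of summation, the inner sum over $d_i,e_i$ with $d_i\mid r_i$, $e_i\mid s_i$, $d_i\mid n+h_i$ (here encoded through the multiplicative weights $f,g$ on $[d_i,e_i]$) is a multiplicative function of $r_i,s_i$ that one evaluates primewise; the coprimality conditions ``$W,[d_1,e_1],\ldots,[d_k,e_k]$ coprime'' decouple the coordinates. The key algebraic identity is the change of variables that diagonalises the quadratic form: writing $y_{r_1,\ldots,r_k}$ for the appropriate linear combination of the $F$-values, one gets that $S_J$ is (up to the $(1+o(1))$) a sum $\sum \frac{(\cdots)^2}{\prod(\cdots)} F$-data that, after passing to the continuous limit $\log r_i/\log R \to t_i$, becomes exactly the integral $L_J(F)$ with the $\prod dx_i/\sqrt{x_i}$ measure — the half-integer power of $x_i$ being precisely what the Selberg--Delange asymptotics in Lemma~\ref{lemma:aux} produce (the exponent $1/2$ coming from the density of primes $\equiv 3\pmod 4$ and the shape of $g_1,g_2$).

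The cases $|J|=0,1,2$ differ only in which coordinates $i\in I$ carry the ``one-point'' weight $f(p)=1/p+O(1/p^2)$ versus which $j\in J$ carry the ``two-point'' weight $g(p)=1/p^2+O(1/p^3)$. For $j\in J$, the extra factor $g$ versus $f$ shifts the local density and contributes an extra factor of $B$ and replaces an integration $\int_0^1 F\,dx_j/\sqrt{x_j}$ inside-out, matching the nested-integral structure of $L_{k;m}$, $L_{k;m,l}$; this is a bookkeeping exercise once the primewise local factors are computed. For part~(iii) I would carry out the Selberg--Delange / Perron argument only schematically, citing Lemma~\ref{lemma:aux} for the four model sums and noting that the general $S_J$ is a product over the $k$ coordinates of local sums each of which is one of these (or a trivial variant), with the cross terms absorbed into the $o(1)=O(D_0^{-1})$. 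The coprimality to $W$ and the $4W$-trick ensure the ``bad'' primes $p\le D_0$ contribute only the constants $\varphi(W_3)/W_3$ etc.\ already folded into the definition~(\ref{eq:B}) of $B$.

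For the upper bounds (i) and (ii), I would not aim for an asymptotic but instead bound $|\lambda_{d_1,\ldots,d_k}|\ll F_{\max}(\log R)^{k/2}\cdot(\text{multiplicative factor})$ using the support condition $\prod r_i\le R$ and positivity, then estimate $S_{J,p_1,p_2,m}$ by dropping signs and summing the resulting non-negative multiplicative series. The constraints $p_1\mid d_m$, $p_2\mid e_m$ force $p_1p_2/(p_1,p_2)\mid [d_m,e_m]$, and since the $m$-th local factor is $f([d_m,e_m])\asymp 1/[d_m,e_m]$ (if $m\notin J$) or $g([d_m,e_m])\asymp 1/[d_m,e_m]^2$ (if $m\in J$), this produces the claimed saving of $p_1p_2/(p_1,p_2)$ resp.\ its square; the leftover sum over the other coordinates is $\ll B^{k+|J|}(\log\log R)^2$ by the same primewise estimates as in (iii) but now only needing an upper bound, so the $(\log\log R)^2$ slack is harmless.

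The main obstacle I anticipate is part~(iii): getting the \emph{precise} constants (the powers of $\pi$, of $A$, of $L(1,\chi_4)$, and of $\log v$ vs.\ $\log R$) to come out exactly as in Proposition~\ref{prop:sieve} requires careful tracking of the local factors at primes $\equiv 1\pmod 4$ versus $\equiv 3\pmod 4$ through the Selberg--Delange machinery, and in particular reconciling the normalisation of $\rho(n)=t(n)r(n)$ (which brings in $X_{N,W},Z^{(1)},Z^{(2)}$) with the sieve-weight normalisation $B$. Banks--Freiberg--Maynard~\cite{BFM} only needed upper bounds of the right order, so this asymptotic bookkeeping — especially the interaction of the $t(n)$-factor in Hooley's $\rho$ with the sieve sum, which is where $Z_{N,W}^{(1)}$ and $Z_{N,W}^{(2)}$ enter and generate the $\log N/\log v + 1$ term in $S_4^{(m)}$ — is the genuinely delicate part and is presumably what Sections~\ref{section:sievetechnical} and~\ref{section:sieveproofs} are devoted to.
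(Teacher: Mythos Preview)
Your proposal has two genuine gaps.

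\textbf{Parts (i) and (ii): dropping signs does not work.} Bounding $|\lambda_{d_1,\ldots,d_k}|\ll F_{\max}(\log R)^{k/2}$ and then summing the non-negative series $\sum\prod f([d_i,e_i])\prod g([d_j,e_j])$ throws away all of the cancellation that the Maynard weights were designed to produce. A back-of-envelope check: for each $i\in I$ the sum $\sum_{d_i,e_i}f([d_i,e_i])$ over squarefree integers supported on primes $\equiv 3\pmod 4$ is of order $(\log R)^{3/2}$, so the crude bound gives roughly $F_{\max}^2(\log R)^{k+3|I|/2}$, which exceeds the target $F_{\max}^2 B^{k+|J|}\asymp F_{\max}^2(\log R)^{(k+|J|)/2}$ by a factor of $(\log R)^{2|I|}$. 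The paper instead diagonalises \emph{first} (its Proposition~\ref{prop:2}), obtaining
\[
S_{J,p_1,p_2,m}=\sum_{\substack{u_1,\ldots,u_k\\ (u_m,p_1p_2)=1\\ u_j=1\ \forall j\in J}}\frac{y_{u_1,\ldots,u_k}^{(J,p_1,m)}\,y_{u_1,\ldots,u_k}^{(J,p_2,m)}}{\prod_{i\in I}f^*(u_i)}+E,
\]
and only then bounds the transformed vectors $y^{(J,p,m)}$ (its Proposition~\ref{prop:newvecoldvec} and Corollary~\ref{corollary:errorterms}). The saving in $p_1,p_2$ comes from these $y$-bounds, not from the local factor $f$ or $g$ directly.

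\textbf{Part (iii): you have misidentified the relevant auxiliary lemma.} The sums $X_{N,W},Y_{N,W},Z_{N,W}^{(1)},Z_{N,W}^{(2)}$ of Lemma~\ref{lemma:aux} are sums over integers supported on primes $\equiv 1\pmod 4$; they arise from expanding the Hooley damping factor $t(n)$ inside $\rho(n)$, and they are used \emph{upstream} of Lemma~\ref{lemma:generalsieve}, in the reduction of $S_2^{(m)},S_3^{(m,l)},S_4^{(m)}$ to the pure sieve sums $S_J$ (this is the content of the proof that Lemma~\ref{lemma:generalsieve}~$\Rightarrow$ Proposition~\ref{prop:sieve}). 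The sums $S_J$ themselves contain no trace of $\rho$ or $t(n)$: they involve only the weights $\lambda_{d_1,\ldots,d_k}$, which are supported on primes $\equiv 3\pmod 4$. Their evaluation uses a different technical lemma (the paper's Lemma~\ref{key}, a half-dimensional version of the GPY summation lemma from~\cite{GPY2}) applied to sums of the shape
\[
\sum_{\substack{d\le R\\(d,W)=1\\ p\mid d\Rightarrow p\equiv 3\,(\mathrm{mod}\,4)}}\mu^2(d)h(d)\,G\Big(\frac{\log d}{\log R}\Big),
\]
with $h(p)=1/p+O(1/p^2)$. The factor $B$ and the $dx/\sqrt{x}$ measure both come from this lemma, not from Lemma~\ref{lemma:aux}. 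So your concern in the last paragraph about ``reconciling the normalisation of $\rho(n)$ with the sieve-weight normalisation $B$'' is misplaced: that reconciliation happens outside Lemma~\ref{lemma:generalsieve}, and inside it there is no $\rho$ at all.
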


We now show how this implies Proposition \ref{prop:sieve}.

\begin{proof}[Proof of (Lemma \ref{lemma:generalsieve} $\Rightarrow$ Proposition \ref{prop:sieve})]
We consider each sum in turn. First we note that using the definition of $\lambda_{d_1,\ldots,d_k},$ the exact same calculation as in \cite[p.~394]{Maynard} gives
\begin{equation*}
\sup_{d_1,\ldots,d_k}|\lambda_{d_1,\ldots,d_k}| \ll F_{\text{max}} \sum_{\substack{u\leq R \\ p|u\Rightarrow p\equiv 3\,\,(\text{mod}\,\,4)}}\frac{\mu^2(u)\tau_k(u)}{\varphi(u)} \ll F_{\text{max}} (\log{R})^{\frac{k}{2}},
\end{equation*}
and so we have a trivial bound\footnote{By Rankin's trick we have 
$$\sum_{\substack{d\leq R \\ p|d\Rightarrow p\equiv 3\,\,(\text{mod}\,\,4) }} \tau_k(d) \leq R\sum_{\substack{d\leq R \\ p|d\Rightarrow p\equiv 3\,\,(\text{mod}\,\,4) }} \frac{\tau_k(d)}{d} \ll R(\log{R})^{k/2}.$$
}
\begin{align} \nonumber
\sum_{\substack{d_1,\ldots,d_k \\ e_1,\ldots, e_k}}|\lambda_{d_1,\ldots,d_k}\lambda_{e_1,\ldots,e_k}| &\ll \bigg(\sup_{d_1,\ldots,d_k}|\lambda_{d_1,\ldots,d_k}|\bigg)^2 \bigg(\sum_{\substack{d\leq R \\ p|d\Rightarrow p\equiv 3\,\,(\text{mod}\,\,4) }} \tau_k(d)\bigg)^2 \\ \label{eq:trivial}
&\ll F_{\text{max}}^2R^2(\log{R})^{2k} \ll_{\epsilon} F_{\text{max}}^2N^{\theta_2+\epsilon}.
\end{align}
As mentioned in Section~4, because we can obtain power-saving in the error terms for the formulae stated there, this trivial bound will suffice for our purposes.
\\
\begin{enumerate}[label=(\roman*), wide, labelwidth=!, labelindent=0pt]
	\item Rewrite $S_1$ in the form
	\begin{equation*}
	S_1 = \sum_{\substack{d_1,\ldots,d_k \\ e_1,\ldots, e_k}}\lambda_{d_1,\ldots,d_k}\lambda_{e_1,\ldots,e_k}\sum_{\substack{N\leq n<2N \\ n\equiv v_0\,\,(\text{mod}\,\,W) \\ n\equiv 1\,\,(\text{mod}\,\,4) \\ n\equiv -h_i\,\,(\text{mod}\,\,[d_i,e_i])}}1.
	\end{equation*}
	We may assume $W,[d_1,e_1],\ldots,[d_k,e_k]$ are pairwise coprime, as otherwise the inner sum is empty. In this case, by the Chinese Remainder Theorem, these congruences are equivalent to a single congruence (mod $q$) where $q=4W\prod_{i=1}^{k}[d_i,e_i]$. The inner sum evaluates to 
	$$\frac{N}{q}+O(1).$$
	The error term contributes $O_{\epsilon}(F_{\text{max}}^2N^{\theta_2+\epsilon})$ which is negligible. The main term is 	\begin{equation*}
	\frac{N}{4W} \sum_{\substack{d_1,\ldots,d_k \\ e_1,\ldots, e_k \\ W,[d_1,e_1],\ldots,[d_k,e_k]\text{ coprime}}}\lambda_{d_1,\ldots,d_k}\lambda_{e_1,\ldots,e_k} \prod_{i=1}^{k}\frac{1}{[d_i,e_i]}.
	\end{equation*}
	This is of the form $S_{J}$ where $|J|=0$. Evaluating it according to Lemma~\ref{lemma:generalsieve} we obtain
	\begin{equation*}
	S_1 = (1+o(1))\frac{B^kN}{4W}L_k^{(0)}(F).
	\end{equation*}
	\\ 
	\item Rewrite $S_2^{(m)}$ in the form
	\begin{equation*}
	S_2^{(m)} = \sum_{\substack{d_1,\ldots,d_k \\ e_1,\ldots, e_k}}\lambda_{d_1,\ldots,d_k}\lambda_{e_1,\ldots,e_k} \sum_{\substack{N\leq n < 2N \\ n\equiv v_0\,\,(\text{mod}\,\,W) \\ n\equiv 1\,\,(\text{mod}\,\,4) \\ n\equiv -h_i\,\,(\text{mod}\,\,[d_i,e_i])\,\,\forall i}}\rho(n+h_m).
	\end{equation*}
	By definition of $\rho(n+h_m)$ this is equal to
	\begin{equation*}
	\frac{1}{\log{v}}\sum_{\substack{d_1,\ldots,d_k \\ e_1,\ldots, e_k}}\lambda_{d_1,\ldots,d_k}\lambda_{e_1,\ldots,e_k} \sum_{\substack{a\leq v \\ p|a\Rightarrow p\equiv 1\,\,(\text{mod}\,\,4)}} \frac{\mu(a)}{g_2(a)}\log{\frac{v}{a}}\sum_{\substack{N\leq n < 2N \\ n\equiv v_0\,\,(\text{mod}\,\,W) \\ n\equiv 1\,\,(\text{mod}\,\,4) \\ n\equiv -h_i\,\,(\text{mod}\,\,[d_i,e_i])\,\,\forall i \\ n\equiv -h_m\,\,(\text{mod}\,\,a)}}r(n+h_m).
	\end{equation*}
	From considering the support of $\lambda_{d_1,\ldots,d_k}$ we see that for non-zero contribution we may assume $W,[d_1,e_1],\ldots,[d_k,e_k]$ and $a$ are pairwise coprime. In this case the inner sum can be evaluated according to Lemma~\ref{lemma:1}, taking $q=W\prod_{i\neq m}[d_i,e_i]$ and $d=a[d_m,e_m]$. As $q \ll WR^2 \ll_{\epsilon} N^{\theta_2+\epsilon}$ and $d \ll vR^2 \ll N^{\theta_1+\theta_2}$ we see that the inner sum evaluates to 
	$$\frac{g_1(q)g_2(d)}{2qd}\pi N +O_{\epsilon}(N^{\frac{1}{3}+\frac{1}{2}(\theta_1+\theta_2)+\epsilon}).$$
	Bounding the sum over $a$ trivially by $v\log{v}$ and using (\ref{eq:trivial}), we see the error term contributes $O_{\epsilon}(N^{\frac{1}{3}+\frac{3}{2}(\theta_1+\theta_2)+\epsilon})$ which is negligible. We obtain a main term 
	\begin{align*}
	&\frac{X_{N,W}g_1(W)\pi N}{2W\log{v}}\sum_{\substack{d_1,\ldots,d_k \\ e_1,\ldots, e_k \\ W,[d_1,e_1],\ldots,[d_k,e_k]\text{ coprime}}}\lambda_{d_1,\ldots,d_k}\lambda_{e_1,\ldots,e_k}\prod_{\substack{i\neq m}}\frac{g_1([d_i,e_i])}{[d_i,e_i]} \frac{1}{[d_m,e_m]^2},
	\end{align*}
	where we have defined
	\begin{equation*}
	X_{N,W}=\sum_{\substack{a\leq v \\ (a,W)=1 \\ p|a\Rightarrow p\equiv 1\,\,(\text{mod}\,\,4)}} \frac{\mu(a)}{a}\log{\frac{v}{a}}
	\end{equation*}
	as in (\ref{eq:X}). The sieve sum above is of the form $S_{J}$ with $|J|=1.$ We can evaluate this by Lemma~\ref{lemma:generalsieve} to obtain
	\begin{equation*}
	S_2^{(m)} = (1+o(1))\frac{X_{N,W}g_1(W)\pi B^{k+1}N}{2W\log{v}}L_{k;m}^{(1)}(F).
	\end{equation*}
	Recalling the definition of $B$ in (\ref{eq:B}), evaluating $X_{N,W}$ according to Lemma~\ref{lemma:aux}, and using the fact
	\begin{equation*}
	\frac{g_1(W_3)\varphi(W_3)}{W_3} = \prod_{\substack{p|W_3}}\bigg(1-\frac{1}{p^2}\bigg) = \frac{1}{2A^2}+O(D_0^{-1}),
	\end{equation*}
	we obtain
	\begin{equation*}
	S_2^{(m)} = (1+o(1))\frac{4 \sqrt{\frac{\log{R}}{\log{v}}}B^kN}{\pi W}L_{k;m}^{(1)}(F).
	\end{equation*}
	\\ 
	\item Rewrite  $S_3^{(m,l)}$ in the form
	\begin{equation*}
	S_3^{(m,l)}=\sum_{\substack{d_1,\ldots,d_k \\ e_1,\ldots, e_k}}\lambda_{d_1,\ldots,d_k}\lambda_{e_1,\ldots,e_k} \sum_{\substack{N\leq n <2N \\ n\equiv v_0\,\,(\text{mod}\,\,W) \\ n\equiv 1\,\,(\text{mod}\,\,4) \\ n\equiv -h_i\,\,(\text{mod}\,\,[d_i,e_i])\,\,\forall i}}\rho(n+h_{m})\rho(n+h_l).
	\end{equation*}

Expanding out the definition of $\rho$ this is 

\begin{align*}
\frac{1}{\log^2{v}}\sum_{\substack{d_1,\ldots,d_k \\ e_1,\ldots, e_k}}\lambda_{d_1,\ldots,d_k}&\lambda_{e_1,\ldots,e_k}\sum_{\substack{a,b\leq v \\ p|a,b\Rightarrow p\equiv 1\,\,(\text{mod}\,\,4)}} \frac{\mu(a)\mu(b)}{g_2(a)g_2(b)}\log{\frac{v}{a}}\log{\frac{v}{b}} \\
&\cdot\sum_{\substack{N\leq n <2N \\ n\equiv v_0\,\,(\text{mod}\,\,W) \\ n\equiv 1\,\,(\text{mod}\,\,4) \\ n\equiv -h_i\,\,(\text{mod}\,\,[d_i,e_i])\,\,\forall i \\ n\equiv -h_{m}\,\,(\text{mod}\,\,a) \\ n\equiv -h_{l}\,\,(\text{mod}\,\,b)}}r(n+h_{m})r(n+h_l).
\end{align*}

Similarly to the above, for non-zero contribution we may restrict to the case $W,[d_1,e_1],\ldots,[d_k,e_k],a,b$ are pairwise coprime (note that the last two congruences are solvable if and only if $(a,b)|h_l-h_m,$ and in the case $(a,2W)=(b,2W)=1$ this is true if and only if $(a,b)=1$). We evaluate the inner sum according to Lemma~\ref{lemma:2}, taking $q=W\prod_{i\neq m,l}[d_i,e_i],$ $d_1 = a[d_m,e_m]$ and $d_2=b[d_l,e_l].$ We note that 
$q\ll_{\epsilon} N^{\theta_2+\epsilon}$ and $d_1,d_2\ll N^{\theta_1+\theta_2}.$ Using the fact $\theta_1+\theta_2<1/18,$ we see the second error term in the definition of $R_2(N;d_1,d_2,q)$ dominates, and so the inner sum evaluates to 
$$\frac{g_1(q)^2\Gamma(d_1,d_2,q)}{q}\pi^2N+ O_{\epsilon}(N^{\frac{5}{6}+\theta_1+\theta_2+\epsilon}).$$
Bounding the rest of the sum trivially, we obtain a total error of size $O_{\epsilon}(N^{\frac{5}{6}+3\theta_1+2\theta_2+\epsilon})$ which, again, is negligible in the range $\theta_1+\theta_2<1/18.$ We obtain a main term
\begin{align*}
&\frac{g_1(W)^2\pi^2N}{W\log^{2}{v}}\sum_{\substack{a,b\leq v \\ p|a,b\Rightarrow p\equiv 1\,\,(\text{mod}\,\,4)}} \frac{\mu(a)\mu(b)}{g_2(a)g_2(b)}\log{\frac{v}{a}}\log{\frac{v}{b}} \\ 
&\cdot\sum_{\substack{d_1,\ldots,d_k \\ e_1,\ldots, e_k \\ W,[d_1,e_1],\ldots,[d_k,e_k]\text{ coprime}}}\lambda_{d_1,\ldots,d_k}\lambda_{e_1,\ldots,e_k}\prod_{i\neq m,l}\frac{g_1([d_i,e_i])^2}{[d_i,e_i]} \Gamma([d_m,e_m]a,[d_l,e_l]b,W\prod_{i\neq m,l}[d_i,e_i]).
\end{align*}
For arbitrary (square-free) moduli $d_1,d_1$ and $q,$ we can write $\Gamma(d_1,d_2,q)$ as a product over primes (cf.~ the definition of $\Gamma(d_1,d_2,q)$ given in Lemma~\ref{lemma:2} and note that we are summing a multiplicative function). By considering the Euler-product and the various support restrictions on the variables $d_i,e_i,a,b$, one can write $\Gamma([d_m,e_m]a,[d_l,e_l]b,W\prod_{i\neq m,l}[d_i,e_i])$ in the form
\begin{align*} 
\prod_{p\nmid 2W}\bigg(1-\frac{1}{p^2}\bigg)^{-1}\frac{g_2(a)g_2(b)}{g_7(a)g_7(b)}\prod_{i\neq m,l} \frac{[d_i,e_i]}{g_1([d_i,e_i]\varphi([d_i,e_i])} \prod_{j=m,l} \frac{1}{[d_j,e_j]\varphi([d_j,e_j])},
\end{align*}
leaving us with a main term
\begin{align*}
&\prod_{p\nmid 2W}\bigg(1-\frac{1}{p^2}\bigg)^{-1}\frac{g_1(W)^2Y_{N,W}\pi^2N}{W\log^{2}{v}} \\
&\cdot\sum_{\substack{d_1,\ldots,d_k \\ e_1,\ldots, e_k \\ W,[d_1,e_1],\ldots,[d_k,e_k]\text{ coprime}}}\lambda_{d_1,\ldots,d_k}\lambda_{e_1,\ldots,e_k}\prod_{i\neq m,l}\frac{g_1([d_i,e_i])}{\varphi([d_i,e_i])}\prod_{j=m,l} \frac{1}{[d_j,e_j]\varphi([d_{j},e_{j}])}.
\end{align*}
Here we have defined
\begin{equation*}	
Y_{N,W}=\sum_{\substack{a,b\leq v\\ (a,W)=(b,W)=1 \\ (a,b)=1 \\ p|a,b \Rightarrow p\equiv 1\,\,(\text{mod}\,\,4)}} \frac{\mu(a)\mu(b)}{g_7(a)g_7(b)}\log{\frac{v}{a}}\log{\frac{v}{b}}
\end{equation*}
as in (\ref{eq:Y}). The main term is of the form $S_J$ for $|J|=2.$ By Lemma~\ref{lemma:generalsieve} it can be evaluated as
\begin{equation*}
S_{3}^{(l,m)} = (1+o(1))\frac{Y_{N,W}g_1(W)^2\pi^2B^{k+2}N}{W\log^2{v}}L_{k;m,l}^{(2)}(F),
\end{equation*}
where we have written 
\begin{equation*}
\prod_{p\nmid 2W}\bigg(1-\frac{1}{p^2}\bigg)^{-1}=1+O(D_0^{-1}).
\end{equation*}
Evaluating $Y_{N,W}$ as in Lemma \ref{lemma:aux}, this simplifies to 
\begin{equation*}
S_{3}^{(l,m)} = (1+o(1))\frac{64(\frac{\log{R}}{\log{v}}) B^{k}N}{\pi^2 W}L_{k;m,l}^{(2)}(F).
\end{equation*}
\\
	\item Rewrite $S_4^{(m)}$ in the form
	\begin{equation*}
	\sum_{\substack{d_1,\ldots,d_k \\ e_1,\ldots,e_k}}\lambda_{d_1,\ldots,d_k}\lambda_{e_1,\ldots,e_k}\sum_{\substack{N\leq n <2N \\ n\equiv v_0\,\,(\text{mod}\,\,W) \\ n\equiv 1\,\,(\text{mod}\,\,4) \\ n\equiv -h_i\,\,(\text{mod}\,\,[d_i,e_i])\,\,\forall i}}\rho^2(n+h_m).
	\end{equation*}
Expanding out the definition of $\rho^2(n)$ we see this is equal to 
\begin{equation*}
\frac{1}{\log^2{v}}\sum_{\substack{d_1,\ldots,d_k \\ e_1,\ldots,e_k}}\lambda_{d_1,\ldots,d_k}\lambda_{e_1,\ldots,e_k}\sum_{a,b\leq v}\frac{\mu(a)\mu(b)}{g_2(a)g_2(b)}\log{\frac{v}{a}}\log{\frac{v}{b}}\sum_{\substack{N\leq n <2N \\ n\equiv v_0\,\,(\text{mod}\,\,W) \\ n\equiv 1\,\,(\text{mod}\,\,4) \\ n\equiv -h_i\,\,(\text{mod}\,\,[d_i,e_i])\,\,\forall i \\ n\equiv -h_m\,\,(\text{mod}\,\,[a,b])}}r^2(n+h_m).
\end{equation*}
Again, we may restrict to the case $W,[d_1,e_1],\ldots,[d_k,e_k],[a,b]$ are pairwise coprime. In this case the inner sum can be evaluated according to Lemma \ref{lemma:3}, taking $q=W\prod_{i\neq m}[d_i,e_i]$ and $d=[a,b][d_m,e_m].$ We note that $q\ll_{\epsilon} N^{\theta_2+\epsilon}$ and $d\ll N^{2\theta_1+\theta_2},$ and so the inner sum becomes
\begin{equation*}
\frac{g_3(q)g_4(d)}{qd}\bigg(\log{N}+A_2+2\sum_{p|q}g_5(p)-2\sum_{p|d}g_6(p)\bigg)N+O_{\epsilon}(N^{\frac{3}{4}+\theta_2+\epsilon}).
\end{equation*}
Bounding the rest of the sum trivially, we see the error term contributes $O_{\epsilon}(N^{\frac{3}{4}+2(\theta_1+\theta_2)+\epsilon})$ which is small. 
For the main term, let
\begin{align*}
Z_{N,W}^{(1)} &= \sum_{a,b\leq v}\frac{\mu(a)\mu(b)g_4([a,b])}{g_2(a)g_2(b)[a,b]}\log{\frac{v}{a}}\log{\frac{v}{b}},\\
Z_{N,W}^{(2)} &= \sum_{a,b\leq v}\frac{\mu(a)\mu(b)g_4([a,b])}{g_2(a)g_2(b)[a,b]}\log{\frac{v}{a}}\log{\frac{v}{b}} \sum_{p|[a,b]}g_6(p)
\end{align*}
be as in~(\ref{eq:Z1}) and~(\ref{eq:Z2}). We can express $S_4^{(m)}=\Lambda_1+\Lambda_2+\Lambda_3+\Lambda_4$ where
\begin{align*} 
\Lambda_1 &= \frac{g_3(W)Z_{N,W}^{(1)}N}{W\log^2{v}}\bigg(\log{N}+A_2+2\sum_{p|W}g_5(p)\bigg)T,\\
\Lambda_2 &= \frac{2g_3(W)Z_{N,W}^{(1)}N}{W\log^2{v}} \sum_{i\neq m} \sum_{\substack{D_0<p\leq v \\ p\equiv 3\,\,(\text{mod}\,\,4)}}g_5(p)T^{(p,i)}, \\
\Lambda_3 &= -\frac{2g_3(W)Z_{N,W}^{(1)}N}{W\log^2{v}}\sum_{\substack{D_0<p\leq v \\ p\equiv 3\,\,(\text{mod}\,\,4)}}g_6(p)T^{(p,m)}, \\
\Lambda_4 &=-\frac{2g_3(W)Z_{N,W}^{(2)}N}{W\log^2{v}}T 
\end{align*}
and
\begin{align*}
T &= \sum_{\substack{d_1,\ldots,d_k \\ e_1,\ldots,e_k \\ W,[d_1,e_1],\ldots,[d_k,e_k]\text{ coprime}}}\lambda_{d_1,\ldots,d_k}\lambda_{e_1,\ldots,e_k}\prod_{i\neq m}\frac{g_1([d_i,e_i])}{[d_i,e_i]}\frac{1}{[d_m,e_m]^2}, \\
T^{(p,i)} &= \sum_{\substack{d_1,\ldots,d_k \\ e_1,\ldots,e_k \\ p|[d_i,e_i] \\ W,[d_1,e_1],\ldots,[d_k,e_k]\text{ coprime}}}\lambda_{d_1,\ldots,d_k}\lambda_{e_1,\ldots,e_k}\prod_{i\neq m}\frac{g_1([d_i,e_i])}{[d_i,e_i]}\frac{1}{[d_m,e_m]^2}.
\end{align*}
$T$ is of the form $S_{J}$ for $|J|=1,$ and so by Lemma~\ref{lemma:generalsieve} it can be evaluated as
\begin{equation*}
T = (1+o(1))B^{k+1}L_{k;m}^{(1)}(F).
\end{equation*}
To evaluate $T^{(p,i)},$ note by inclusion-exclusion we can write it as 
\begin{equation*}
\bigg(\sideset{}{'}\sum_{\substack{d_1,\ldots,d_k \\ e_1,\ldots,e_k \\ p|d_i }}+\sideset{}{'}\sum_{\substack{d_1,\ldots,d_k \\ e_1,\ldots,e_k \\ p|e_i}}-\sideset{}{'}\sum_{\substack{d_1,\ldots,d_k \\ e_1,\ldots,e_k \\ p|d_i,e_i}}\bigg)\lambda_{d_1,\ldots,d_k}\lambda_{e_1,\ldots,e_k}\prod_{i\neq m}\frac{g_1([d_i,e_i])}{[d_i,e_i]}\frac{1}{[d_m,e_m]^2},
\end{equation*}
where $\sideset{}{'}\sum$ denotes the condition $W,[d_1,e_1],\ldots,[d_k,e_k]$ are pairwise coprime. Thus we see it is of the form $S_{J,p,1,i}+S_{J,1,p,i}-S_{J,p,p,i}$ for $|J|=1.$ By Lemma~\ref{lemma:generalsieve} we conclude
\begin{equation*}
T^{(p,i)} \ll
\begin{cases}
\frac{F_{\text{max}}^2B^{k+1}(\log\log{R})^2}{p},\,\,&\text{if $i\neq m$} \\
\frac{F_{\text{max}}^2B^{k+1}(\log\log{R})^2}{p^2},\,\,&\text{if $i=m$}
\end{cases}
\end{equation*}
Now we note that 
\begin{align*}
\sum_{\substack{D_0<p\leq v \\ p\equiv 3\,\,(\text{mod}\,\,4)}}\frac{g_5(p)}{p} &\ll \sum_{p>D_0} \frac{\log{p}}{p^2} \ll \frac{\log{D_0}}{D_0},\\
\sum_{\substack{D_0<p\leq v \\ p\equiv 3\,\,(\text{mod}\,\,4)}}\frac{g_6(p)}{p^2}&\ll \sum_{p>D_0} \frac{\log{p}}{p^2} \ll \frac{\log{D_0}}{D_0},
\end{align*}
and so, with our choice $D_0 = (\log\log{N})^3,$ the contributions from $\Lambda_2$ and $\Lambda_3$ are negligible. Because
\begin{align*}
\sum_{\substack{p|W}}g_5(p) &\ll \sum_{p<D_0} \frac{\log{p}}{p} \ll \log{D_0},
\end{align*}
we see that the only contribution to the main term comes from the $\Lambda_1$ term corresponding to $\log{N},$ and $\Lambda_4,$ leaving us with
\begin{align*}
S_4^{(m)} = (1+o(1)) \frac{g_3(W)B^{k+1}N}{W\log^2{v}}\bigg[Z_{N,W}^{(1)}\log{N}-2Z_{N,W}^{(2)}\bigg]L_{k;m}^{(1)}(F).
\end{align*}
Evaluating these according to Lemma \ref{lemma:aux}, and using the fact
\begin{align*}
\frac{g_3(W_1)}{g_1(W_1)^3}=\prod_{\substack{p<D_0 \\ p\equiv1\,\,(\text{mod}\,\,4)}}\bigg(1-\frac{1}{p^2}\bigg)^{-1} = \frac{3\zeta(2)}{8A^2}+O(D_0^{-1}) = \frac{\pi^2}{16A^2}+O(D_0^{-1}),
\end{align*}
we obtain
\begin{align*}
S_4^{(m)} = (1+o(1)) \frac{2 \sqrt{\frac{\log{R}}{\log{v}}}(\frac{\log{N}}{\log{v}}+1)B^k N}{\pi W}L_{k;m}^{(1)}(F).
\end{align*}

\end{enumerate}
\vspace{2mm}
This finishes the proof of Proposition \ref{prop:sieve}.
\end{proof}

Thus it remains to establish Lemma~\ref{lemma:generalsieve}. First we require a few technical sieve lemmas. We list these in the following section.

\section{Technical sieve sums lemmas}\label{section:sievetechnical}

In the various sieve calculations that appear in the proof of Lemma~\ref{lemma:generalsieve}, we will frequently encounter sums of the form
\begin{equation*}
\sum_{\substack{n\leq X \\ p|n\Rightarrow p\equiv 3\,\,(\text{mod}\,\,4)}}\mu^2(n)f(n),
\end{equation*}
where $f$ is a multiplicative function satisfying $f(p)=O(1/p).$ We can evaluate sums of this type with the following lemmas.

\begin{lemma}[Technical sieve sum lemma]\label{lemma:technical}
Let $A_1,A_2,L>0.$ Let $\gamma$ be a multiplicative function satisfying the sieve axioms 
$$0\leq \frac{\gamma(p)}{p} \leq 1-A_1,$$
and 
$$-L\leq \sum_{w\leq p \leq z} \frac{\gamma(p)\log{p}}{p} - \frac{1}{2}\log{\frac{z}{w}} < A_2$$
for any $2\leq w\leq z.$ Let $g$ be the totally multiplicative function defined on primes by $g(p) = \frac{\gamma(p)}{p-\gamma(p)}$. Finally, let $G:[0,1]\rightarrow \mathbb{R}$ be a piecewise differentiable function, and let $G_{\text{max}}=\sup_{t\in [0,1]} (|G(t)|+|G'(t)|)$. Then 
\begin{equation*}
\sum_{d<z}\mu^2(d)g(d)G\bigg(\frac{\log{d}}{\log{z}}\bigg) = c_{\gamma} \frac{(\log{z})^{\frac{1}{2}}}{\Gamma(1/2)} \int_0^1 G(x)\frac{\mathrm{d}x}{\sqrt{x}}+O_{A_1,A_2}(c_{\gamma}LG_{\text{max}}(\log{z})^{-\frac{1}{2}}),
\end{equation*}
where
\begin{equation*}
c_{\gamma} = \prod_{p}\bigg(1-\frac{\gamma(p)}{p}\bigg)^{-1}\bigg(1-\frac{1}{p}\bigg)^{\frac{1}{2}}.
\end{equation*}
Here, the implied constant in the Landau notation is independent of $G$ and $L$.
\end{lemma}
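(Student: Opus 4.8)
The plan is to reduce to the case $G\equiv 1$, treat that case by a Landau--Selberg--Delange contour argument for the Dirichlet series attached to $\mu^2(d)g(d)$, and then pass to general $G$ by partial summation.

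First I would record the consequences of the axioms. Since $g$ is totally multiplicative with $g(p)=\gamma(p)/(p-\gamma(p))$ and $\gamma(p)/p\le 1-A_1$, one has $1+g(p)=(1-\gamma(p)/p)^{-1}$, and partial summation in the second axiom gives $\sum_{p\le x}g(p)=\tfrac12\log\log x + C_\gamma + O(L/\log x)$ for a constant $C_\gamma$. Hence the Dirichlet series $D(s)=\sum_{d\ge1}\mu^2(d)g(d)d^{-s}=\prod_p(1+g(p)p^{-s})$ converges for $\Re s>0$ and factors as $D(s)=\zeta(1+s)^{1/2}H(s)$, where $H(s)=D(s)\zeta(1+s)^{-1/2}$ continues holomorphically and is bounded (with a bound depending only on $A_1,A_2$) in a region $\Re s\ge -c/\log(2+|\Im s|)$, with $H(0)=\prod_p(1-\gamma(p)/p)^{-1}(1-1/p)^{1/2}=c_\gamma$; the continuation is obtained in the standard Selberg--Delange manner, using the logarithmic-average form of the axiom rather than pointwise control of $\gamma(p)$. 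Evaluating $V(z):=\sum_{d<z}\mu^2(d)g(d)$ by a truncated Perron formula and shifting the contour onto a Hankel-type path around $s=0$, the singular factor $\zeta(1+s)^{1/2}\sim s^{-1/2}$ produces the main term $c_\gamma(\log z)^{1/2}/\Gamma(3/2)=2c_\gamma(\log z)^{1/2}/\Gamma(1/2)$, the reciprocal Gamma factor arising from $\tfrac{1}{2\pi i}\int_{\mathcal H}e^{w}w^{-3/2}\,dw=1/\Gamma(3/2)$, while the rest of the contour contributes an error $O(c_\gamma L(\log z)^{-1/2})$ controlled by the error parameter in the axiom. This is the $G\equiv1$ instance of the lemma, since $\int_0^1 x^{-1/2}\,dx=2$.

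For general $G$ I would write the sum as a Riemann--Stieltjes integral against the nondecreasing function $V$, namely $\sum_{d<z}\mu^2(d)g(d)G(\tfrac{\log d}{\log z})=\int_{1^-}^{z}G(\tfrac{\log t}{\log z})\,dV(t)$, substitute $V(t)=\tfrac{2c_\gamma}{\Gamma(1/2)}(\log t)^{1/2}+r(t)$ with $r(t)\ll c_\gamma L(\log t)^{-1/2}$, and split. In the main-term integral the substitution $x=\log t/\log z$ turns $\tfrac{2c_\gamma}{\Gamma(1/2)}\int_1^z G(\tfrac{\log t}{\log z})\,d[(\log t)^{1/2}]$ into exactly $\tfrac{c_\gamma(\log z)^{1/2}}{\Gamma(1/2)}\int_0^1 G(x)x^{-1/2}\,dx$. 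In the remainder integral, an integration by parts moves the differential onto $G$; using $|G|,|G'|\le G_{\max}$ and $\int_1^z(\log t)^{-1/2}\tfrac{dt}{t\log z}\ll(\log z)^{-1/2}$ gives a contribution $O(c_\gamma L G_{\max}(\log z)^{-1/2})$, and the initial range of bounded $t$ (where $r(t)=O(1)$) contributes $O(G_{\max}(\log z)^{-1})$, which is absorbed. Assembling the pieces gives the stated identity, and one checks along the way that every implied constant depends only on $A_1,A_2$, never on $G$ or $L$.

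The main obstacle is the base case: getting the holomorphic continuation of $H(s)=D(s)\zeta(1+s)^{-1/2}$ and the quantitative bounds on it from the logarithmically-weighted prime-sum axiom alone --- the Euler product for $H$ need not converge absolutely, so the Selberg--Delange machinery has to be run with care --- together with the bookkeeping needed to keep the error linear in $L$ and all implied constants uniform in $G$. The passage from $G\equiv1$ to general $G$ by partial summation, by contrast, is routine; alternatively, the base-case asymptotic can be quoted from the standard theory of the half-dimensional sieve, in which case only that partial-summation step remains.
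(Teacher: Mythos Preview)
The paper does not prove this lemma at all: it simply cites it as \cite[Lemma~4]{GPY2} (Goldston--Graham--Pintz--Y{\i}ld{\i}r{\i}m), with minor notational changes. So your proposal is not to be compared against an argument in the paper but against that external reference.

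Your reduction from general $G$ to $G\equiv 1$ by partial summation is correct and standard, and is exactly how the weighted version is obtained in the literature. Where your plan diverges from the proof in \cite{GPY2} is the base case. The argument there, which in turn follows Halberstam--Richert, is purely elementary: one works directly with the axiom $\sum_{w\le p\le z}\gamma(p)(\log p)/p=\tfrac12\log(z/w)+O(L)$ and uses a recursive/inductive estimate for $V(z)=\sum_{d<z}\mu^2(d)g(d)$, never invoking complex analysis. This is what makes the linear dependence on $L$ and the uniformity in $A_1,A_2$ transparent.

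The Selberg--Delange route you sketch runs into a genuine difficulty, which you yourself flag. The hypotheses permit $\gamma(p)$ to be as large as $(1-A_1)p$ on individual primes, so $g(p)$ is only bounded, not $O(1/p)$; consequently the Euler product for $H(s)=D(s)\zeta(1+s)^{-1/2}$ need not converge, or even make sense, for $\Re s\le 0$, and there is no obvious way to analytically continue $H$ across that line from these axioms alone. The standard Selberg--Delange framework (as in Tenenbaum, which the paper uses elsewhere) assumes precisely such a continuation with quantitative bounds as input. So the obstacle is not just bookkeeping: without an additional pointwise hypothesis on $\gamma(p)$, the complex-analytic approach does not go through, and one is pushed back to the elementary argument. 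Your fallback suggestion---quote the base case from the half-dimensional sieve literature and do the partial summation---is essentially what the paper does, except that \cite{GPY2} already packages the general-$G$ statement, so even the partial summation step is absorbed into the citation.
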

\begin{proof}
This is \cite[Lemma 4]{GPY2} with slight changes to notation.
\end{proof}

To use this lemma in practice, we need to be able to evaluate the singular series $c_{\gamma}$ which appears. In the next lemma we do this for a function $\gamma(p)$ which covers the cases of interest to us.

\begin{lemma}[Evaluation of singular series]\label{lemma:singseries}
Let 
\begin{equation*}
\gamma(p)=
\begin{cases}
1+O(1/p)\,\,&\text{ if $p\nmid W, p\equiv 3\,\,(\text{mod}\,\,4)$,} \\
0\,\,&\text{otherwise,}
\end{cases}
\end{equation*}
With the notation of Lemma \ref{lemma:technical}, we have
\begin{align*}
c_{\gamma} &= \frac{A}{\sqrt{L(1)}}\cdot\frac{\varphi(W_3)}{W_3}(1+O(D_0^{-1}))
\end{align*}
where $A$ is the Ramanujan-Landau constant defined in~(\ref{eq:A}).
\end{lemma}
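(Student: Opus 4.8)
The plan is to compute the Euler product defining $c_\gamma$ directly, splitting the primes according to their residue class mod $4$ and whether they divide $W$. Write
\[
c_\gamma = \prod_p \left(1-\frac{\gamma(p)}{p}\right)^{-1}\left(1-\frac{1}{p}\right)^{1/2}.
\]
For $p=2$, $\gamma(2)=0$, so the factor is simply $(1-\tfrac12)^{1/2} = 2^{-1/2}$; for $p\mid W$ with $p\equiv 3\,(4)$, $\gamma(p)=0$ so the factor is $(1-1/p)^{1/2}$; for $p\equiv 1\,(4)$, $\gamma(p)=0$ so again the factor is $(1-1/p)^{1/2}$; and for the remaining primes, i.e.\ $p\nmid W$ with $p\equiv 3\,(4)$, the factor is $\left(1-\tfrac{\gamma(p)}{p}\right)^{-1}(1-1/p)^{1/2}$ with $\gamma(p)=1+O(1/p)$. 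The first step is thus to isolate
\[
c_\gamma = \frac{1}{\sqrt2}\;\prod_{\substack{p\mid W_3}}\left(1-\frac1p\right)^{1/2}\;\prod_{\substack{p\equiv 1\,(4)}}\left(1-\frac1p\right)^{1/2}\;\prod_{\substack{p\nmid W\\ p\equiv 3\,(4)}}\left(1-\frac{\gamma(p)}{p}\right)^{-1}\left(1-\frac1p\right)^{1/2}.
\]

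Next I would compare this with the Ramanujan–Landau constant $A = \tfrac{1}{\sqrt2}\prod_{p\equiv 3\,(4)}(1-p^{-2})^{-1/2}$ and with $L(1)=L(1,\chi_4) = \prod_p (1-\chi_4(p)/p)^{-1} = \prod_{p\equiv 1\,(4)}(1-1/p)^{-1}\prod_{p\equiv 3\,(4)}(1+1/p)^{-1}$. The strategy is to show that the combination $\frac{A}{\sqrt{L(1)}}\cdot\frac{\varphi(W_3)}{W_3}$ equals the same Euler product up to the perturbation coming from $\gamma(p)=1+O(1/p)$. Indeed $\frac{\varphi(W_3)}{W_3}=\prod_{p\mid W_3}(1-1/p)$, which supplies exactly the $p\mid W_3$ factors (squared relative to what we need, so one must be careful: the target has $(1-1/p)^{1/2}$ for $p\mid W_3$, whereas $\varphi(W_3)/W_3$ has $(1-1/p)$ — the discrepancy is absorbed because for $p\mid W_3$ the factor $\left(1-\gamma(p)/p\right)^{-1}$ is \emph{absent} in $c_\gamma$ but \emph{present} in the formula for $A$ restricted to those primes, and these reconcile). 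Concretely I would form the ratio $c_\gamma \big/ \left(\tfrac{A}{\sqrt{L(1)}}\cdot\tfrac{\varphi(W_3)}{W_3}\right)$, write everything as a single Euler product, and verify that for every prime the local factor is $1$ when $p\mid W$ or $p\equiv 1\,(4)$, and is $1+O(1/p^2)$ for $p\nmid W$, $p\equiv 3\,(4)$ (the $O(1/p^2)$ coming from replacing $\gamma(p)=1+O(1/p)$ by $1$ inside $(1-\gamma(p)/p)^{-1}$ versus $(1-1/p)^{-1}$, which differ by a factor $1+O(1/p^2)$).

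The final step is to estimate the tail. Since the local discrepancy is $1+O(1/p^2)$ only for primes $p\nmid W$, i.e.\ $p > D_0$, the product over these primes is $1+O\!\left(\sum_{p>D_0}p^{-2}\right) = 1 + O(D_0^{-1})$, giving the claimed error term. The primes $p\le D_0$ dividing $W_3$ contribute exactly, with no error, once the bookkeeping is done. I would also double-check the convergence and the value of $\prod_{p\equiv 3\,(4)}(1-p^{-2})^{-1/2}$ against the definition of $A$ in~(\ref{eq:A}) to make sure the $\sqrt2$ and the $L(1,\chi_4)$ factors land correctly; this amounts to the classical identity $\prod_{p\equiv 1\,(4)}(1-1/p)^{-1}\prod_{p\equiv 3\,(4)}(1-1/p)^{-1} = \zeta(1)$-type divergence being tamed by the $\chi_4$-twist, so it is really the factorization $\prod_p(1-1/p)^{-1/2}\cdot(\text{stuff}) = \frac{1}{\sqrt{L(1,\chi_4)}}\cdot(\ldots)$ that one must set up cleanly.

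The main obstacle is purely organizational: keeping straight which primes carry a $(1-\gamma(p)/p)^{-1}$ factor and which do not, and matching the half-powers of $(1-1/p)$ in $c_\gamma$ against the integer powers appearing in $\varphi(W_3)/W_3$, in $A$, and in $L(1,\chi_4)^{-1/2}$. There is no analytic difficulty — all products converge absolutely once the $\chi_4$-cancellation between the $p\equiv1$ and $p\equiv3$ factors is exploited — but the constant-chasing must be done with care, and the cleanest route is to verify the identity prime-by-prime on the ratio rather than manipulating the divergent-looking pieces separately.
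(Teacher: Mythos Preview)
Your approach is correct and essentially the same as the paper's: both compute the Euler product for $c_\gamma$ by splitting primes according to residue class mod~$4$ and divisibility by $W$, then show the deviation from $\tfrac{A}{\sqrt{L(1)}}\cdot\tfrac{\varphi(W_3)}{W_3}$ is a product of factors $1+O(1/p^2)$ over primes $p>D_0$, giving $1+O(D_0^{-1})$. The paper streamlines the bookkeeping you flag as the ``main obstacle'' by introducing an auxiliary $\delta(p)$ equal to $1$ for \emph{all} $p\equiv 3\pmod 4$ (no $W$ constraint), first verifying the clean identity $c_\delta=A/\sqrt{L(1)}$, and then reading off $c_\gamma/c_\delta=\prod_{p\mid W_3}(1-1/p)\cdot\prod_{p\nmid W,\,p\equiv 3\,(4)}(1-\alpha(p)/(p-1))^{-1}$ directly---this sidesteps the half-power matching that was giving you pause.
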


\begin{proof}
Let $\gamma(p)=1+\alpha(p)$ where $\alpha(p)=O(1/p).$ Define the auxiliary function
\begin{equation*}
\delta(p)=
\begin{cases}
1\,\,&\text{if $p\equiv 3\,\,(\text{mod}\,\,4)$,} \\
0\,\,&\text{otherwise.}
\end{cases}
\end{equation*}
One can easily show $c_{\delta} = A/\sqrt{L(1)}.$ The result follows because
\begin{equation*}
c_{\gamma} = c_{\delta}  \prod_{\substack{p|W \\ p\equiv 3\,\,(\text{mod}\,\,4)}}\bigg(1-\frac{1}{p}\bigg)\prod_{\substack{p\nmid W \\ p\equiv 3\,\,(\text{mod}\,\,4)}}\bigg(1-\frac{\alpha(p)}{p-1}\bigg)^{-1}.
\end{equation*}
The latter product is $1+O(D_0^{-1})$ by our assumption $\alpha(p)=O(1/p).$
\end{proof}

The next lemma collects both of these results together. First we recall the definition of the normalising constant from (\ref{eq:B}):
\begin{equation}\label{eq:B2}
B= \frac{2A \varphi(W_3) (\log{R})^{\frac{1}{2}}}{\pi W_3}.
\end{equation}
\begin{lemma}[Evaluation of sieve sums]\label{key}
Let $f$ be a multiplicative function such that 
$$f(p)=\frac{1}{p}+O\bigg(\frac{1}{p^2}\bigg).$$
Then for any piece-wise smooth function $G$ we have
\begin{align*}
&\sum_{\substack{d\leq R \\ (d,W)=1 \\ p|d \Rightarrow p\equiv 3\,\,(\text{mod}\,\,4)}}\mu^2(d)f(d)G\bigg(\frac{\log{d}}{\log{x}}\bigg)= B \int_0^1G(x)\frac{\mathrm{d}x}{\sqrt{x}} +O\bigg(\frac{G_{\text{max}}B}{D_0}\bigg).
\end{align*}
\end{lemma}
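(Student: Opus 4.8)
The plan is to deduce Lemma~\ref{key} from Lemma~\ref{lemma:technical} and Lemma~\ref{lemma:singseries} by choosing the multiplicative function $\gamma$ appropriately, so that the totally multiplicative $g(p) = \gamma(p)/(p-\gamma(p))$ matches the given $f(p)$ closely enough. First I would set $z = R$ and note that the sum in the statement is a sum over squarefree $d$ with $(d,W)=1$ and all prime factors $\equiv 3 \pmod 4$; the natural choice is to define $\gamma$ supported on primes $p \nmid W$ with $p \equiv 3 \pmod 4$, so that the support constraint in the sum is automatically encoded by $g$ vanishing off that set (since $g$ is built from $\gamma$). To match $f(p) = 1/p + O(1/p^2)$, observe that $g(p) = \gamma(p)/(p-\gamma(p))$ equals $1/p + O(1/p^2)$ exactly when $\gamma(p) = 1 + O(1/p)$; solving, one takes $\gamma(p) = p f(p)/(1 + f(p))$ on the relevant primes and $\gamma(p) = 0$ elsewhere, which indeed gives $\gamma(p) = 1 + O(1/p)$ and reproduces $g(p) = f(p)$ on squarefree arguments. (One must check the two sieve axioms: the bound $\gamma(p)/p \le 1 - A_1$ holds with, say, $A_1 = 1/2$ for $p$ large and can be absorbed for small $p$; the Mertens-type axiom with $L = O(1)$ follows from $\gamma(p) = \delta(p) + O(1/p)$ together with the prime number theorem in arithmetic progressions mod $4$, which gives $\sum_{w \le p \le z} \delta(p)\log p/p = \tfrac12 \log(z/w) + O(1)$.)

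Next I would apply Lemma~\ref{lemma:technical} with this $\gamma$, the function $G$ as given, and $z = R$, yielding
\begin{equation*}
\sum_{\substack{d < R \\ p|d \Rightarrow p \equiv 3\,(\text{mod}\,4),\ (d,W)=1}} \mu^2(d) f(d) G\!\left(\frac{\log d}{\log R}\right) = c_\gamma \frac{(\log R)^{1/2}}{\Gamma(1/2)} \int_0^1 G(x) \frac{\mathrm{d}x}{\sqrt{x}} + O\!\left(c_\gamma L G_{\text{max}} (\log R)^{-1/2}\right),
\end{equation*}
where I have used that $g$ restricted to squarefree integers coprime to $W$ with prime factors $\equiv 3 \pmod 4$ agrees with $f$, and vanishes otherwise, so the left side is exactly the sum in the statement of Lemma~\ref{key}. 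Then I would invoke Lemma~\ref{lemma:singseries}: since $\gamma(p) = 1 + O(1/p)$ for $p \nmid W$, $p \equiv 3 \pmod 4$ and $\gamma(p) = 0$ otherwise, that lemma gives $c_\gamma = \frac{A}{\sqrt{L(1)}} \cdot \frac{\varphi(W_3)}{W_3}(1 + O(D_0^{-1}))$. Substituting, recalling $\Gamma(1/2) = \sqrt\pi$ and $L(1) = L(1,\chi_4) = \pi/4$, the leading coefficient becomes $\frac{A}{\sqrt{\pi/4}} \cdot \frac{1}{\sqrt\pi} \cdot \frac{\varphi(W_3)}{W_3} (\log R)^{1/2} = \frac{2A}{\pi} \cdot \frac{\varphi(W_3)}{W_3}(\log R)^{1/2} = B$ by the definition~(\ref{eq:B2}), up to the $(1 + O(D_0^{-1}))$ factor. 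Finally, since $L = O(1)$, the error term $O(c_\gamma L G_{\text{max}} (\log R)^{-1/2})$ is $O(G_{\text{max}} B / \log R)$, and the multiplicative error $O(D_0^{-1})$ on the main term contributes $O(G_{\text{max}} B / D_0)$; since $D_0 = (\log\log N)^3 \ll \log R$, the combined error is $O(G_{\text{max}} B / D_0)$, as claimed. There is a harmless discrepancy between the constraint $d < R$ in Lemma~\ref{lemma:technical} and $d \le R$ in Lemma~\ref{key}, absorbed into the error term.

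The main obstacle I anticipate is the bookkeeping around the singular series constant and the precise definition of $B$: one has to verify that the Euler product $c_\gamma$, after stripping the $W_3$-local factors and inserting $\Gamma(1/2)$ and $L(1,\chi_4)$, collapses exactly to $B$ with no stray constant. A secondary point requiring care is confirming that the choice of $\gamma$ genuinely satisfies the Selberg sieve axioms of Lemma~\ref{lemma:technical} with $L = O(1)$ uniformly — the Mertens estimate for primes $\equiv 3 \pmod 4$ must hold with an absolute $O(1)$, and one must handle the finitely many small primes and the primes dividing $W$ (where $\gamma = 0$) without spoiling the axioms; but these are standard and the error they introduce is swallowed by the stated $O(G_{\text{max}} B / D_0)$ term.
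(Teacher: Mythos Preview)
Your approach is essentially the same as the paper's: choose $\gamma$ so that $\gamma(p)/(p-\gamma(p))=f(p)$ on the relevant primes, apply Lemma~\ref{lemma:technical}, and then evaluate $c_\gamma$ via Lemma~\ref{lemma:singseries}. Your formula $\gamma(p)=pf(p)/(1+f(p))$ is the correct solution of that equation.

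There is one inaccuracy. You assert that the Mertens-type axiom holds with $L=O(1)$, arguing from $\gamma(p)=\delta(p)+O(1/p)$. But this identity fails on the primes $p\le D_0$ with $p\equiv 3\pmod 4$: there $\delta(p)=1$ while $\gamma(p)=0$. Taking $w=2$ and $z$ large in the axiom, the missing contribution is $\sum_{p\le D_0,\ p\equiv 3\,(4)}\log p/p\asymp\tfrac12\log D_0$, so in fact one only gets $L\ll 1+\log D_0$, which is exactly what the paper records. This does not damage the conclusion: the resulting error $O\big(c_\gamma L G_{\max}(\log R)^{-1/2}\big)=O\big(G_{\max}B\log D_0/\log R\big)$ is still dominated by the $O(G_{\max}B/D_0)$ coming from the $1+O(D_0^{-1})$ in $c_\gamma$, since $D_0\log D_0\ll\log R$. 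But the claim $L=O(1)$ itself is false and should be corrected.
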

\begin{proof}
Let $f(p)=1/p+g(p)$ where $g(p)=O(1/p^2),$ and consider the function $\gamma$ defined on primes by
\begin{equation*}
\gamma(p)=
\begin{cases}
1-\frac{1}{p+1+pg(p)}\,\,&\text{if $p\nmid W, p\equiv 3\,\,(\text{mod}\,\,4)$,} \\
0,\,\,&\text{otherwise.}
\end{cases}
\end{equation*}
With this choice of $\gamma(p)$ we have
\begin{equation*}
\frac{\gamma(p)}{p-\gamma(p)} = f(p).
\end{equation*}
Note that 
\begin{align*}
\sum_{w\leq p \leq z} \frac{\gamma(p)\log{p}}{p} &= \sum_{\substack{w\leq p \leq z \\ p\equiv 3\,\,(\text{mod}\,\,4) \\ p\nmid W}} \frac{\log{p}}{p} +O\bigg( \sum_{\substack{w\leq p \leq z \\ p\equiv 3\,\,(\text{mod}\,\,4) \\ p\nmid W}} \frac{\log{p}}{p^2}\bigg)\\
&=\sum_{\substack{w\leq p \leq z \\ p\equiv 3\,\,(\text{mod}\,\,4)}} \frac{\log{p}}{p}+O\bigg(\sum_{\substack{w\leq p \leq z \\ p\equiv 3\,\,(\text{mod}\,\,4) \\ p|W}} \frac{\log{p}}{p}\bigg)+O(1) \\
&=\frac{1}{2}\log{\frac{z}{w}} + O(\log{D_0})+O(1).
\end{align*}
Therefore we can apply Lemma~\ref{lemma:technical} with $\gamma(p),$ taking $L\ll 1+\log{D_0}$ and $A_2$ a suitable constant. We obtain 
\begin{equation*}
\sum_{\substack{d\leq R \\ (d,W)=1 \\ p|d\Rightarrow p\equiv 3\,\,(\text{mod}\,\,4)}}\mu^2(d)f(d)G\bigg(\frac{\log{d}}{\log{x}}\bigg) = \frac{c_{\gamma}(\log{R})^{\frac{1}{2}}}{\Gamma(1/2)}\int_0^1G(x)\frac{\mathrm{d}x}{\sqrt{x}} +O\bigg(\frac{G_{\text{max}}c_{\gamma}(1+\log{D_0})}{(\log{R})^{\frac{1}{2}}}\bigg).
\end{equation*}
We can evaluate $c_{\gamma}$ by Lemma~\ref{lemma:singseries} to find
\begin{equation*}
c_{\gamma} = \frac{A}{\sqrt{L(1)}}\cdot\frac{\varphi(W_3)}{W_3}(1+O(D_0^{-1}))
\end{equation*}
When we substitute this back into our expression we see the error incurred here contributes $O(G_{\text{max}}B/D_0)$ and dominates. The result follows as $\Gamma(1/2)\sqrt{L(1)} = \pi/2.$
\end{proof}

We highlight the following two results, the first of which follows immediately from Lemma~\ref{key}, and the second of which is trivial.
\\
\begin{enumerate}[wide, labelwidth=!, labelindent=0pt]
	\item For multiplicative functions $f$ satisfying $f(p)=1/p+O(1/p^2)$ we have the upper bound 
\begin{equation}
\sum_{\substack{d\leq R \\ (d,W)=1 \\ p|d\Rightarrow p\equiv 3\,\,(\text{mod}\,\,4)}}\mu^2(d)f(d) \ll B.
\end{equation}
	\item For multiplicative functions $g$ satisfying $g(p)=O(1/p^2)$ we have the upper bound
	\begin{equation}
\sum_{\substack{d\leq R \\ (d,W)=1 \\ p|d\Rightarrow p\equiv 3\,\,(\text{mod}\,\,4)}}\mu^2(d)g(d) \ll 1.
\end{equation}
\end{enumerate}

These sums will appear frequently in our calculations, and we will use these bounds without comment in the arguments which follow.

\section{Establishing Lemma \ref{lemma:generalsieve}}\label{section:sieveproofs}
Our attention now turns to establishing  Lemma \ref{lemma:generalsieve}. We follow the combinatorial arguments used by Maynard - the steps which follow mirror those found in \cite{Maynard}. 
\subsection{Change of variables.} Our first step to evaluating the sums appearing in Lemma \ref{lemma:generalsieve} is to make a change of variables. We do so in the following proposition.
\begin{prop}[Diagonalising the sieve sum]\label{prop:2}
With notation as in Lemma \ref{lemma:generalsieve}, denote by $f^*,g^*$ the convolutions
\begin{equation*}
f^*=\mu*\frac{1}{f},\,\,\,\,\,g^*=\mu*\frac{1}{g}.
\end{equation*}
Define the diagonalising vectors $y_{r_1,\ldots,r_k}^{(J,p,m)} = y_{r_1,\ldots,r_k}^{(J,p,m,f,g)}$ by
\begin{equation*}
y_{r_1,\ldots,r_k}^{(J,p,m)}= \bigg(\prod_{i\in I}\mu(r_i)f^*(r_i)\bigg)\bigg(\prod_{j\in J}\mu(r_j)g^*(r_j)\bigg) \sum_{\substack{d_1,\ldots, d_k \\ r_i|d_i\,\,\forall i \\ p|d_m}}\lambda_{d_1,\ldots,d_k}\prod_{i\in I}f(d_i)\prod_{j\in J}g(d_j).
\end{equation*}
Let $y_{\text{max}}^{(J,p,m)}=\sup_{r_1,\ldots,r_k} |y_{r_1,\ldots,r_k}^{(J,p,m)}|$ and $\tilde{y}_{\text{max}}^{(J,p,m)}=\sup_{\substack{r_1,\ldots,r_k \\ (r_m,p)=1}} |y_{r_1,\ldots,r_k}^{(J,p,m)}|.$ (Note that these coincide if $p=1.$) Then we have
\begin{align*}
S_{J,p_1,p_2,m} &= \sum_{\substack{u_1,\ldots,u_k \\ (u_m,p_1p_2)=1 \\ u_j=1\,\,\forall j\in J}}\frac{(y_{u_1,\ldots,u_k}^{(J,p_1,m)})(y_{u_1,\ldots,u_k}^{(J,p_2,m)})}{\prod_{i\in I}f^*(u_i)\prod_{j\in J}g^*(u_j)}+E.
\end{align*}
If $m\in J$ then the (error) term $E$ satisfies
\begin{align*}
E \ll B^{|I|}\bigg[&\frac{(\tilde{y}_{\text{max}}^{(J,p_1,m)})(\tilde{y}_{\text{max}}^{(J,p_2,m)})}{D_0}+\frac{(y_{\text{max}}^{(J,p_1,m)})(y_{\text{max}}^{(J,p_2,m)})}{(p_1p_2/(p_1,p_2))^2} \\
&+\frac{(y_{\text{max}}^{(J,p_1,m)})(\tilde{y}_{\text{max}}^{(J,p_2,m)})}{p_1^2}+\frac{(\tilde{y}_{\text{max}}^{(J,p_1,m)})(y_{\text{max}}^{(J,p_2,m)})}{p_2^2}\bigg].
\end{align*}
If $m\notin J$ then $E$ satisfies a similar estimate, namely that which is obtained upon replacing all occurrences of $p_i^2$ with $p_i$ in the above expression, for $i\in \{1,2\}$. Moreover, in both of these cases, we adopt the convention that if $p_i=1$ then any term in our expression for $E$ involving $p_i$ in the denominator may be omitted.
\end{prop}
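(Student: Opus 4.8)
I would follow the combinatorial diagonalisation of Maynard~\cite{Maynard}, while tracking the extra divisibility conditions $p_1|d_m$, $p_2|e_m$ and the half-dimensional shape of the weights. The first step is to exploit the support of $\lambda_{d_1,\ldots,d_k}$ from Proposition~\ref{prop:sieve}: it vanishes unless $\prod_i d_i$ is squarefree, coprime to $W$, with every prime factor $\equiv 3\,\,(\text{mod}\,\,4)$, so whenever $\lambda_{d_1,\ldots,d_k}\lambda_{e_1,\ldots,e_k}\neq 0$ the $d_i$ are pairwise coprime, the $e_i$ are pairwise coprime, and all of them are coprime to $W$. Consequently the condition ``$W,[d_1,e_1],\ldots,[d_k,e_k]$ coprime'' is equivalent to the cross-coprimality conditions $(d_i,e_j)=1$ for all $i\neq j$, and, the arguments being squarefree with the relevant prime values of $f,g$ non-zero, we may write $f([d_i,e_i])=f(d_i)f(e_i)/f((d_i,e_i))$ for $i\in I$ and $g([d_j,e_j])=g(d_j)g(e_j)/g((d_j,e_j))$ for $j\in J$.

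Next I would diagonalise, ignoring for the moment both the cross-coprimality conditions and the restrictions $u_j=1$ and $(u_m,p_1p_2)=1$. Using $1/f((d_i,e_i))=\sum_{u_i|(d_i,e_i)}f^*(u_i)$ and $1/g((d_j,e_j))=\sum_{u_j|(d_j,e_j)}g^*(u_j)$, interchanging the order of summation, and recognising the inner sum over $d$ with $u_i|d_i$ and $p_1|d_m$ as $y^{(J,p_1,m)}_{u_1,\ldots,u_k}/(\prod_{i\in I}\mu(u_i)f^*(u_i)\prod_{j\in J}\mu(u_j)g^*(u_j))$ (and the analogous identity over $e$ with $p_2$), one finds this equals, \emph{with no error term},
\[
\widehat S:=\sum_{u_1,\ldots,u_k}\frac{y^{(J,p_1,m)}_{u_1,\ldots,u_k}\,y^{(J,p_2,m)}_{u_1,\ldots,u_k}}{\prod_{i\in I}f^*(u_i)\prod_{j\in J}g^*(u_j)},
\]
where $\mu(u_i)^2=1$ on the support of $y$ and off-support terms vanish. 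Thus the whole content of the proposition is that re-imposing the cross-coprimality conditions on $\widehat S$ and restricting its summation to $u_j=1$ ($j\in J$) and $(u_m,p_1p_2)=1$ together cost only $E$.

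For the third step I would estimate these three discrepancies, using: $f^*(n)\asymp n$ and $g^*(n)\asymp n^2$ on squarefree $n$; the bounds $\sum_u 1/f^*(u)\ll B$, $\sum_u 1/g^*(u)\ll 1$ and $\sum_{u>1}1/g^*(u)\ll 1/D_0$ (from Lemma~\ref{key} and the two estimates following it, applied to $1/f^*$ and $1/g^*$); and $|y^{(J,p,m)}_u|\leq y_{\text{max}}^{(J,p,m)}$, improving to $\leq\tilde{y}_{\text{max}}^{(J,p,m)}$ when $(u_m,p)=1$. (i) For the restriction $(u_m,p_1p_2)=1$: the complementary terms split according to whether $p_1|u_m$, $p_2|u_m$, or both, and bounding the $y$-factor on the side whose prime divides $u_m$ by $y_{\text{max}}$ and the other by $\tilde{y}_{\text{max}}$, while summing $1/f^*(u_m)$ (if $m\in I$) or $1/g^*(u_m)$ (if $m\in J$) over the constrained $u_m$, produces the savings $1/p_i$ vs. $1/p_i^2$ and $1/(p_1p_2/(p_1,p_2))$ vs. $1/(p_1p_2/(p_1,p_2))^2$, times $B^{|I|}$ from the remaining coordinates; this gives all terms of $E$ except the first. (ii) For the restriction $u_j=1$: in the range $(u_m,p_1p_2)=1$ both $y$-factors are $\leq\tilde{y}_{\text{max}}$, and since every $u_j>1$ has a prime factor $>D_0$ we have $\sum_{u_j>1}1/g^*(u_j)\ll 1/D_0$, yielding $\tilde{y}_{\text{max}}^{(J,p_1,m)}\tilde{y}_{\text{max}}^{(J,p_2,m)}B^{|I|}/D_0$. (iii) For re-imposing cross-coprimality: expanding $\mathds{1}[(d_i,e_j)=1]=\sum_{s|(d_i,e_j)}\mu(s)$, any term with some $s>1$ forces a prime $q>D_0$, $q\equiv 3\,\,(\text{mod}\,\,4)$, dividing $d_i$ and $e_j$ for some $i\neq j$; absorbing $q$ into the appropriate indices of the two diagonalising vectors (legitimate since the $d$'s, resp. the $e$'s, are pairwise coprime on the support) gains a factor $\ll 1/q^2$, and $\sum_{q>D_0}1/q^2\ll 1/D_0$, again giving $\tilde{y}_{\text{max}}^{(J,p_1,m)}\tilde{y}_{\text{max}}^{(J,p_2,m)}B^{|I|}/D_0$, once the sub-cases in which $q$ is itself divisible by $p_1$ or $p_2$ (possible only if $p_i>D_0$) are absorbed into the $1/p_i^2$ (resp. $1/p_i$) terms of (i). The case $J=\emptyset$ is the same argument with $f(p)=1/p$ exactly and all $J$-factors suppressed.

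I expect the only genuine difficulty to be the bookkeeping in this third step: keeping precise track of which of $y_{\text{max}}^{(J,p_1,m)}$, $\tilde{y}_{\text{max}}^{(J,p_1,m)}$ (and their $p_2$ analogues) is legitimately available on each error piece, and extracting the correct powers of $p_1,p_2$ and of $B$ in the two cases $m\in J$ and $m\notin J$. Making the re-indexing in (iii) rigorous — so that the non-coprime sum is bounded by a $y_{\text{max}}$ of the \emph{original} family of diagonalising vectors, after cleanly disposing of the cases in which the auxiliary prime interacts with $p_1$ or $p_2$ — is the step most likely to require a delicate case analysis.
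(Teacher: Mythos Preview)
Your approach is essentially the paper's own: rewrite $f([d_i,e_i])$ multiplicatively, open $1/f((d_i,e_i))$ and $1/g((d_j,e_j))$ with the convolutions $f^*,g^*$, handle the cross-coprimality $(d_i,e_j)=1$ by M\"obius via auxiliary variables $s_{i,j}$, and then recognise the resulting $d$- and $e$-sums as $y^{(J,p_1,m)}$ and $y^{(J,p_2,m)}$ evaluated at the shifted indices $a_i=u_i\prod_{j\neq i}s_{i,j}$, $b_j=u_j\prod_{i\neq j}s_{i,j}$. The only organisational difference is that the paper keeps the $s_{i,j}$ in play throughout and \emph{first} reduces to $(a_m,p_1)=(b_m,p_2)=1$ at the level of the shifted variables, before disposing of $s_{i,j}>1$ and $u_j>1$; this ordering lets them use $\tilde{y}_{\text{max}}$ cleanly in the latter two steps, whereas your ordering forces the extra case split inside (iii) that you flagged. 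Either order works and yields the same error shape.
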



\begin{proof}
Recall the definition of $S_{J,p_1,p_2,m}$ given in Lemma~\ref{lemma:generalsieve}:
\begin{equation}
S_{J,p_1,p_2,m}=\sum_{\substack{d_1,\ldots,d_k \\ e_1,\ldots,e_k \\ W,[d_1,e_1],\ldots,[d_k,e_k]\text{ coprime} \\ p_1|d_m,p_2|e_m}}\lambda_{d_1,\ldots,d_k}\lambda_{e_1,\ldots,e_k} \prod_{i\in I}f([d_i,e_i])\prod_{j\in J}g([d_j,e_j]).
\end{equation}
We can write this in the form
\begin{equation}\label{eq:rand50}
S_{J,p_1,p_2,m}=\sum_{\substack{d_1,\ldots,d_k \\ e_1,\ldots,e_k \\ W,[d_1,e_1],\ldots,[d_k,e_k]\text{ coprime} \\ p_1|d_m, p_2|e_m}}\lambda_{d_1,\ldots,d_k}\lambda_{e_1,\ldots,e_k} \prod_{i\in I}\frac{f(d_i)f(e_i)}{f((d_i,e_i))}\prod_{j\in J}\frac{g(d_j)g(e_j)}{g((d_j,e_j))},
\end{equation}
using multiplicativity of the functions $f$ and $g,$ together with the fact $[d_i,e_i]$ is square-free for each $i$. We remark that because $f,g$ are non-zero, the functions $1/f,1/g$ are well-defined. We note the convolution identities
\begin{equation*}
\frac{1}{f((d_i,e_i))} = \sum_{u_i|d_i,e_i}f^*(u_i),\,\,\,\,\,\,\,\hspace{7mm}\frac{1}{g((d_j,e_j))} = \sum_{u_j|d_j,e_j}g^*(u_j)
\end{equation*}
for $f^*$ and $g^*$. Substituting these into (\ref{eq:rand50}) and swapping the order of summation, we obtain
\begin{align*}
&\sum_{\substack{u_1,\ldots,u_k}}\bigg(\prod_{i\in I}f^*(u_i)\bigg)\bigg(\prod_{j\in J}g^*(u_j)\bigg)\\
&\cdot\sum_{\substack{d_1,\ldots,d_k \\ e_1,\ldots,e_k \\ W,[d_1,e_1],\ldots,[d_k,e_k]\text{ coprime} \\ u_i|d_i,e_i\,\,\forall i\in I \\ u_j|d_j,e_j\,\,\forall j\in J \\ p_1|d_m,p_2|e_m}}\lambda_{d_1,\ldots,d_k}\lambda_{e_1,\ldots,e_k} \prod_{i\in I}f(d_i)f(e_i)\prod_{j\in J}g(d_j)g(e_j).
\end{align*}
From the support of the $\lambda_{d_1,\ldots,d_k},$ we see the only restriction coming from the pairwise coprimality of $W,[d_1,e_1],\ldots,[d_k,e_k]$ is from the possibility $(d_i,e_j)\neq 1$ for $i\neq j$. We can take care of this constraint by M\"obius inversion: multiplying by $\sum_{s_{i,j}|d_i,e_j}\mu(s_{i,j})$ for all $i\neq j,$ we obtain
\begin{align} \nonumber
&\sum_{\substack{u_1,\ldots,u_k}}\bigg(\prod_{i\in I}f^*(u_i)\bigg)\bigg(\prod_{j\in J}g^*(u_j)\bigg) \sum_{s_{1,2},\ldots,s_{k-1,k}}\bigg(\prod_{\substack{1\leq i,j\leq k \\ i\neq j}}\mu(s_{i,j})\bigg) \\ \label{eq:rand11}
&\cdot\sum_{\substack{d_1,\ldots,d_k \\ e_1,\ldots,e_k \\ u_i|d_i,e_i\,\,\forall i\in I \\ u_j|d_j,e_j\,\,\forall j\in J \\ s_{i,j}|d_i,e_j\,\,\forall i\neq j \\ p_1|d_m, p_2|e_m}}\lambda_{d_1,\ldots,d_k}\lambda_{e_1,\ldots,e_k} \prod_{i\in I}f(d_i)f(e_i)\prod_{j\in J}g(d_j)g(e_j).
\end{align}
We may restrict to the case where $s_{i,j}$ is coprime to $s_{i,a},s_{b,j}$ and $u_i,u_j,$ for $a\neq j$ and $b\neq i,$ because the vectors $\lambda_{d_1,\ldots,d_k}$ are supported on square-free integers $d=\prod_{i=1}^{k}d_i.$ Denote the sum over $s_{i,j}$ with these conditions by $\sideset{}{'}\sum$. Define the diagonalising vectors 
\begin{equation}\label{eq:diagvec}
y_{r_1,\ldots,r_k}^{(J,p,m)}= \bigg(\prod_{i\in I}\mu(r_i)f^*(r_i)\bigg)\bigg(\prod_{j\in J}\mu(r_j)g^*(r_j)\bigg) \sum_{\substack{d_1,\ldots, d_k \\ r_i|d_i\,\,\forall i \\ p|d_m}}\lambda_{d_1,\ldots,d_k}\prod_{i\in I}f(d_i)\prod_{j\in J}g(d_j).
\end{equation}
From the support of $\lambda_{d_1,\ldots,d_k}$ we see that $y_{r_1,\ldots,r_k}^{(J,p,m)}$ is also supported on $r_1,\ldots,r_k$ with $r=\prod_{i=1}^{k}r_i$ square-free, $(r,W)=1$ and $p|r\Rightarrow p\equiv 3\,\,(\text{mod}\,\,4).$ We claim this change of variables is invertible. Indeed, from the definition (\ref{eq:diagvec}), for $d_1,\ldots,d_k$ with $\prod_{i=1}^{k}d_i$ square-free, we have
\begin{align} \nonumber
\sum_{\substack{r_1,\ldots,r_k \\ d_i|r_i\,\,\forall i}}\frac{y_{r_1,\ldots,r_k}^{(J,p,m)}}{\prod_{i\in I}f^*(r_i)\prod_{j\in J}g^*(r_j)} &=\sum_{\substack{r_1,\ldots,r_k \\ d_i|r_i\,\,\forall i }}\prod_{i=1}^{k}\mu(r_i) \sum_{\substack{e_1,\ldots,e_k \\ r_i|e_i\,\,\forall i \\ p|e_m}}\lambda_{e_1,\ldots,e_k}\prod_{i\in I}f(e_i)\prod_{j\in J}g(e_j) \\ \nonumber
&= \sum_{\substack{e_1,\ldots,e_k }}1_{p|e_m} \lambda_{e_1,\ldots,e_k}\prod_{i\in I}f(e_i)\prod_{j\in J}g(e_j)   \sum_{\substack{r_1,\ldots,r_k \\ r_i|e_i\,\,\forall i \\ d_i|r_i\,\,\forall i}}\prod_{i=1}^{k}\mu(r_i) \\ \label{eq:inversion}
&= 1_{p|d_m}\lambda_{d_1,\ldots,d_k}\prod_{i\in I}\mu(d_i)f(d_i)\prod_{j\in J}\mu(d_j)g(d_j).
\end{align}
With this transformation our sum (\ref{eq:rand11}) becomes
\begin{align} \nonumber
&\sum_{\substack{u_1,\ldots,u_k}}\bigg(\prod_{i\in I}f^*(u_i)\bigg)\bigg(\prod_{j\in J}g^*(u_j)\bigg) \\ \nonumber
&\cdot\sideset{}{'}\sum_{s_{1,2},\ldots,s_{k-1,k}}\bigg(\prod_{\substack{1\leq i,j\leq k \\ i\neq j}}\mu(s_{i,j})\bigg) \bigg(\prod_{i\in I}\frac{\mu(a_i)\mu(b_i)}{f^*(a_i)f^*(b_i)}\bigg)\bigg(\prod_{j\in J}\frac{\mu(a_j)\mu(b_j)}{g^*(a_j)g^*(b_j)}\bigg)y_{a_1,\ldots,a_k}^{(J,p_1,m)}y_{b_1,\ldots,b_k}^{(J,p_2,m)},
\end{align}
where we have defined $a_i = u_i \prod_{j\neq i}s_{i,j}$ and $b_j = u_j\prod_{i\neq j}s_{i,j}.$ Because of our constraints on the $s_{i,j}$ variables, we can use multiplicativity to write this as
\begin{align} \nonumber
&\sum_{\substack{u_1,\ldots,u_k}}\bigg(\prod_{i\in I}\frac{\mu^2(u_i)}{f^*(u_i)}\bigg)\bigg(\prod_{j\in J}\frac{\mu^2(u_j)}{g^*(u_j)}\bigg) \\ \nonumber
&\sideset{}{'}\sum_{s_{1,2},\ldots,s_{k-1,k}}\bigg(\prod_{\substack{i,j\in I \\ i\neq j}}\frac{\mu(s_{i,j})}{f^*(s_{i,j})^2}\bigg)\bigg(\prod_{\substack{i,j\in J \\ i\neq j}}\frac{\mu(s_{i,j})}{g^*(s_{i,j})^2}\bigg)\\
&\bigg(\prod_{\substack{i\in I, j\in J}}\frac{\mu(s_{i,j})\mu(s_{j,i})}{f^*(s_{i,j})f^*(s_{j,i})g^*(s_{i,j})g^*(s_{j,i})}\bigg)y_{a_1,\ldots,a_k}^{(J,p_1,m)}y_{b_1,\ldots,b_k}^{(J,p_2,m)}.
\end{align}
We now wish to reduce to the case when $(a_m,p_1)=(b_m,p_2)=1.$ Indeed, we will show the contribution from the alternative cases is negligible. Of course, depending on whether of not $p_1=1$ and/or $p_2=1,$ some (or all) of the analysis which follows is not necessary, and this accounts for the convention we assert in the statement of the proposition. First let us note the estimate
\begin{equation}\label{eq:rand55}
\frac{1}{f^*(p)} = \frac{f(p)}{1-f(p)}=\frac{\frac{1}{p}+O(\frac{1}{p^2})}{1-\frac{1}{p}+O(\frac{1}{p^2})}=\frac{1}{p}+O\bigg(\frac{1}{p^2}\bigg),
\end{equation}
and similarly 
\begin{equation}
\frac{1}{g^{*}(p)}=\frac{1}{p^2}+O\bigg(\frac{1}{p^3}\bigg).
\end{equation}
Now, there are three cases to consider.

\begin{enumerate}
	\item Suppose that $p_1|a_m$ and $p_2|b_m.$ This occurs if and only if $p_1|u_m$ or $p_1|s_{m,j}$ for some $j\neq m,$ and $p_2|u_m$ or $p_2|s_{i,m}$ for some $i\neq m.$ Suppose, for example, that $p_1|u_m$ and $p_2|u_m.$ Moreover let us assume that $m\in J.$ Then one can bound the contribution as follows:
\begin{align} \nonumber
&\ll (y_{\text{max}}^{(J,p_1,m)})(y_{\text{max}}^{(J,p_2,m)})\bigg(\sum_{\substack{u\leq R \\ (u,W)=1 \\ p|u\Rightarrow p\equiv 3\,\,(\text{mod}\,\,4)}}\frac{\mu^2(u)}{f^*(u)}\bigg)^{|I|}\\ \nonumber
&\cdot \bigg(\sum_{\substack{u\leq R \\ (u,W)=1 \\ p|u\Rightarrow p\equiv 3\,\,(\text{mod}\,\,4)}}\frac{\mu^2(u)}{g^*(u)}\bigg)^{|J|-1} \bigg(\sum_{\substack{u_m\leq R \\ (u_m,W)=1 \\ p_1,p_2|u_m \\ p|u_m\Rightarrow p\equiv 3\,\,(\text{mod}\,\,4)}}\frac{\mu^2(u_m)}{g^*(u_m)}\bigg)\\ \nonumber
&\bigg(\sum_{s\geq 1}\frac{\mu^2(s)}{f^*(s)^2}\bigg)^{|I|^2-|I|}\bigg(\sum_{s\geq 1}\frac{\mu^2(s)}{g^*(s)^2}\bigg)^{|J|^2-|J|}\bigg(\sum_{s\geq 1}\frac{\mu^2(s)}{f^*(s)g^*(s)}\bigg)^{2|I||J|} \\ \nonumber
&\ll \frac{(y_{\text{max}}^{(J,p_1,m)})(y_{\text{max}}^{(J,p_2,m)})B^{|I|}}{(p_1p_2/(p_1,p_2))^2}.
\end{align}
It is easy to see that this bound also holds in any of the other possible cases in which $p_1|a_m$ and $p_2|b_m$ and $m\in J.$ If instead $m\notin J,$ then again it is easy to see the contribution is 
$$\ll \frac{(y_{\text{max}}^{(J,p_1,m)})(y_{\text{max}}^{(J,p_2,m)})B^{|I|}}{p_1p_2/(p_1,p_2)},$$
in all possible cases.

	\item Suppose that $p_1|a_m$ and $p_2\nmid b_m.$ If $m\in J,$ then similarly to the above, one can bound the contribution by 
	$$\ll \frac{(y_{\text{max}}^{(J,p_1,m)})(\tilde{y}_{\text{max}}^{(J,p_2,m)})B^{|I|}}{p_1^2}.$$
	If $m\notin J$ then likewise one obtains a contribution
	$$\ll \frac{(y_{\text{max}}^{(J,p_1,m)})(\tilde{y}_{\text{max}}^{(J,p_2,m)})B^{|I|}}{p_1}.$$
	
	\item Finally, the case $p_1\nmid a_m$ and $p_2|b_m$ proceeds as above, interchanging the roles of $p_1$ and $p_2.$
\end{enumerate}

Thus, we may now suppose that $(a_m,p_1)=(b_m,p_2)=1.$ From the support of the $y_{a_1,\ldots,a_k}^{(J,p_1,m)}$ we see there is no contribution from $(s_{i,j},W)\neq 1$ and so either $s_{i,j}=1$ or $s_{i,j}>D_0.$ The contribution from $s_{i,j}>D_0$ with $i,j\in I$ is 
\begin{align} \nonumber
&\ll (\tilde{y}_{\text{max}}^{(J,p_1,m)})(\tilde{y}_{\text{max}}^{(J,p_2,m)})\bigg(\sum_{\substack{u\leq R \\ (u,W)=1 \\ p|u\Rightarrow p\equiv 3\,\,(\text{mod}\,\,4)}}\frac{\mu^2(u)}{f^*(u)}\bigg)^{|I|}\bigg(\sum_{\substack{u\leq R \\ (u,W)=1 \\ p|u\Rightarrow p\equiv 3\,\,(\text{mod}\,\,4)}}\frac{\mu^2(u)}{g^*(u)}\bigg)^{|J|} \\ \nonumber
&\bigg(\sum_{\substack{s_{i,j}\geq D_0 }}\frac{\mu^2(s_{i,j})}{f^*(s_{i,j})^2}\bigg) \bigg(\sum_{s\geq 1}\frac{\mu^2(s)}{f^*(s)^2}\bigg)^{|I|^2-|I|-1}\bigg(\sum_{s\geq 1}\frac{\mu^2(s)}{g^*(s)^2}\bigg)^{|J|^2-|J|}\bigg(\sum_{s\geq 1}\frac{\mu^2(s)}{f^*(s)g^*(s)}\bigg)^{2|I||J|} \\ \nonumber
&\ll \frac{(\tilde{y}_{\text{max}}^{(J,p_1,m)})(\tilde{y}_{\text{max}}^{(J,p_2,m)})B^{|I|}}{D_0},
\end{align}
This contribution will be negligible. The cases $i,j\in J$ and $i\in I,j\in J$ can be treated the same way. This leaves us with a main term
\begin{equation*}
\sum_{\substack{u_1,\ldots,u_k \\ (u_m,p_1p_2)=1}}\frac{(y_{u_1,\ldots,u_k}^{(J,p_1,m)})(y_{u_1,\ldots,u_k}^{(J,p_2,m)})}{\prod_{i\in I}f^*(u_i)\prod_{j\in J}g^*(u_i)}
\end{equation*}
To finish, we claim the contribution from $u_j>1$ is small whenever $j\in J$. Indeed if $u_j>1$ then it must be divisible by a prime $p>D_0$ (with $p\equiv 3\,\,(\text{mod}\,\,4)$). So suppose $|J| \geq 1$ and let $j\in J.$ If $u_j>1$ we get a contribution 
\begin{align} \nonumber
&\ll (\tilde{y}_{\text{max}}^{(J,p_1,m)})(\tilde{y}_{\text{max}}^{(J,p_2,m)}) \bigg(\sum_{\substack{u\leq R \\ (u,W)=1}}\frac{\mu^2(u)}{f^*(u)}\bigg)^{|I|}\bigg(\sum_{\substack{u\leq R}}\frac{\mu^2(u)}{g^*(u)}\bigg)^{|J|-1}\sum_{\substack{p>D_0 \\ p\equiv 3\,\,(\text{mod}\,\,4)}}\bigg(\sum_{\substack{u_j<R \\ p|u_j}}\frac{\mu^2(u_j)}{g^*(u_j)}\bigg)\\ \nonumber
&\ll (\tilde{y}_{\text{max}}^{(J,p_1,m)})(\tilde{y}_{\text{max}}^{(J,p_2,m)})B^{|I|} \sum_{\substack{p>D_0 \\ p\equiv 3\,\,(\text{mod}\,\,4)}}\frac{1}{g^*(p)}\bigg(\sum_{\substack{u_j}}\frac{\mu^2(u_j)}{g^*(u_j)}\bigg) \\ \nonumber
&\ll \frac{(\tilde{y}_{\text{max}}^{(J,p_1,m)})(\tilde{y}_{\text{max}}^{(J,p_2,m)})B^{|I|}}{D_0},
\end{align}
which is small. Putting all of these facts together establishes Proposition~\ref{prop:2}.
\end{proof}

\subsection{Transformation for $y_{r_1,\ldots,r_k}^{(J,p,m)}$ and proof of Lemma~\ref{lemma:generalsieve} parts (i) and (ii)}

Define
\begin{equation*}
y_{r_1,\ldots,r_k}= \bigg(\prod_{i=1}^k\mu(r_i)\varphi(r_i)\bigg)\sum_{\substack{d_1,\ldots, d_k \\ r_i|d_i\,\,\forall i}}\frac{\lambda_{d_1,\ldots,d_k}}{\prod_{i=1}^{k}d_i}.
\end{equation*}
and let $y_{\text{max}}=\sup_{r_1,\ldots,r_k}|y_{r_1,\ldots,r_k}|.$ By the inversion formula (\ref{eq:inversion}), our definition of $\lambda_{d_1,\ldots,d_k}$ in Proposition~\ref{prop:sieve} is equivalent to taking
\begin{equation}
y_{r_1,\ldots,r_k}=F\bigg(\frac{\log{r_1}}{\log{R}},\ldots,\frac{\log{r_k}}{\log{R}}\bigg).
\end{equation}
We now wish to relate the more complicated diagonalisation vectors $y_{r_1,\ldots,r_k}^{(J,p,m)}$ to these simpler vectors. We first deal with the case when $J=\emptyset$, which is straightforward. By inspecting the proof of Proposition~\ref{prop:sieve}, it is clear that we only need to understand this case when $f(p)=1/p.$ 

\begin{lemma}[Relating $y_{r_1,\ldots,r_k}^{(J,p_1,m)}$ to $y_{r_1,\ldots,r_k}$ when $J=\emptyset$]\label{lemma:easycase}
Suppose $y_{r_1,\ldots,r_k}^{(J,p_1,m)}\neq 0, J=\emptyset$ and $m\in \{1,\ldots,k\}.$ Moreover suppose $f(p)=1/p$ for all primes $p$. Then the following hold.
\begin{enumerate}
	\item If $p_1|r_m$ then 
	$$y_{r_1,\ldots,r_k}^{(J,p_1,m)} = y_{r_1,\ldots,r_k}.$$
	\item If $p_1\nmid r_m$ then 
	$$y_{r_1,\ldots,r_k}^{(J,p_1,m)} = \frac{y_{r_1,\ldots,p_1r_m,\ldots,r_k}}{\mu(p_1)\varphi(p_1)}.$$
\end{enumerate}
\end{lemma}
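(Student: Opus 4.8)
The plan is to unwind the definition of $y_{r_1,\ldots,r_k}^{(J,p_1,m)}$ in the case $J=\emptyset$ (so $I=\{1,\ldots,k\}$) with $f(p)=1/p$, and compare it directly with the definition of $y_{r_1,\ldots,r_k}$. First I would record that when $f(p)=1/p$ the Dirichlet series $1/f$ is the function $n\mapsto n$ on squarefree $n$, so $f^{*}=\mu * \mathrm{id} = \varphi$ on squarefree integers (here everything is supported on squarefree arguments coprime to $W$). Hence
\begin{equation*}
y_{r_1,\ldots,r_k}^{(\emptyset,p_1,m)} = \bigg(\prod_{i=1}^{k}\mu(r_i)\varphi(r_i)\bigg)\sum_{\substack{d_1,\ldots,d_k \\ r_i\mid d_i\,\forall i \\ p_1\mid d_m}}\frac{\lambda_{d_1,\ldots,d_k}}{\prod_{i=1}^{k}d_i},
\end{equation*}
which differs from $y_{r_1,\ldots,r_k}$ only in the extra divisibility condition $p_1\mid d_m$ on the inner sum.

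Next I would split into the two stated cases according to whether $p_1\mid r_m$. If $p_1\mid r_m$, then the condition $r_m\mid d_m$ already forces $p_1\mid d_m$, so the extra condition $p_1\mid d_m$ is vacuous and the inner sum is unchanged; thus $y_{r_1,\ldots,r_k}^{(\emptyset,p_1,m)}=y_{r_1,\ldots,r_k}$, giving part (1). If $p_1\nmid r_m$, then write $d_m = p_1 d_m'$ — this is legitimate because the $\lambda$'s are supported on squarefree tuples, $p_1\equiv 3\,(\mathrm{mod}\,4)$ and is coprime to $W$, so $d_m$ being divisible by $p_1$ and $r_m$, which are coprime, is the same as $p_1 r_m\mid d_m$. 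Reindexing the inner sum by replacing the divisibility condition $r_m\mid d_m,\ p_1\mid d_m$ with $p_1 r_m\mid d_m$, one gets exactly the inner sum appearing in the definition of $y_{r_1,\ldots,p_1 r_m,\ldots,r_k}$, up to the prefactor $\prod_i\mu(r_i)\varphi(r_i)$ versus $\mu(p_1 r_m)\varphi(p_1 r_m)\prod_{i\neq m}\mu(r_i)\varphi(r_i)$. Since $\mu$ and $\varphi$ are multiplicative and $(p_1,r_m)=1$, these prefactors differ precisely by the factor $\mu(p_1)\varphi(p_1)$, so
\begin{equation*}
y_{r_1,\ldots,r_k}^{(\emptyset,p_1,m)} = \frac{y_{r_1,\ldots,p_1 r_m,\ldots,r_k}}{\mu(p_1)\varphi(p_1)},
\end{equation*}
which is part (2). (The case $p_1=1$ is trivial and consistent with both formulae.)

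I do not anticipate a serious obstacle here; the lemma is essentially bookkeeping. The one point that needs a little care is the claim that adjoining $p_1\mid d_m$ to $r_m\mid d_m$ is the same as $p_1 r_m\mid d_m$: this relies on $(p_1,r_m)=1$, which holds because $y_{r_1,\ldots,r_k}^{(\emptyset,p_1,m)}\neq 0$ forces $\prod_i r_i$ squarefree with all prime factors $\equiv 3\,(\mathrm{mod}\,4)$, so if $p_1\mid r_m$ we are in case (1), and otherwise $(p_1,r_m)=1$ automatically. I would also note explicitly that all sums are over squarefree moduli coprime to $W$ with prime factors $\equiv 3\,(\mathrm{mod}\,4)$, matching the support of $\lambda_{d_1,\ldots,d_k}$, so that the identity $f^{*}=\varphi$ is being used only where it is valid.
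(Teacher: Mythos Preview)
Your proposal is correct and follows exactly the paper's approach: observe that $f^*=\varphi$ when $f(p)=1/p$, write out the resulting formula for $y_{r_1,\ldots,r_k}^{(\emptyset,p_1,m)}$, and compare directly with the definition of $y_{r_1,\ldots,r_k}$. In fact you give more detail than the paper, which simply says ``the result then follows by comparing with the definition of $y_{r_1,\ldots,r_k}$'' after displaying the formula.
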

\begin{proof}
If $f(p)=1/p$ then $f^{*}(p) = p-1 = \varphi(p).$ Hence we are assuming 
$$y_{r_1,\ldots,r_k}^{(J,p,m)} = \bigg(\prod_{i=1}^{k} \mu(r_i)\varphi(r_i)\bigg) \sum_{\substack{d_1,\ldots,d_k \\ r_i|d_i\,\,\forall i \\ p|d_m}}\frac{\lambda_{d_1,\ldots,d_k}}{\prod_{i=1}^{k}d_i}.$$
The result then follows by comparing with the definition of $y_{r_1\ldots,r_k}$ given above.
\end{proof}

Thus, proceeding, we may suppose that $|J| \geq 1.$ We may further suppose that $f(p)\neq 1/p$ as this case is of no interest to us (again, this is clear by inspecting the proof of Proposition~\ref{prop:sieve}). The following proposition gives the result in full.

\begin{prop}[Relating $y_{r_1,\ldots,r_k}^{(J,p_1,m)}$ to $y_{r_1,\ldots,r_k}$ when $|J| \geq 1$]\label{prop:newvecoldvec} Suppose $y_{r_1,\ldots,r_k}^{(J,p_1,m)}\neq 0, |J| \geq 1$ and $f(p)\neq 1/p$. Then the following hold:
	\begin{enumerate}[label=(\roman*)]
	\item If $m\in J$ and $(r_m,p)=1$ we have
	\begin{align*} \nonumber
	y_{r_1,\ldots,r_k}^{(J,p_1,m)} &= \frac{\mu(p_1)p_1g(p_1)}{\varphi(p_1)}\bigg(\prod_{j\in J}\frac{r_jg(r_j)g^{*}(r_j)}{g^{**}(r_j)}\bigg) \\
	&\cdot \sum_{\substack{e_1,\ldots,e_m',\ldots,e_k \\ r_i|e_i\,\,\forall i\neq m \\ r_m|e_m' \\ e_i=r_i\,\,\forall i\in I}}y_{e_1,\ldots,p_1e_m',\ldots,e_k}\bigg(\prod_{\substack{j\in J \\ j\neq m}}\frac{g^{**}(e_j)}{\varphi(e_j)}\bigg)\bigg(\frac{g^{**}(e_m')}{\varphi(e_m')}\bigg)+O\bigg(\frac{y_{\text{max}}B^{|J|}\log\log{R}}{D_0p_1^2}\bigg).
\end{align*}
	\item If $m\notin J$ and $(r_m,p)=1$ we have
	\begin{align*}
	y_{r_1,\ldots,r_k}^{(J,p_1,m)} &= \frac{\mu(p_1)p_1f(p_1)}{\varphi(p_1)}\bigg(\prod_{j\in J}\frac{r_jg(r_j)g^{*}(r_j)}{g^{**}(r_j)}\bigg)  \\
	&\cdot\sum_{\substack{e_1,\ldots,e_m',\ldots,e_k \\ r_i|e_i\,\,\forall i\neq m \\ e_i=r_i\,\,\forall i\in I\backslash\{m\} \\ e'_m = r_m}}y_{e_1,\ldots,p_1e_m',\ldots,e_k}\bigg(\prod_{j\in J}\frac{g^{**}(e_j)}{\varphi(e_j)}\bigg)+O\bigg(\frac{y_{\text{max}}B^{|J|}\log\log{R}}{D_0p_1}\bigg).
\end{align*}
	\item If $p_1|r_m$ we have 
	\begin{align*}
	y_{r_1,\ldots,r_k}^{(J,p_1,m)} &= \bigg(\prod_{j\in J}\frac{r_jg(r_j)g^{*}(r_j)}{g^{**}(r_j)}\bigg)  \\
	&\cdot\sum_{\substack{e_1,\ldots,e_k \\ r_i|e_i\,\,\forall i \\ e_i=r_i\,\,\forall i\in I}}y_{e_1,\ldots,e_k}\bigg(\prod_{j\in J}\frac{g^{**}(e_j)}{\varphi(e_j)}\bigg)+O\bigg(\frac{y_{\text{max}}B^{|J|}\log\log{R}}{D_0}\bigg).
\end{align*}	
	\end{enumerate}
Here $f^{**}$ and $g^{**}$ are defined by the convolutions
$$f^{**}= \iota \mu f *1,\,\,\,\,\,g^{**} = \iota \mu g*1,$$
where $\iota$ is the identity function, $\iota(p)=p$.
\end{prop}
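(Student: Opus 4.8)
The plan is to follow the combinatorial scheme of \cite{Maynard}: re-express $\lambda_{d_1,\ldots,d_k}$ in terms of the basic vectors $y_{r_1,\ldots,r_k}$, substitute into the definition of $y_{r_1,\ldots,r_k}^{(J,p_1,m)}$, and simplify the resulting multidimensional sum. By the inversion identity~(\ref{eq:inversion}) applied with $f(p)=1/p$ (so that $f^{*}(p)=\varphi(p)$), the weights of Proposition~\ref{prop:sieve} are equivalent to
\[
\lambda_{d_1,\ldots,d_k}=\bigg(\prod_{l=1}^{k}\mu(d_l)d_l\bigg)\sum_{\substack{e_1,\ldots,e_k\\ d_l\mid e_l\ \forall l}}\frac{y_{e_1,\ldots,e_k}}{\prod_{l=1}^{k}\varphi(e_l)}.
\]
First I would substitute this into the definition of $y_{r_1,\ldots,r_k}^{(J,p_1,m)}$ and interchange the order of summation so that the sum over $e_1,\ldots,e_k$ is outermost. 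The inner sum over $d_1,\ldots,d_k$ then factorises over the index $l$, since all moduli are squarefree and $\prod_l e_l$ squarefree forces the $e_l$ to be pairwise coprime. For $l\neq m$ one uses the elementary evaluations $\sum_{r_l\mid d_l\mid e_l}\mu(d_l)d_l f(d_l)=\mu(r_l)r_l f(r_l)f^{**}(e_l/r_l)$ when $l\in I$, and the same with $g,g^{**}$ when $l\in J$; at the index $m$ the extra condition $p_1\mid d_m$ pulls out one more factor, $\mu(p_1)p_1 f(p_1)$ or $\mu(p_1)p_1 g(p_1)$, the bookkeeping depending on whether $p_1\mid r_m$ and whether $m\in J$.

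Next I would isolate the main term. Multiplying the prefactor $\prod_{i\in I}\mu(r_i)f^{*}(r_i)\prod_{j\in J}\mu(r_j)g^{*}(r_j)$ by the local evaluations, writing $\varphi(e_l)=\varphi(r_l)\varphi(e_l/r_l)$, and using the identities $\frac{f^{*}(p)pf(p)}{\varphi(p)}=1+\frac{f^{**}(p)}{\varphi(p)}=1+O(p^{-2})$ and $\frac{g^{*}(r_j)r_j g(r_j)g^{**}(e_j/r_j)}{\varphi(e_j)}=\frac{r_j g(r_j)g^{*}(r_j)}{g^{**}(r_j)}\cdot\frac{g^{**}(e_j)}{\varphi(e_j)}$, the coefficient of $y_{e_1,\ldots,e_k}$ splits into a part depending only on the $r_l$ and one depending on the $e_l/r_l$. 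For each $i\in I$ the latter factor equals $f^{**}(e_i/r_i)/\varphi(e_i/r_i)$, which is $1$ when $e_i=r_i$ and $O(p^{-2})$ as soon as a prime $p\mid e_i/r_i$ (necessarily $p>D_0$, since $y_{e_1,\ldots,e_k}$ is supported on $(e,W)=1$); and the product $\prod_{i\in I}\frac{f^{*}(r_i)r_if(r_i)}{\varphi(r_i)}=\prod_{p\mid\prod_{i\in I} r_i}(1+O(p^{-2}))$ differs from $1$ by $O(D_0^{-1})$ because $\sum_{p>D_0}p^{-2}\ll D_0^{-1}$. Discarding these $O(D_0^{-1})$ discrepancies into the error and keeping $e_i=r_i$ for $i\in I$, one is left with exactly the three stated expressions: in case~(iii) the hypothesis $p_1\mid r_m$ makes $p_1\mid d_m$ automatic, so no $p_1$-factor is produced; in cases~(i) and~(ii) the factor $\frac{\mu(p_1)p_1 g(p_1)}{\varphi(p_1)}$ (resp. with $f$, since then $m\in I$) is extracted and the surviving free sum runs over the $e_j$ with $j\in J$, together with $e_m'$ in case~(i).

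Finally I would make the error estimate explicit. Every error term arises from forcing some $e_i/r_i>1$ with $i\in I$, or the analogous discrepancy at index $m$ in cases~(ii)--(iii), and each such restriction introduces a prime $p>D_0$ with $p\equiv 3\ (\mathrm{mod}\ 4)$; bounding the remaining variables with the sieve-sum estimates recorded after Lemma~\ref{key}, namely $\sum_{d\le R}\mu^{2}(d)f(d)\ll B$ and $\sum_{d\le R}\mu^{2}(d)g(d)\ll 1$, together with $\sum_{p>D_0}\frac{\log p}{p^{2}}\ll\frac{\log D_0}{D_0}$ and $\sum_{D_0<p\le R}\frac1p\ll\log\log R$, gives the claimed $O\!\big(y_{\text{max}}B^{|J|}\log\log R\,/\,(D_0 p_1^{2})\big)$ in case~(i), with $p_1^{2}$ replaced by $p_1$ in case~(ii) and by $1$ in case~(iii) --- the power of $p_1$ just recording the size $\frac{p_1 g(p_1)}{\varphi(p_1)}\asymp p_1^{-2}$, respectively $\frac{p_1 f(p_1)}{\varphi(p_1)}\asymp p_1^{-1}$, of the extracted prefactor. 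The main obstacle I anticipate is organisational rather than conceptual: one must push the $p_1\mid d_m$ condition, the support restrictions on both the $\lambda$'s and the $y$'s, and the four-way case split ($p_1\mid r_m$ or not, $m\in J$ or not) through the factorisation of the inner sum with precisely the right coprimality constraints, so that the prefactors come out exactly as stated; once the factorisation is organised correctly the remaining estimates are routine given Section~\ref{section:sievetechnical}.
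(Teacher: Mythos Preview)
Your proposal is correct and follows essentially the same approach as the paper: substitute the inversion formula for $\lambda_{d_1,\ldots,d_k}$, swap sums, evaluate the inner sum via the convolution identities for $f^{**},g^{**}$, then discard the contributions with $e_i\neq r_i$ for $i\in I$ and absorb the product $\prod_{i\in I}\frac{r_if(r_i)f^{*}(r_i)}{\varphi(r_i)}=1+O(D_0^{-1})$ into the error. One small imprecision: the $\log\log R$ factor in the paper's error term comes from bounding the prefactor $\prod_{j\in J}\frac{r_jg(r_j)g^{*}(r_j)}{g^{**}(r_j)}\ll\prod_{j\in J}\frac{r_j}{\varphi(r_j)}\ll\log\log R$ via the standard estimate $n/\varphi(n)\ll\log\log n$, rather than from $\sum_{D_0<p\le R}p^{-1}$; this is a cosmetic difference and does not affect the argument.
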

%
%

\begin{proof}
We prove (i), with the rest proved in exactly the same way. Directly from the definition~(\ref{eq:diagvec}) we have
\begin{align}\label{eq:rand12}
\frac{y_{r_1,\ldots,r_k}^{(J,p_1,m)}}{\prod_{i\in I}\mu(r_i)f^*(r_i)\prod_{j\in J}\mu(r_j)g^*(r_j)} &=\sum_{\substack{d_1,\ldots, d_k \\ r_i|d_i\,\,\forall i \\ p_1|d_m}}\lambda_{d_1,\ldots,d_k}\prod_{i\in I}f(d_i)\prod_{j\in J}g(d_j). 
\end{align}
From the inversion formula~(\ref{eq:inversion}) and the definition of $y_{r_1,\ldots,r_k},$ we see that the right hand side of~(\ref{eq:rand12}) equals
\begin{align*}
&\sum_{\substack{d_1,\ldots, d_k \\ r_i|d_i\,\,\forall i \\ p_1|d_m}}\bigg(\prod_{i\in I}\mu(d_i)d_if(d_i)\bigg)\bigg(\prod_{j\in J}\mu(d_j)d_jg(d_j)\bigg) \sum_{\substack{e_1,\ldots,e_k \\ d_i|e_i\,\,\forall i }}\frac{y_{e_1,\ldots,e_k}}{\prod_{i=1}^{k}\varphi(e_i)}. 
\end{align*}
Swapping sums we obtain
\begin{align}\label{eq:rand13}
 \sum_{\substack{e_1,\ldots,e_k \\ r_i|e_i\,\,\forall i \\ p_1|e_m}}\frac{y_{e_1,\ldots,e_k}}{\prod_{i=1}^{k}\varphi(e_i)}\sum_{\substack{d_1,\ldots, d_k \\ r_i|d_i\,\,\forall i \\ d_i|e_i\,\,\forall i\\ p_1|d_m}}\bigg(\prod_{i\in I}\mu(d_i)d_if(d_i)\bigg)\bigg(\prod_{j\in J}\mu(d_j)d_jg(d_j)\bigg).
\end{align}
We can evaluate the inner sums using the convolution identities
\begin{equation*}
f^{**}(n)=\sum_{d|n}\mu(d)df(d),\,\,\,\,\,g^{**}(n)=\sum_{d|n}\mu(d)dg(d).
\end{equation*}
We note that
\begin{equation}\label{eq:rand51}
f^{**}(p) = 1-pf(p)=O\bigg(\frac{1}{p}\bigg),
\end{equation}
and similarly
\begin{equation}\label{eq:rand52}
g^{**}(p) = 1-pg(p)=1+O\bigg(\frac{1}{p}\bigg).
\end{equation}
With our assumption $f(p)\neq 1/p,$ we may suppose both of these functions are non-zero. Now, recall that we are assuming $m\in J.$ Using these identities transforms ~(\ref{eq:rand13}) into
\begin{align*}
\frac{\mu(p_1)p_1g(p_1)}{g^{**}(p_1)} &\bigg(\prod_{i\in I}\frac{\mu(r_i)r_if(r_i)}{f^{**}(r_i)}\bigg)\bigg(\prod_{j\in J}\frac{\mu(r_j)r_jg(r_j)}{g^{**}(r_j)}\bigg)  \\
&\cdot\sum_{\substack{e_1,\ldots,e_k \\ r_i|e_i\,\,\forall i \\ p_1|e_m}}y_{e_1,\ldots,e_k}\bigg(\prod_{i\in I}\frac{f^{**}(e_i)}{\varphi(e_i)}\bigg)\bigg(\prod_{j\in J}\frac{g^{**}(e_j)}{\varphi(e_j)}\bigg).
\end{align*}
Here we are using the fact $y_{e_1,\ldots,e_k}$ is supported on square-free integers $e=\prod_{i=1}^{k}e_i.$ Hence, from~(\ref{eq:rand12}), it follows that 
\begin{align*} \nonumber
y_{r_1,\ldots,r_k}^{(J,p_1,m)} &= \frac{\mu(p_1)p_1g(p_1)}{\varphi(p_1)}  \bigg(\prod_{i\in I}\frac{r_if(r_i)f^{*}(r_i)}{f^{**}(r_i)}\bigg)\bigg(\prod_{j\in J}\frac{r_jg(r_j)g^{*}(r_j)}{g^{**}(r_j)}\bigg) \\
&\cdot \sum_{\substack{e_1,\ldots,e_m',\ldots e_k \\ r_i|e_i\,\,\forall i\neq m \\ r_m|e_m'}}y_{e_1,\ldots,p_1e_m',\ldots e_k}\bigg(\prod_{i\in I}\frac{f^{**}(e_i)}{\varphi(e_i)}\bigg)\bigg(\prod_{\substack{j\in J \\ j\neq m}}\frac{g^{**}(e_j)}{\varphi(e_j)}\bigg)\frac{g^{**}(e_m')}{\varphi(e_m')}.
\end{align*}
Here we have substituted $e_m= p_1 e_m'$ and used the fact $(r_m,p)=1.$ Now, since
\begin{equation*}
\frac{pf(p)f^{*}(p)}{\varphi(p)}= \frac{p}{p-1}(1-f(p)) = \frac{p}{p-1}\bigg[1-\frac{1}{p}+O\bigg(\frac{1}{p^2}\bigg)\bigg]= 1+O\bigg(\frac{1}{p^2}\bigg),
\end{equation*}
it follows that
\begin{equation}\label{eq:estimate1}
\sup_{r_1,\ldots,r_k}\prod_{j\in J}\frac{r_j f(r_j)f^{*}(r_j)}{\varphi(r_j)} \ll 1.
\end{equation}
Similarly, since 
$$\frac{p g(p)g^{*}(p)}{\varphi(p)} = \frac{p}{p-1}(1-g(p)) = \frac{p}{p-1}\bigg[1+O\bigg(\frac{1}{p^2}\bigg)\bigg],$$
we have the bound
\begin{align}\label{eq:estimate2}
\sup_{r_1,\ldots,r_k}\prod_{j\in J}\frac{r_j g(r_j)g^{*}(r_j)}{\varphi(r_j)} &\ll \sup_{r_1,\ldots,r_k}\prod_{j\in J}\frac{r_j}{\varphi(r_j)} \ll \log\log{R}.
\end{align}
Here we have used the fact $r = \prod_{i=1}^{k}r_i \leq R$ and the standard estimate
$$\frac{n}{\varphi(n)} \ll \log\log{n}.$$
Now, if $i\neq m$ then either $e_i=r_i$ or $e_i>D_0 r_i$. Suppose $e_{i_0}>D_0r_{i_0}$ for some $i_0\in I.$ By first using multiplicativity of the sum over the $e_i$ variables, and then using estimates~(\ref{eq:estimate1}) and~(\ref{eq:estimate2}), we see that that this gives a contribution 
\begin{align}\nonumber
&\ll \frac{y_{\text{max}}\log\log{R}}{p_1^2} \bigg(\sum_{\substack{e_{i_0}> D_0 \\ (e_{i_0},W)=1}}\frac{\mu^2(e_{i_0})f^{**}(e_{i_0})}{\varphi(e_{i_0})}\bigg) \\ \nonumber
&\cdot\bigg(\sum_{\substack{u\leq R \\ (u,W)=1 \\ p|u\Rightarrow p\equiv 3\,\,(\text{mod}\,\,4)}}\frac{\mu^2(u)f^{**}(u)}{\varphi(u)}\bigg)^{|I|-1}\bigg(\sum_{\substack{u\leq R \\ (u,W)=1 \\ p|u\Rightarrow p\equiv 3\,\,(\text{mod}\,\,4)}}\frac{\mu^2(u)g^{**}(u)}{\varphi(u)}\bigg)^{|J|} \\ \nonumber
&\ll \frac{y_{\text{max}}B^{|J|}\log\log{R}}{D_0p_1^2},
\end{align}
where we have used the fact $f^{**}(p)/\varphi(p)=O(1/p^2)$ and $g^{**}(p)/\varphi(p)=1/p+O(1/p^2),$ which follows from (\ref{eq:rand51}) and (\ref{eq:rand52}) respectively. This is small. Thus 
\begin{align*}
y_{r_1,\ldots,r_k}^{(J,p_1,m)} &= \frac{\mu(p_1)p_1g(p_1)}{\varphi(p_1)}\bigg(\prod_{i\in I}\frac{r_if(r_i)f^{*}(r_i)}{\varphi(r_i)}\bigg)\bigg(\prod_{j\in J}\frac{r_jg(r_j)g^{*}(r_j)}{g^{**}(r_j)}\bigg)\\
&\cdot \sum_{\substack{e_1,\ldots,e_m',\ldots,e_k \\ r_i|e_i\,\,\forall i\neq m \\ r_m|e_m' \\ e_i=r_i\,\,\forall i\in I}}y_{e_1,\ldots,p_1e_m',\ldots e_k}\bigg(\prod_{\substack{j\in J \\ j\neq m}}\frac{g^{**}(e_j)}{\varphi(e_j)}\bigg)\bigg(\frac{g^{**}(e_m')}{\varphi(e_m')}\bigg) +O\bigg(\frac{y_{\text{max}}B^{|J|}\log\log{R}}{D_0p_1^2}\bigg).
\end{align*}
From the support restrictions on $y_{r_1,\ldots,r_k}^{(J,p_1,m)}$ we necessarily have $(r_i,W)=1.$ From the above, we see the first product may be replaced by $1+O(D_0^{-1}).$ This incurs an acceptable error 
\begin{align*}
\ll \frac{y_{\text{max}}\log\log{R}}{D_0p_1^2} \bigg(\sum_{\substack{u\leq R \\ (u,W)=1 \\ p|u\Rightarrow p \equiv 3\,\,(\text{mod}\,\,4)}}\frac{\mu^2(u) g^{**}(u)}{\varphi(u)}\bigg)^{|J|} \ll \frac{y_{\text{max}}B^{|J|}\log\log{R}}{D_0p_1^2},
\end{align*}
which gives the result stated. 
\end{proof}


We record the following useful corollary.

\begin{corollary}\label{corollary:errorterms}
With notation as in the statement of Proposition~\ref{prop:2}, the following estimates hold.
\begin{enumerate}
	\item If $m\in J$ then
	$$\tilde{y}_{\text{max}}^{(J,p_1,m)} \ll \frac{y_{\text{max}}B^{|J|}\log\log{R}}{p_1^2}.$$
	\item If $m\notin J$ then
	$$\tilde{y}_{\text{max}}^{(J,p_1,m)} \ll \frac{y_{\text{max}}B^{|J|}\log\log{R}}{p_1}.$$	
	\item We have
	$$y_{\text{max}}^{(J,p,m)} \ll y_{\text{max}}B^{|J|}\log\log{R}.$$
\end{enumerate}
\end{corollary}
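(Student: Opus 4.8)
The plan is to deduce all three bounds directly from Proposition~\ref{prop:newvecoldvec} (when $|J|\geq 1$) and from Lemma~\ref{lemma:easycase} (when $J=\emptyset$), feeding in only elementary size estimates for the auxiliary arithmetic functions. First I would collect those estimates: from $g(p)=1/p^2+O(1/p^3)$ one gets $g^{*}(p)=1/g(p)-1=p^2+O(p)$ and $g^{**}(p)=1-pg(p)=1+O(1/p)$, while $f(p)=1/p+O(1/p^2)$ gives $g^{**}(p)/\varphi(p)=1/p+O(1/p^2)$, $p_1 g(p_1)/\varphi(p_1)\ll 1/p_1^2$ and $p_1 f(p_1)/\varphi(p_1)\ll 1/p_1$. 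The crucial observation is that, for squarefree $r_j$ supported on primes $\equiv 3\pmod 4$,
$$\frac{r_j g(r_j)g^{*}(r_j)}{g^{**}(r_j)}\cdot\frac{g^{**}(r_j)}{\varphi(r_j)}=\frac{r_j g(r_j)g^{*}(r_j)}{\varphi(r_j)},$$
which is exactly the product already bounded by $O(\log\log R)$ in~(\ref{eq:estimate2}); thus the two $r_j$-dependent factors appearing in Proposition~\ref{prop:newvecoldvec} combine harmlessly. I would also use that $\sum_{s}\mu^2(s)g^{**}(s)/\varphi(s)\ll B$, the sum running over squarefree $s\leq R$ coprime to $W$ with prime factors $\equiv 3\pmod 4$, which is the first of the two estimates highlighted just after Lemma~\ref{key}.

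For part (1), fix $r_1,\ldots,r_k$ with $(r_m,p_1)=1$ and invoke Proposition~\ref{prop:newvecoldvec}(i). In the displayed main term the leading factor $\mu(p_1)p_1 g(p_1)/\varphi(p_1)$ is $O(1/p_1^2)$; bounding $|y_{e_1,\ldots,e_k}|\leq y_{\text{max}}$ and writing each $e_j$ ($j\in J$, including the cofactor $e_m'$ of $r_m$) as $r_j$ times a coprime squarefree part, the inner sum factorises over the $|J|$ free variables into $\prod_{j\in J}(g^{**}(r_j)/\varphi(r_j))$ times $|J|$ copies of the sum $\ll B$ above, hence is $\ll y_{\text{max}}B^{|J|}\prod_{j\in J}g^{**}(r_j)/\varphi(r_j)$. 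Multiplying the three pieces and using the displayed identity together with~(\ref{eq:estimate2}) gives $|y^{(J,p_1,m)}_{r_1,\ldots,r_k}|\ll y_{\text{max}}B^{|J|}(\log\log R)/p_1^2$; the error term in Proposition~\ref{prop:newvecoldvec}(i) has the same shape, and taking the supremum over admissible $(r_i)$ yields (1). Statement (2) for $|J|\geq 1$ is identical via Proposition~\ref{prop:newvecoldvec}(ii), the only change being that the leading factor is now $O(1/p_1)$; and for $J=\emptyset$ (where necessarily $m\in I$) Lemma~\ref{lemma:easycase}(2) gives $y^{(\emptyset,p_1,m)}_{r_1,\ldots,r_k}=y_{r_1,\ldots,p_1 r_m,\ldots,r_k}/(\mu(p_1)\varphi(p_1))$, so its modulus is $\leq y_{\text{max}}/\varphi(p_1)\ll y_{\text{max}}/p_1$, which is $\ll y_{\text{max}}B^{0}(\log\log R)/p_1$ since $B\geq 1$ and $\log\log R\geq 1$.

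For part (3) I would split on whether $p\mid r_m$. When $(r_m,p)=1$ (which includes $p=1$) the bound is immediate from (1)/(2) upon discarding the factor $1/p_1$ or $1/p_1^2$. When $p\mid r_m$ and $|J|\geq 1$, Proposition~\ref{prop:newvecoldvec}(iii) applies and the same computation — outer factor $\prod_{j\in J}r_j g(r_j)g^{*}(r_j)/g^{**}(r_j)$ against an inner sum $\ll y_{\text{max}}B^{|J|}\prod_{j\in J}g^{**}(r_j)/\varphi(r_j)$, collapsed by~(\ref{eq:estimate2}) — gives $\ll y_{\text{max}}B^{|J|}\log\log R$, matching the error term there; when $p\mid r_m$ and $J=\emptyset$, Lemma~\ref{lemma:easycase}(1) gives $y^{(\emptyset,p_1,m)}_{r_1,\ldots,r_k}=y_{r_1,\ldots,r_k}$, of modulus $\leq y_{\text{max}}$. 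Taking suprema then gives (3). I do not anticipate a genuine obstacle: the substance is already encapsulated in Proposition~\ref{prop:newvecoldvec}, and what remains is bookkeeping — tracking the power of $p_1$ in each prefactor, noticing the cancellation of $g^{**}(r_j)$ against $g^{**}(r_j)$ so that~(\ref{eq:estimate2}) is directly applicable, and checking the degenerate cases $J=\emptyset$, $p=1$ and $p\mid r_m$ separately. The one point meriting care is confirming that the sum over the cofactors contributes exactly one factor of $B$ per index of $J$ (hence $B^{|J|}$ and no more), which is precisely the content of the first highlighted consequence of Lemma~\ref{key}.
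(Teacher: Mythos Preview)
Your proposal is correct and follows exactly the route the paper intends: the corollary is an immediate consequence of Lemma~\ref{lemma:easycase} and Proposition~\ref{prop:newvecoldvec} (the paper's reference to Proposition~\ref{prop:2} in the one-line proof appears to be a typo), and you have filled in precisely the bookkeeping the paper omits, including the key use of~(\ref{eq:estimate2}) after cancelling the $g^{**}(r_j)$ factors and the separate treatment of the degenerate cases $J=\emptyset$, $p_1=1$, and $p\mid r_m$.
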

\begin{proof}
This follows easily from  Lemma~\ref{lemma:easycase} and Proposition~\ref{prop:2}. (We note that in the special case $J=\emptyset,$ in items (2) and (3) we could drop the extra $\log\log{R}$ factor if required.)
\end{proof}

We are now in a position to prove the first two parts of Lemma~\ref{lemma:generalsieve}. 

\begin{proof}[Proof of Lemma~\ref{lemma:generalsieve} parts (i) and (ii)]
We prove part (i), in the case $m\in J.$ The rest of the argument proceeds along the same lines. From Proposition~\ref{prop:newvecoldvec} we have
\begin{equation*}
S_{J,p_1,p_2,m} = \sum_{\substack{u_1,\ldots,u_k \\ (u_m,p_1p_2)=1 \\ u_j=1\,\,\forall j\in J}}\frac{(y_{u_1,\ldots,u_k}^{(J,p_1,m)})(y_{u_1,\ldots,u_k}^{(J,p_2,m)})}{\prod_{i\in I}f^*(u_i)\prod_{j\in J}g^*(u_j)}+O\bigg(\frac{y_{\text{max}}^2B^{k+|J|}(\log\log{R})^2}{(p_1p_2/(p_1,p_2))^2}\bigg).
\end{equation*}
Here we have used Corollary~\ref{corollary:errorterms} to control the various error terms in the statement of the proposition. We have also used the fact $|I|+|J|=k.$ It follows that
\begin{align*}
S_{J,p_1,p_2,m} &\ll (\tilde{y}_{\text{max}}^{(J,p_1,m)})(\tilde{y}_{\text{max}}^{(J,p_2,m)})\bigg(\sum_{\substack{u\leq R \\ (u,W)=1 \\ p|u\Rightarrow p\equiv 3\,\,(\text{mod}\,\,4)}} \frac{\mu^2(u)}{f^*(u)}\bigg)^{|I|} +\frac{y_{\text{max}}^2B^{k+|J|}(\log\log{R})^2}{(p_1p_2/(p_1,p_2))^2} \\
&\ll \frac{y_{\text{max}}^2B^{k+|J|}(\log\log{R})^2}{(p_1p_2/(p_1,p_2))^2},
\end{align*}
again using Corollary~\ref{corollary:errorterms}. This gives the result, as required.
\end{proof}

From now on we are only interested in the sums $S_{J} = S_{J,1,1,m},$ and in particular the cases \hbox{$|J|\in \{0,1, 2\}$.} For ease of notation we let $y_{r_1,\ldots,r_k}^{(J)} = y_{r_1,\ldots,r_k}^{(J,1,m)}$ and $y_{\text{max}}^{(J)} = \sup_{r_1,\ldots,r_k} |y_{r_1,\ldots,r_k}^{(J)}|.$ For future reference we note the bound
\begin{equation}\label{eq:ymaxJ}
y_{\text{max}}^{(J)} \ll y_{\text{max}} B^{|J|}\log\log{R},
\end{equation}
proved above.

\subsection{Relating vectors to functionals and proof of Lemma~\ref{lemma:generalsieve} part (iii)} We first prove the following corollary that follows from Proposition~\ref{prop:newvecoldvec}.

\begin{corollary}[Relating $y_{r_1,\ldots,r_k}^{(J)}$ to integral operators]\label{cor:2}
Let $y_{r_1,\ldots,r_k}$ be defined in terms of a fixed, smooth function $F$, supported on ${R}_k=\{\vec{x}\in[0,1]^k:\sum_{i=1}^{k}x_i\leq 1\}$, by
\begin{equation*}
y_{r_1,\ldots,r_k}=F\bigg(\frac{\log{r_1}}{\log{R}},\ldots,\frac{\log{r_k}}{\log{R}}\bigg).
\end{equation*}
Let
\begin{equation*}
F_{\text{max}} = \sup_{(t_1,\ldots,t_k)\in[0,1]^k}|F(t_1,\ldots,t_k)|+\sum_{i=1}^{k}|\frac{\partial F}{\partial t_i}(t_1,\ldots,t_k)|.
\end{equation*}
Define the integral operators
\begin{align*}
I_{r_1,\ldots,r_k; m}(F) &= \int_0^1F\bigg(\frac{\log{r_1}}{\log{R}},\ldots,x_m,\ldots,\frac{\log{r_k}}{\log{R}}\bigg)\frac{\mathrm{d}x_m}{\sqrt{x_m}}, \\
I_{r_1,\ldots,r_k; m,l}(F) &= \int_0^1\bigg(\int_0^1F\bigg(\frac{\log{r_1}}{\log{R}},\ldots,x_l,\ldots,x_m,\ldots,\frac{\log{r_k}}{\log{R}}\bigg)\frac{\mathrm{d}x_m}{\sqrt{x_m}}\bigg)\frac{\mathrm{d}x_l}{\sqrt{x_l}}.
\end{align*}
Then
\begin{enumerate}[label=(\roman*)]
	\item if $J=\{m\}$ we have
	$$y_{r_1,\ldots,r_k}^{(J)}\bigg|_{r_j=1\,\,\forall j\in J} = B\bigg(\prod_{i\in I}\frac{\varphi(r_i)}{r_i}\bigg)I_{r_1,\ldots,r_k; m}(F)+ 	O\bigg(\frac{F_{\text{max}}B}{D_0}\bigg).$$
	\item if $J=\{m,l\}$ we have
	$$y_{r_1,\ldots,r_k}^{(J)}\bigg|_{r_j=1\,\,\forall j\in J} = B^2\bigg(\prod_{i\in I}\frac{\varphi(r_i)}{r_i}\bigg)^2I_{r_1,\ldots,r_k; m,l}(F)+ O\bigg(\frac{F_{\text{max}}B^2}{D_0}\bigg).$$
\end{enumerate}
\end{corollary}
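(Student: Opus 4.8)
The plan is to read both statements off Proposition~\ref{prop:newvecoldvec}(i), specialised to $p_1=1$ and to the vectors $y_{r_1,\ldots,r_k}=F(\tfrac{\log r_1}{\log R},\ldots,\tfrac{\log r_k}{\log R})$, then to restrict to $r_j=1$ for $j\in J$ and to evaluate the remaining sum(s) by iterating Lemma~\ref{key}. For part~(i), where $J=\{m\}$, I would choose the distinguished index of Proposition~\ref{prop:newvecoldvec} to be $m$, so that the prefactor $\tfrac{\mu(p_1)p_1g(p_1)}{\varphi(p_1)}$ becomes $1$ when $p_1=1$ and $\prod_{j\in J}\tfrac{r_jg(r_j)g^*(r_j)}{g^{**}(r_j)}$ becomes $1$ when $r_m=1$, leaving
\begin{equation*}
y_{r_1,\ldots,r_k}^{(\{m\})}\Big|_{r_m=1}=\sum_{e}F\Big(\tfrac{\log r_1}{\log R},\ldots,\tfrac{\log e}{\log R},\ldots,\tfrac{\log r_k}{\log R}\Big)\frac{g^{**}(e)}{\varphi(e)}+E_0 ,
\end{equation*}
with $\tfrac{\log e}{\log R}$ in the $m$-th slot and $e$ running over squarefree integers coprime to $W$ all of whose prime factors are $\equiv 3\pmod 4$. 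I would then note that the $\log\log R$ in the error $E_0$ of Proposition~\ref{prop:newvecoldvec}(i) comes solely from the bound $\prod_{j\in J}r_j/\varphi(r_j)\ll\log\log R$ of~(\ref{eq:estimate2}), which is $1$ once $r_j=1$, so that $E_0\ll y_{\max}B/D_0\ll F_{\max}B/D_0$.

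The key point is that the support of the diagonalising vectors forces $\prod_i e_i$ to be squarefree, so $e$ is in addition coprime to $Q:=\prod_{i\in I}r_i$. The displayed sum is then exactly of the shape treated in the proof of Lemma~\ref{key}, but with $W$ replaced by $WQ$ throughout: I would apply Lemma~\ref{lemma:technical} to the sieve-dimension function $\gamma$ supported on primes $p\equiv 3\pmod 4$ with $p\nmid WQ$, using $\tfrac{g^{**}(p)}{\varphi(p)}=\tfrac1p+O(\tfrac1{p^2})$. Because the prime factors of $Q$ all exceed $D_0$ while $\prod_{p\mid Q}p=Q\le R$, the associated sieve constant obeys $L\ll\log D_0+\tfrac{\log R}{D_0}\ll\tfrac{\log R}{D_0}$, so the error term of Lemma~\ref{lemma:technical} is still $O\!\big(c_\gamma(\log R)^{1/2}G_{\max}/D_0\big)=O(F_{\max}B/D_0)$; and, exactly as in Lemma~\ref{lemma:singseries}, the singular series acquires the extra factor $\prod_{p\mid Q}(1-\tfrac1p)=\tfrac{\varphi(Q)}{Q}=\prod_{i\in I}\tfrac{\varphi(r_i)}{r_i}$ (every prime of $Q$ being $\equiv 3\pmod 4$), while $c_\gamma(\log R)^{1/2}/\Gamma(1/2)=B\tfrac{\varphi(Q)}{Q}(1+O(D_0^{-1}))$ by~(\ref{eq:B}). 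This gives $y_{r_1,\ldots,r_k}^{(\{m\})}|_{r_m=1}=B\prod_{i\in I}\tfrac{\varphi(r_i)}{r_i}\,I_{r_1,\ldots,r_k;m}(F)+O(F_{\max}B/D_0)$, which is part~(i).

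For part~(ii), where $J=\{m,l\}$, the same specialisation of Proposition~\ref{prop:newvecoldvec}(i) (distinguished index $m\in J$, $p_1=1$, $r_m=r_l=1$) leaves a double sum over $e_m',e_l$ that are squarefree, coprime to one another and to $WQ$, with prime factors $\equiv 3\pmod 4$, carrying weight $\tfrac{g^{**}(e_m')}{\varphi(e_m')}\tfrac{g^{**}(e_l)}{\varphi(e_l)}$. I would sum first over $e_m'$ (coprime to $WQe_l$) by the step above, producing $B\tfrac{\varphi(Q)}{Q}\tfrac{\varphi(e_l)}{e_l}\big(\int_0^1 F(\ldots)\tfrac{\mathrm{d}x_m}{\sqrt{x_m}}\big)+O\!\big(B\tfrac{\varphi(e_l)}{e_l}F_{\max}/D_0\big)$, where the $l$-th argument of $F$ is still $\tfrac{\log e_l}{\log R}$; here the $\tfrac{\varphi(e_l)}{e_l}$ cancels against $\tfrac{g^{**}(e_l)}{\varphi(e_l)}$, leaving the multiplicative weight $\tfrac{g^{**}(e_l)}{e_l}=\tfrac1p+O(\tfrac1{p^2})$ on primes, so a second application of Lemma~\ref{key} in the variable $e_l$ (coprime to $WQ$) produces another $B\tfrac{\varphi(Q)}{Q}$ and the outer integration, yielding $B^2\big(\prod_{i\in I}\tfrac{\varphi(r_i)}{r_i}\big)^2 I_{r_1,\ldots,r_k;m,l}(F)+O(F_{\max}B^2/D_0)$. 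The accumulated error terms would be controlled using $\sum_{e\le R,\,(e,W)=1,\,p\mid e\Rightarrow p\equiv 3(4)}\mu^2(e)\tfrac{g^{**}(e)}{e}\ll B$, the upper bound recorded just after Lemma~\ref{key}.

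The main obstacle, I expect, will be the bookkeeping around this hidden coprimality $(e,Q)=1$: it is exactly what produces the factor $\prod_{i\in I}\varphi(r_i)/r_i$ in the statement, and one must verify that re-running the Selberg--Delange argument underlying Lemma~\ref{lemma:technical}/Lemma~\ref{key} with the primes of $Q$ (as well as of $W$) deleted from the support both (a) shifts the singular series by precisely $\prod_{p\mid Q}(1-1/p)$ and (b) still keeps the error $O(F_{\max}B/D_0)$, even though the sieve constant $L$ can now be as large as $(\log R)/D_0$ rather than $O(\log D_0)$. Everything else --- the collapse of the prefactors and of the $\log\log R$ once $r_j=1$, the cancellation $\tfrac{\varphi(e_l)}{e_l}\cdot\tfrac{g^{**}(e_l)}{\varphi(e_l)}=\tfrac{g^{**}(e_l)}{e_l}$, and the routine error accounting --- should go through without difficulty.
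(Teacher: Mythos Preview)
Your proposal is correct and follows essentially the same route as the paper: start from Proposition~\ref{prop:newvecoldvec} with $p_1=1$ and $r_j=1$ for $j\in J$, then evaluate the remaining sum(s) over $e_m$ (and $e_l$) by the method behind Lemma~\ref{key}, with the extra coprimality condition $(e,\prod_{i\in I}r_i)=1$ producing the factor $\prod_{i\in I}\varphi(r_i)/r_i$ via the singular series. Your explicit tracking of the sieve constant $L\ll(\log R)/D_0$ and of the cancellation $\tfrac{\varphi(e_l)}{e_l}\cdot\tfrac{g^{**}(e_l)}{\varphi(e_l)}=\tfrac{g^{**}(e_l)}{e_l}$ is exactly what the paper encodes in its choices of $\gamma_1,\gamma_2,\gamma_3$; the concern you flag in your final paragraph is genuine, but your own bound $c_\gamma L(\log R)^{-1/2}\ll B/D_0$ already resolves it.
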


\begin{proof}
First suppose $J=\{m\}.$ From Proposition~\ref{prop:newvecoldvec} we have
\begin{equation}\label{eq:rand56}
y_{r_1,\ldots,r_k}^{(J)}\big|_{r_j=1\,\,\forall j\in J} = \sum_{\substack{e_m}}\frac{g^{**}(e_m)}{\varphi(e_m)}F\bigg(\frac{\log{r_1}}{\log{R}},\ldots,\frac{\log{e_m}}{\log{R}},\ldots\frac{\log{r_k}}{\log{R}}\bigg)+O\bigg(\frac{F_{\text{max}}B}{D_0}\bigg)
\end{equation}
Consider the function
\begin{equation*}\label{eq:rand14}
\gamma_1(p)=
\begin{cases}
\frac{p}{1+\frac{\varphi(p)}{g^{**}(p)}}\,\,&\text{if $p\nmid W\prod_{i\in I}r_i$ and $p\equiv 3\,\,(\text{mod}\,\,4),$} \\
0\,\,&\text{otherwise.}
\end{cases}
\end{equation*}
With this choice of $\gamma_1(p)$ we have
\begin{equation*}
\frac{\gamma_1(p)}{p-\gamma_1(p)} = \frac{g^{**}(p)}{\varphi(p)}
\end{equation*}
and one can easily check $\gamma_1(p) = 1+O(1/p).$ By an argument identical to the proof of Lemma~\ref{key}, we can evaluate the sum in (\ref{eq:rand56}) as
\begin{align*} 
y_{r_1,\ldots,r_k}^{(J)}\big|_{r_j=1\,\,\forall j\in J} &= B\bigg(\prod_{i\in I}\frac{\varphi(r_i)}{r_i}\bigg)\int_0^1 F\bigg(\frac{\log{r_1}}{\log{R}},\ldots,t_m,\ldots,\frac{\log{r_k}}{\log{R}}\bigg)\frac{\mathrm{d}t_m}{\sqrt{t_m}}+O\bigg(\frac{F_{\text{max}}B}{D_0}\bigg),
\end{align*}
which proves (i). If $J=\{m,l\}$ then, again using Proposition~\ref{prop:newvecoldvec}, we find
\begin{align} \nonumber
y_{r_1,\ldots,r_k}^{(J)}\big|_{r_j=1\,\,\forall j\in J} &= \sum_{\substack{e_{m},e_{l}}}\bigg(\prod_{j=m,l}\frac{g^{**}(e_j)}{\varphi(e_j)}\bigg)F\bigg(\frac{\log{r_1}}{\log{R}},\ldots,\frac{\log{e_{m}}}{\log{R}},\ldots,\frac{\log{e_{l}}}{\log{R}},\ldots,\frac{\log{r_{k}}}{\log{R}}\bigg)\\ \label{eq:rand40}
&+O\bigg(\frac{F_{\text{max}}B^2}{D_0}\bigg).
\end{align}
Take the sum over $e_l$ first. Consider the function 
\begin{equation*}\label{eq:rand15}
\gamma_2(p)=
\begin{cases}
\frac{p}{1+\frac{\varphi(p)}{g^{**}(p)}}\,\,&\text{if $p\nmid W\prod_{i\in I}r_i e_m$ and $p\equiv 3\,\,(\text{mod}\,\,4),$} \\
0\,\,&\text{otherwise.}
\end{cases}
\end{equation*}
Reasoning as above, the sum on the right hand side of (\ref{eq:rand40}) becomes
\begin{align*}
 B\bigg(\prod_{i\in I}\frac{\varphi(r_i)}{r_i}\bigg)&\sum_{e_{m}}\frac{g^{**}(e_{m})}{e_{m}}\int_0^1 F\bigg(\frac{\log{r_1}}{\log{R}},\ldots,\frac{\log{e_{m}}}{\log{R}},\ldots,t_{l},\ldots,\frac{\log{r_k}}{\log{R}}\bigg)\frac{\mathrm{d}t_{l}}{\sqrt{t_{l}}}\\
 &+O\bigg(\frac{F_{\text{max}}B^2}{D_0}\bigg).
\end{align*}
We can evaluate this sum in much the same way, this time using the function 
\begin{equation*}
\gamma_3(p)=
\begin{cases}
\frac{p}{1+\frac{p}{g^{**}(p)}}\,\,&\text{if $p\nmid W\prod_{i\in I}r_i$ and $p\equiv 3\,\,(\text{mod}\,\,4),$} \\
0\,\,&\text{otherwise,}
\end{cases}
\end{equation*}
to get the stated result.
\end{proof}

If we define the (identity) operator 
\begin{equation*}
I_{r_1,\ldots,r_k}(F)=F\bigg(\frac{\log{r_1}}{\log{R}},\ldots,\frac{\log{r_k}}{\log{R}}\bigg),
\end{equation*}
then the results of Corollary~\ref{cor:2} can be concisely written as 
\begin{equation*}
y_{r_1,\ldots,r_k}^{(J)}\bigg|_{r_j=1\,\,\forall j\in J} = B^{|J|}\bigg(\prod_{i\in I}\frac{\varphi(r_i)}{r_i}\bigg)^{|J|}I_{r_1,\ldots,r_k;J}(F)+ O\bigg(\frac{F_{\text{max}}B^{|J|}}{D_0}\bigg)
\end{equation*}
for $|J|\in \{0,1,2\}.$ Here we have used $I_{r_1,\ldots,r_k;J}(F)$ to denote $I_{r_1,\ldots,r_k;j:j\in J}(F).$ We are now in a position to prove the remaining part of Lemma~\ref{lemma:generalsieve}.

\begin{proof}[Proof of Lemma~\ref{lemma:generalsieve} part (iii)] We can write the operators in the statement of Lemma~\ref{lemma:generalsieve} as follows:
\begin{align*} 
L(F)&= \int_0^1\ldots \int_0^1 \bigg[I(F)\bigg]^2  \prod_{\substack{i=1}}^{k}\frac{\mathrm{d}x_i}{\sqrt{x_i}}, \\
L_{m}(F)&= \int_0^1\ldots \int_0^1 \bigg[I_{m}(F)\bigg]^2  \prod_{\substack{i=1 \\ i\neq m}}^{k}\frac{\mathrm{d}x_i}{\sqrt{x_i}}, \\ 
L_{m,l}(F)&= \int_0^1\ldots \int_0^1 \bigg[I_{m,l}(F)\bigg]^2  \prod_{\substack{i=1 \\ i\neq m,l}}^{k}\frac{\mathrm{d}x_i}{\sqrt{x_i}},
\end{align*}
where we have defined
\begin{align*}
I(F) &= F(x_1,\ldots,x_k), \\
I_m(F) &= \int_0^1F(x_1,\ldots,x_k)\frac{\mathrm{d}x_m}{\sqrt{x_m}}, \\
I_{m,l}(F) &= \int_0^1 \bigg(\int_0^1F(x_1,\ldots,x_k)\frac{\mathrm{d}x_m}{\sqrt{x_m}}\bigg)\frac{\mathrm{d}x_l}{\sqrt{x_l}}.
\end{align*}
From Proposition \ref{prop:2} we see that
\begin{equation}\label{eq:rand53}
S_J = \sum_{\substack{u_1,\ldots,u_k \\ u_j=1\,\,\forall j\in J}}\frac{(y_{u_1,\ldots,u_k}^{(J)})^2}{\prod_{i\in I}f^*(u_i)}+O\bigg(\frac{(y_{\text{max}}^{(J)})^2B^{|I|}}{D_0}\bigg).
\end{equation}
From Corollary~\ref{cor:2}, for $|J|\in\{0,1,2\},$ we have 
\begin{equation*}
(y_{r_1,\ldots,r_k}^{(J)})^2\bigg|_{r_j=1\,\,\forall j\in J} = B^{2|J|}\bigg(\prod_{i\in I}\frac{\varphi(r_i)}{r_i}\bigg)^{2|J|}\bigg[I_{r_1,\ldots,r_k;J}(F)\bigg]^2+ O\bigg(\frac{F_{\text{max}}^2B^{2|J|}}{D_0}\bigg).
\end{equation*}
Substituting this into (\ref{eq:rand53}), and using (\ref{eq:ymaxJ}), yields
\begin{align*} \nonumber
S_J&= B^{2|J|} \sum_{\substack{u_1,\ldots,u_k \\ u_j=1\,\,\forall j\in J \\(u_i,u_j)=1\,\,\forall i\neq j \\ (u_i,W)=1\,\,\forall i \\ p|u_i\Rightarrow p\equiv 3\,\,(\text{mod}\,\,4)}}\prod_{i\in I}\frac{\mu^2(u_i)\varphi(u_i)^{2|J|}}{f^*(u_i)u_i^{2|J|}}\bigg[I_{u_1,\ldots,u_k;J}(F)\bigg]^2 \\
&+O\bigg(\frac{F_{\text{max}}^2B^{2|J|}}{D_0}\sum_{\substack{u_1,\ldots,u_k \\ u_j=1\,\,\forall j\in J \\ (u_i,W)=1\,\,\forall i \\ p|u_i\Rightarrow p\equiv 3\,\,(\text{mod}\,\,4)}}\prod_{i\in I}\frac{\mu^2(u_i)}{f^*(u_i)}\bigg)+ O\bigg(\frac{F_{\text{max}}^2B^{k+|J|}(\log\log{R})^2}{D_0}\bigg).
\end{align*}
The first error contributes
\begin{equation*}
\ll \frac{F_{\text{max}}^2B^{2|J|}}{D_0}\bigg(\sum_{\substack{u\leq R \\ (u,W)=1 \\ p|u\Rightarrow p\equiv 3\,\,(\text{mod}\,\,4)}}\frac{\mu^2(u_i)}{f^*(u_i)}\bigg)^{|I|} \ll \frac{F_{\text{max}}^2B^{k+|J|}}{D_0}.
\end{equation*}
For the main term, if $(u_i,u_j)\neq 1$ then they must be divisible by a prime $q>D_0$ with $q\equiv 3\,\,(\text{mod}\,\,4)$. In this case we get a contribution 
\begin{align*} \nonumber
&\ll F_{\text{max}}^2B^{2|J|} \sum_{\substack{q>D_0 \\ q\equiv 3\,\,(\text{mod}\,\,4)}}\sum_{\substack{u_1,\ldots,u_k \\ u_j=1\,\,\forall j\in J \\ (u_i,W)=1\,\,\forall i \\ p|u_i \Rightarrow p\equiv 3\,\,(\text{mod}\,\,4) \\ q|u_i,u_j}}\prod_{i\in I}\frac{\mu^2(u_i)\varphi(u_i)^{2|J|}}{f^*(u_i)u_i^{2|J|}} \\ 
&\ll F_{\text{max}}^2B^{2|J|} \bigg(\sum_{\substack{u\leq R \\ (u,W)=1 \\ p|u\Rightarrow p\equiv 3\,\,(\text{mod}\,\,4)}}\frac{\mu^2(u)\varphi(u)^{2|J|}}{f^*(u)u^{2|J|}}\bigg)^{|I|} \sum_{q>D_0}\frac{\varphi(q)^{4|J|}}{f^*(q)^2q^{4|J|}} \ll \frac{F_{\text{max}}^2B^{k+|J|}}{D_0}.
\end{align*}
Thus this constraint can be removed at the cost of a negligible error and we are left with
\begin{equation*}
S_J=B^{2|J|} \sum_{\substack{u_1,\ldots,u_k \\ u_j=1\,\,\forall j\in J \\ (u_i,W)=1\,\,\forall i \\ p|u_i\Rightarrow p\equiv 3\,\,(\text{mod}\,\,4)}}\prod_{i\in I}\frac{\mu^2(u_i)\varphi(u_i)^{2|J|}}{f^*(u_i)u_i^{2|J|}}\bigg[I_{u_1,\ldots,u_k;J}(F)\bigg]^2+O\bigg(\frac{F_{\text{max}}^2B^{k+|J|}(\log\log{R})^2}{D_0}\bigg).
\end{equation*}
Now, since
$$\frac{\varphi(p)^{2|J|}}{f^*(p)p^{2|J|}} = \frac{1}{p}+O\bigg(\frac{1}{p^2}\bigg),$$
we can evaluate this multidimensional sum by applying Lemma~\ref{key} $|I|=k-|J|$ times. We obtain 
\begin{equation*}
S_J=B^{k+|J|}L_J(F)+O\bigg(\frac{F_{\text{max}}^2B^{k+|J|}(\log\log{R})^2}{D_0}\bigg).
\end{equation*}
This completes the proof of Lemma \ref{lemma:generalsieve}.
\end{proof}
%


\appendix
\appendixpage

\section{Estimates for $r(n)$ and $r(n)r(n+h)$.}
We sketch proofs for Lemmas \ref{lemma:1}, \ref{lemma:2} and \ref{lemma:3}. The first follows immediately from the following two results, due to Tolev~\hbox{\cite[Theorem]{Tolev}} and Plaksin  (cf. discussion just before~\hbox{\cite[Lemma 4]{Plak}}) respectively.

\begin{lemma}\label{theo:Tol}
We have 
\begin{equation}\label{eq:app1}
\sum_{\substack{n\leq N\\ n\equiv \Delta\,\,(\text{mod}\,\,Q)}}r(n) = \frac{\eta(Q,\Delta)}{Q^2}\pi N+O((Q^{\frac{1}{2}}+N^{\frac{1}{3}})(\Delta,Q)^{\frac{1}{2}}\tau^4(Q)\log^4{N}),
\end{equation}
where
\begin{equation*}
\eta(Q,\Delta) = \#\{1\leq \alpha,\beta \leq Q: \alpha^2+\beta^2 \equiv \Delta\,\,(\text{mod}\,\,Q)\}.
\end{equation*}
\end{lemma}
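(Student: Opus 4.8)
The plan is to recast the sum as a two-dimensional lattice-point count and attack it with Poisson summation together with the van der Corput method. Writing $r(n)$ as the number of $(x,y)\in\mathbb{Z}^2$ with $x^2+y^2=n$, the left side of~(\ref{eq:app1}) becomes
\[
\mathcal{N}(\Delta,Q,N)=\#\{(x,y)\in\mathbb{Z}^2:\ 0<x^2+y^2\le N,\ x^2+y^2\equiv\Delta\ (\mathrm{mod}\ Q)\}.
\]
I would detect the congruence with additive characters $e_Q(t):=e^{2\pi i t/Q}$ (equivalently, split into residue classes $(\alpha,\beta)$ mod $Q$ and apply Poisson summation to the count of points of the shifted lattice $Q\mathbb{Z}^2+(\alpha,\beta)$ in the disc of radius $\sqrt N$). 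The zero-frequency term gives area over covolume, $\pi N/Q^2$, and summing over the $\eta(Q,\Delta)$ pairs $(\alpha,\beta)$ with $\alpha^2+\beta^2\equiv\Delta$ produces exactly the main term $\eta(Q,\Delta)\pi N/Q^2$ (using the standard evaluation $\eta(Q,\Delta)=Q^{-1}\sum_{a\bmod Q}e_Q(-a\Delta)\,\mathfrak g(a;Q)^2$ with $\mathfrak g$ the quadratic Gauss sum).

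The nonzero frequencies, once the character sums are recombined over the conic, contribute an error of the shape $\sum_{(h,k)\ne(0,0)}\widehat{\mathbf{1}}_{D}(h,k)\,K_\Delta(h,k;Q)$, where $\widehat{\mathbf{1}}_{D}$ are the Fourier coefficients (evaluated on $\tfrac1Q\mathbb{Z}^2$) of the indicator of the disc of radius $\sqrt N$, and $K_\Delta(h,k;Q)=\sum_{\alpha^2+\beta^2\equiv\Delta\,(Q)}e_Q(h\alpha+k\beta)$. Two inputs then close the argument. First, $K_\Delta(h,k;Q)$ is a Gauss/Kloosterman-type sum: completing the square in $\alpha,\beta$ turns it into a Kloosterman sum times a Gauss sum, which by Weil's bound (plus divisor bookkeeping over the non-primitive pieces where $(a,Q)>1$) is $\ll (\Delta,Q)^{1/2}Q^{1/2}\tau(Q)^{O(1)}$ — this is the origin of the factors $(\Delta,Q)^{1/2}$ and $\tau^4(Q)$, and of the $Q^{1/2}$ term in the error. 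Second, $\widehat{\mathbf{1}}_{D}(h,k)$ decays like $|(h,k)|^{-3/2}$ from the asymptotics of the Bessel function $J_1$; equivalently, after separating variables one estimates the inner one-dimensional sums $\sum_{|x|\le\sqrt N}e_Q(ax^2)\sqrt{N-x^2}$ by van der Corput's $B$-process (stationary phase on the curved amplitude $\sqrt{N-x^2}$), and this is precisely what delivers the $N^{1/3}$ in place of the trivial $\sqrt N$. Since Poisson summation against an indicator is not absolutely convergent, I would first sandwich $\mathbf{1}_{D}$ between smooth majorant and minorant functions (à la Beurling–Selberg) at a suitably chosen scale, truncate the $(h,k)$-sum at the corresponding height, and optimise; the $\log^4 N$ is the usual loss from completing sums, Pólya–Vinogradov, and this truncation.

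The main obstacle lies entirely in the error term and is twofold: first, obtaining the exponent $1/3$ in $N$ rather than $1/2$ genuinely requires the van der Corput second-derivative estimate on the twisted curved sum $\sum_{x}e_Q(ax^2)\sqrt{N-x^2}$, carried out with full uniformity in the modulus $Q$, the frequency $a$, and $(a,Q)$; and second, the careful bookkeeping tracking how the $(a,Q)$- (equivalently $(h,k,Q)$-) dependence of the Gauss sums interacts with $\Delta$ so that one extracts $(\Delta,Q)^{1/2}$ rather than $Q^{1/2}$, while keeping the accumulated divisor weights within $\tau^4(Q)$. The rest — the reduction to a lattice count, Poisson summation, the area main term, and the smoothing step — is routine.
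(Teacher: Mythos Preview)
The paper does not prove this lemma at all: it is simply quoted as \cite[Theorem]{Tolev} (Tolev, \emph{On the remainder term in the circle problem in an arithmetic progression}), with no argument given. Your sketch is therefore not competing with anything in the paper; you are outlining a proof of a result the author is content to import wholesale.

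That said, your outline is the right shape for how such a theorem is actually established. Rewriting the sum as a lattice-point count in a disc with a congruence constraint, splitting into residue classes mod~$Q$, and applying Poisson summation is exactly the standard opening, and the zero frequency does yield the main term $\eta(Q,\Delta)\pi N/Q^2$ as you say. The error analysis via Gauss-type sums and a van der Corput estimate to beat the trivial $N^{1/2}$ down to $N^{1/3}$ is also the correct mechanism. One point to tighten if you ever write this out in full: the sum $K_\Delta(h,k;Q)$ is not literally ``a Kloosterman sum times a Gauss sum'' --- after detecting the congruence with an extra additive character $e_Q(-a\Delta)$ and completing the square in $\alpha,\beta$, you get a product of two one-variable Gauss sums whose evaluation introduces a phase $e_Q(-\overline{4a}(h^2+k^2))$, and it is the remaining sum over $a$ (weighted by these Gauss sums and stratified by $(a,Q)$) that has Kloosterman/Sali\'e structure. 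Tracking the $(a,Q)$ stratification against $\Delta$ is precisely where the factor $(\Delta,Q)^{1/2}$ and the divisor powers $\tau^4(Q)$ emerge, as you correctly flag as the delicate part.
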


\begin{lemma}\label{theo:Plak}
We have
\begin{equation}\label{eq:app2}
\sum_{\substack{n\leq N\\ n\equiv \Delta\,\,(\text{mod}\,\,Q)}}r(n) = \frac{A(Q,\Delta)}{Q}\pi N+O(P(N;Q,\Delta)),
\end{equation}
where
\begin{equation}\label{eq:app3}
A(Q,\Delta) = \bigg[1+\sum_{\substack{k \\ 4|2^k|(Q,4\Delta)}}\chi\bigg(\frac{4\Delta}{2^k}\bigg)\bigg]\sum_{r|Q}\frac{\chi(r)}{r}c_r(\Delta),
\end{equation}
and $P(N;Q,\Delta)$ satisfies
\begin{equation*}
\int_1^N P^2(y;Q,\Delta)\mathrm{d}y \ll_{\epsilon} (QN)^{\epsilon}(N^{\frac{3}{2}}+QN).
\end{equation*}
Here $c_r(\Delta)=\sum_{(a,r)=1}e^{\frac{2\pi i a \Delta}{r}}$ denotes the Ramanujan sum.
\end{lemma}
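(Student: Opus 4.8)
This is Plaksin's theorem, and the natural route follows the analysis of the Gauss circle problem in arithmetic progressions; throughout write $e(\theta)=e^{2\pi i\theta}$. The first step is to detect the congruence with additive characters, turning the sum into a Gauss-sum-weighted lattice point count. Since $r(n)$ counts representations $n=x^2+y^2$,
\begin{equation*}
\sum_{\substack{n\le N\\ n\equiv\Delta\,(\mathrm{mod}\,Q)}}r(n)=\frac{1}{Q}\sum_{a\,(\mathrm{mod}\,Q)}e\!\left(-\frac{a\Delta}{Q}\right)\sum_{\substack{x,y\in\mathbb{Z}\\ x^2+y^2\le N}}e\!\left(\frac{a(x^2+y^2)}{Q}\right).
\end{equation*}
For each $a$ put $q=Q/(a,Q)$ and $a'=a/(a,Q)$, so $a/Q=a'/q$ with $(a',q)=1$. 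Splitting the $x$- and $y$-sums into residue classes modulo $q$ and invoking the evaluation of the quadratic Gauss sum $\sum_{x\,(\mathrm{mod}\,q)}e(a'x^2/q)$, the inner double sum becomes $\dfrac{G(a',q)^2}{q^2}\,\pi N+E_a(N)$, where $G(a',q)$ is the Gauss sum and $E_a(y)$ is the error in the associated \emph{twisted} circle count (lattice points in a disc of area $\pi y$, weighted by a fixed additive character on each residue class modulo $q$).

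Next I would collect main terms. The aggregate main term is $\dfrac{\pi N}{Q}\sum_{a\,(\mathrm{mod}\,Q)}e(-a\Delta/Q)\,G(a',q)^2/q^2$; treating the odd and $2$-adic parts of $Q$ separately — the latter because $\chi=\chi_4$ has conductor $4$ — one evaluates the quadratic Gauss sums in closed form and re-expands the resulting character sums in terms of the Ramanujan sums $c_r(\Delta)$. This recovers exactly the factor $A(Q,\Delta)/Q$ with $A(Q,\Delta)$ as in~(\ref{eq:app3}); the bracketed $2$-adic factor $[1+\sum_{k}\chi(4\Delta/2^k)]$ is precisely the contribution of those $a$ whose denominator $q$ is a power of $2$, while $\sum_{r\mid Q}\chi(r)c_r(\Delta)/r$ packages the odd-modulus Gauss sums.

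For the error term one sets $P(N;Q,\Delta)=\frac{1}{Q}\sum_{a\,(\mathrm{mod}\,Q)}e(-a\Delta/Q)E_a(N)$ and bounds it in mean square. By Cauchy--Schwarz,
\begin{equation*}
\int_1^N P(y;Q,\Delta)^2\,\mathrm{d}y\ \ll\ \frac{1}{Q}\sum_{a\,(\mathrm{mod}\,Q)}\int_1^N E_a(y)^2\,\mathrm{d}y .
\end{equation*}
The heart of the matter is the uniform mean-square bound $\int_1^N E_a(y)^2\,\mathrm{d}y\ll_\epsilon (QN)^\epsilon\bigl(N^{3/2}+qN\bigr)$ with $q=Q/(a,Q)$: one applies a Voronoi-type summation formula for $r(n)$ twisted by $e(a\cdot/Q)$, whose dual side is a series of Bessel functions $J_1$ with coefficients built from Gauss/Kloosterman sums modulo $q$, and then estimates this series in mean square over $y\in[1,N]$ — the $N^{3/2}$ being the untwisted ``diagonal'' contribution (the classical mean square for the Gauss circle problem, cf.\ Cram\'er) and the $qN$ coming from the lower-order terms in which the Bessel argument stays bounded. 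Granting this, summing over $a$ and using $\sum_{a\,(\mathrm{mod}\,Q)}(a,Q)^{-1}=\sum_{d\mid Q}\varphi(Q/d)/d\ll Q$ gives $\int_1^N P(y;Q,\Delta)^2\,\mathrm{d}y\ll_\epsilon (QN)^\epsilon(N^{3/2}+QN)$, as claimed.

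The main obstacle is exactly this twisted mean-square estimate: setting up the Voronoi summation formula with explicit dependence on the modulus $q$ and controlling the resulting Bessel series uniformly in $a$. Everything else — the additive-character opening, the quadratic Gauss sum evaluations, the identification of $A(Q,\Delta)$, and the combinatorial bookkeeping over the residues $a\,(\mathrm{mod}\,Q)$ — is routine once that bound is in hand.
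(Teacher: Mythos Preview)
The paper does not give its own proof of this lemma: it is simply quoted as a known result of Plaksin (see the sentence introducing Lemmas~\ref{theo:Tol} and~\ref{theo:Plak}, which attributes the latter to the discussion just before \cite[Lemma 4]{Plak}). So there is nothing in the paper to compare your argument against beyond the citation itself.

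That said, your sketch is a faithful outline of the standard approach and of what Plaksin in fact does: open the congruence with additive characters, reduce to twisted circle counts, evaluate the main term via quadratic Gauss sums (with the $2$-adic bookkeeping producing the bracketed factor in $A(Q,\Delta)$), and control the error in mean square using a Voronoi-type expansion for the twisted circle problem. Your identification of the core difficulty --- the uniform mean-square bound for the twisted error $E_a(y)$ --- is correct; this is exactly the step that carries the weight in \cite{Plak}. For the purposes of this paper nothing more is needed than the citation, but if you wanted to make the sketch self-contained you would need to actually carry out that Voronoi/Bessel analysis with explicit $q$-dependence rather than just assert the bound $\int_1^N E_a(y)^2\,\mathrm{d}y\ll_\epsilon (QN)^\epsilon(N^{3/2}+qN)$.
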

\begin{proof}[Proof of Lemma~\ref{lemma:1}]
Equating (\ref{eq:app1}) and (\ref{eq:app2}), dividing through by $\pi N,$ and letting $N\rightarrow \infty$ we see that
\begin{equation*}
\frac{\eta(Q,\Delta)}{Q^2}=\frac{A(Q,\Delta)}{Q}
\end{equation*}
holds for any fixed $Q,\Delta.$ Thus we may write 
\begin{equation}\label{eq:tolplak}
\sum_{\substack{n\leq N\\ n\equiv \Delta\,\,(\text{mod}\,\,Q)}}r(n) = \frac{A(Q,\Delta)}{Q}+O((Q^{\frac{1}{2}}+N^{\frac{1}{3}})(\Delta,Q)^{\frac{1}{2}}\tau^4(Q)\log^4{N}).
\end{equation}
Let $Q=4qd$ and $\Delta\,\,(\text{mod}\,\,Q)$ be the solution to the congruence system $\Delta \equiv a\,\,(\text{mod}\,\,q),$ $\Delta \equiv 1\,\,(\text{mod}\,\,4),$ and $\Delta \equiv 0\,\,(\text{mod}\,\,d),$
where $(a,q)=(d,q)=1$ and $q,d$ are square-free and odd. Then by multiplicativity of the Ramanujan sum and our assumptions about $\Delta,q$ and $d$ from (\ref{eq:app3}) we obtain
\begin{align}
A(Q,\Delta) = 2 \sum_{r|q}\frac{\chi(r)\mu(r)}{r}\sum_{r|d}\frac{\chi(r)\varphi(r)}{r} = 2g_1(q)g_2(d),
\end{align}
and so we obtain the result of Lemma \ref{lemma:1} on dividing through by $Q=4qd$. 
\end{proof}
For Lemma \ref{lemma:2} we have the following result.

\begin{lemma}\label{theo:plak}
Suppose that $(a,q)=(a+h,q)=(d_1,q)=(d_2,q)=(d_1,d_2)=1,4|h$ where $d_1,d_2,q$ are square-free and odd, of size $\ll N^{O(1)}$. Then for $0<h<N^{\frac{3}{4}}$ we have 
\begin{equation}\label{eq:app4}
\sum_{\substack{n\leq N\\ n\equiv a\,\,(\text{mod}\,\,q) \\ n\equiv 1\,\,(\text{mod}\,\,4) \\ d_1|n, d_2|n+h}}r(n)r(n+h) = \frac{g_1(q)^2}{q}\Gamma(h,d_1,d_2,q)\pi^2N+R_2(N,d_1,d_2,q),
\end{equation}
where
$$R_2(N,d_1,d_2,q)\ll_{\epsilon}q^{\frac{1}{2}}d_1d_2N^{\frac{3}{4}+\epsilon}+d_1^{\frac{1}{2}}d_2^{\frac{1}{2}}N^{\frac{5}{6}+\epsilon},$$
where
$$\Gamma(h,d_1,d_2,q) = \frac{g_2(d_1)g_2(d_2)}{d_1d_2}\sum_{\substack{(r,2q)=1}}\frac{c_r(h)}{r^2}\frac{(d_1,r)(d_2,r)}{\Psi(d_1,r)\Psi(d_2,r)}\chi[(d_1^2,r)]\chi[(d_2^2,r)]
$$
and $g_1$ is the multiplicative function defined on primes by $g_1(p) =1- \frac{\chi(p)}{p}$ and $\Psi(d_1,r)=g_2((d_1,r/(r,d_1))).$
\end{lemma}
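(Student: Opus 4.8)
The plan is to adapt Plaksin's analysis of the correlation sum $\sum_{n\leq N}r(n)r(n+h)$ from~\cite{Plak}, re-running his argument with the congruence conditions and the moduli $q,d_1,d_2$ kept explicit throughout. First I would open the two divisor sums using $r(m)=4\sum_{e|m}\chi(e)$ (only odd $e$ contribute), which rewrites the left-hand side of~(\ref{eq:app4}) as
\begin{equation*}
16\sum_{e_1,e_2\geq 1}\chi(e_1)\chi(e_2)\,\#\Big\{n\leq N:\ n\equiv a\,\,(\text{mod}\,\,q),\ n\equiv 1\,\,(\text{mod}\,\,4),\ [d_1,e_1]|n,\ [d_2,e_2]|n+h\Big\}.
\end{equation*}
The four congruence conditions on $n$ are simultaneously solvable precisely when $\big([d_1,e_1],[d_2,e_2]\big)|h$ and the residues are compatible on the common factors with $4q$; in that case they collapse to a single class modulo $L=L(e_1,e_2)$, the least common multiple of $4q$, $[d_1,e_1]$ and $[d_2,e_2]$, and the inner cardinality equals $N/L+O(1)$.

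Next I would separate the main term from the tail by applying the Dirichlet hyperbola method to each $e_i$: for odd $m$, complete multiplicativity of $\chi$ gives $\sum_{e|m}\chi(e)=\sum_{e|m,\,e\leq Y}\chi(e)+\sum_{f|m,\,f<m/Y}\chi(m)\chi(f)$. Substituting both splits and retaining only the doubly truncated part produces the candidate main term $16N\sum'_{e_1,e_2}\chi(e_1)\chi(e_2)/L(e_1,e_2)$ over the admissible pairs with $e_i\leq Y_i$, while the three remaining pieces are cross-terms in which a sum of $\chi$ runs over a long arithmetic progression; I would bound these by P\'olya--Vinogradov together with the trivial bound for the accompanying point counts. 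Optimising $Y_1$ and $Y_2$ to balance these contributions, and using $0<h<N^{3/4}$ to absorb the loss coming from the solvability conditions, should yield exactly the error term $R_2$ claimed in Lemma~\ref{theo:plak} (with the $N^{5/6+\epsilon}$ piece arising from the range where one $e_i$ is large and the other of size about $\sqrt{N}$, and the $N^{3/4+\epsilon}$ piece from the balanced range).

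To finish, I would evaluate the main term as an Euler product, using that the summand of $\sum'_{e_1,e_2}\chi(e_1)\chi(e_2)/L(e_1,e_2)$ is multiplicative in the pair $(e_1,e_2)$ and that the truncated sums may be completed to infinity at the cost of an error of the same order as above (the series converges conditionally, by partial summation and P\'olya--Vinogradov). The local factors at the primes dividing $q$ and at $p=2$ assemble into $\pi^2g_1(q)^2/q$, the factor $\pi^2$ coming from $L(1,\chi)=\pi/4$ applied twice, just as in the classical identity $\sum_{n\leq N}r(n)=\pi N+O(\sqrt{N})$; the primes dividing $h$ contribute the Ramanujan sums $c_r(h)$; and the primes dividing $d_1$ or $d_2$ contribute the factors $(d_i,r)$, $\Psi(d_i,r)^{-1}$ and the twists $\chi[(d_i^2,r)]$, together with the prefactor $g_2(d_1)g_2(d_2)/(d_1d_2)$. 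Collecting the residual sum over $(r,2q)=1$ then gives precisely $\frac{g_1(q)^2}{q}\Gamma(h,d_1,d_2,q)\pi^2N$, as in~(\ref{eq:app4}).

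The main obstacle is the error analysis of the second step. A crude treatment of the divisor sums only gives the bound $O(N^{1+\epsilon})$; extracting a genuine power-saving \emph{uniformly} in $q,d_1,d_2$ forces one to track the exact polynomial dependence on these moduli through every estimate --- the hyperbola split, the P\'olya--Vinogradov bounds, the accumulated $O(1)$ rounding errors from the point counts, and the compatibility conditions defining the admissible pairs. This is exactly the point at which Plaksin's argument must be reproduced with the moduli made explicit rather than quoted, and it is why the mean-square form of his error term is not directly usable here. I do not expect any arithmetic input beyond P\'olya--Vinogradov to be required; the difficulty is one of careful bookkeeping and optimisation of the truncation heights.
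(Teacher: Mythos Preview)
Your plan differs from the paper's (and Plaksin's) in one essential way, and that difference is a genuine gap.

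The paper opens \emph{only one} factor $r(n)$ via the Dirichlet hyperbola identity (with cut-off $\sqrt N$), leaving the second factor $r(n+h)$ intact. The inner sum then becomes $\sum_{n\in\text{AP}}r(n+h)$ over a progression of modulus $Q=4q[m,d_1,d_2]$, which is evaluated by Tolev's arithmetic-progression formula (Lemma~\ref{theo:Tol}/\ref{theo:Plak}) with pointwise error $\big(Q^{1/2}+N^{1/3}\big)(\Delta,Q)^{1/2}N^\epsilon$. Summing over $m\le\sqrt N$, the $Q^{1/2}$ contribution gives the $q^{1/2}d_1d_2N^{3/4+\epsilon}$ term and the $N^{1/3}$ contribution gives the $d_1^{1/2}d_2^{1/2}N^{5/6+\epsilon}$ term. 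The exponent $5/6=1/2+1/3$ is exactly (outer hyperbola range)$+$(circle-problem exponent).

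Your symmetric scheme --- open both $r$'s, use the hyperbola on each, and treat the inner count as $N/L+O(1)$ --- cannot produce a power saving with only P\'olya--Vinogradov. The obstruction is the accumulated $O(1)$ rounding errors. With truncations $e_i\le Y_i$ and complementary ranges $f_i\lesssim N/Y_i$, the rounding errors in the four pieces are of sizes $Y_1Y_2$, $Y_1\cdot N/Y_2$, $N/Y_1\cdot Y_2$, and $(N/Y_1)(N/Y_2)$; the first and last cannot both be $o(N)$. P\'olya--Vinogradov (here just the boundedness of partial sums of $\chi_4$) gives cancellation in the smooth main parts $N/L$, but the fractional-part errors carry no correlation with $\chi(e_i)$ and do not cancel. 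To beat this you would need exponential-sum input of van der Corput type for sums of fractional parts --- which is precisely what is packaged into the $N^{1/3}$ in Tolev's theorem. So the sentence ``I do not expect any arithmetic input beyond P\'olya--Vinogradov to be required'' is where the argument breaks: the $N^{5/6+\epsilon}$ term in $R_2$ genuinely requires the circle-problem saving, and the cleanest way to import it is the asymmetric route of the paper --- hyperbola on one factor, Tolev on the other.
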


\begin{proof}[Proof of Lemma~\ref{lemma:2}]
Under the additional assumption that $p|h \Rightarrow p|2q,$ then we see for $r$ considered in the sum $c_r(h)=\mu(r)$. But now restricting to square-free $r$, since $d_1,d_2$ are square-free $\Psi(d_1,r)=\Psi(d_2,r)=1.$ Thus in this case
\begin{equation}
\Gamma(h,d_1,d_2,q) = \frac{g_2(d_1)g_2(d_2)}{d_1d_2}\sum_{\substack{(r,2q)=1}}\frac{\mu(r)(d_1,r)(d_2,r)}{r^2}\chi[(d_1^2,r)]\chi[(d_2^2,r)].
\end{equation}
This is the form stated in Lemma~\ref{lemma:2}.
\end{proof}

Now we briefly outline the proof of Lemma~\ref{theo:plak}. In~\cite[Lemma 4]{Plak} a similar sum to (\ref{eq:app4}) is considered, this time under the hypotheses $p|d_1,d_2\Rightarrow p\equiv 3\,\,(\text{mod}\,\,4),$ $q=1$ and the congruence $n\equiv 1\,\,(\text{mod}\,\,4)$ is omitted. The proof of Lemma \ref{theo:plak} is similar to the proof found there, with few minor changes. The key point is to note that, using the convolution identity $r=4(\chi*1)$ and complete multiplicativity of $\chi,$ for $n\equiv 1\,\,(\text{mod}\,\,4)$ we can write
\begin{align} \nonumber
\frac{r(n)}{4} &= \sum_{m|n}\chi(m) = \sum_{\substack{m|n \\ m\leq \sqrt{N}}}\chi(m)+\sum_{\substack{m|n \\ m>\sqrt{N}}}\chi(m) =\sum_{\substack{m|n \\ m\leq \sqrt{N}}}\chi(m)+\sum_{\substack{l|n \\ l <\frac{n}{\sqrt{N}}}}\chi(l) \\
&=2\sum_{\substack{m|n \\ m\leq \sqrt{N}}}\chi(m)-\sum_{\substack{m|n \\ \frac{n}{\sqrt{N}}\leq m \leq \sqrt{N}}}\chi(m). 
\end{align}
Using this we may expand out $r(n)$ in the sum (\ref{eq:app4}). We obtain (after swapping the order of summation)
\begin{align*}
\sum_{\substack{n\leq N\\ n\equiv a\,\,(\text{mod}\,\,q) \\ n\equiv 1\,\,(\text{mod}\,\,4) \\ d_1|n, d_2|n+h}}r(n)r(n+h)  = 4 \sum_{m\leq \sqrt{N}}\chi(m)\bigg[2\sum_{\substack{n\leq N\\ n\equiv a\,\,(\text{mod}\,\,q) \\ n\equiv 1\,\,(\text{mod}\,\,4) \\ d_1|n, d_2|n+h \\ m|n}}r(n+h)-\sum_{\substack{n\leq m\sqrt{N}\\ n\equiv a\,\,(\text{mod}\,\,q) \\ n\equiv 1\,\,(\text{mod}\,\,4) \\ d_1|n, d_2|n+h \\ m|n}}r(n+h)\bigg].
\end{align*}
These congruences have a solution if and only if $(m,q)=1$ and $(m,d_2)|h$. In this case we can use the Chinese Remainder Theorem and write
\begin{align}\nonumber
\sum_{\substack{n\leq N\\ n\equiv a\,\,(\text{mod}\,\,q) \\ n\equiv 1\,\,(\text{mod}\,\,4) \\ d_1|n, d_2|n+h}}r(n)r(n+h)  = 4\sum_{\substack{m\leq \sqrt{N}\\ (m,q)=1 \\ (m,d_2)|h}}\chi(m) &\bigg[2\sum_{\substack{n\leq N \\ n\equiv \Delta\,\,(\text{mod}\,\,Q)}}r(n)-\sum_{\substack{n\leq m\sqrt{N} \\ n\equiv \Delta\,\,(\text{mod}\,\,Q)}}r(n)\bigg] \\ \label{eq:app5}
&+O_{\epsilon}((N^{\frac{1}{2}}+h)N^{\epsilon}),
\end{align}
where $Q=4q[m,d_1,d_2]$ and $\Delta\,\,(\text{mod}\,\,Q)$ satisfies the congruence system $\Delta \equiv a+h\,\,(\text{mod}\,\,q), \Delta \equiv 1\,\,(\text{mod}\,\,4), \Delta \equiv h\,\,(\text{mod}\,\,m), \Delta \equiv h\,\,(\text{mod}\,\,d_1)$ and $\Delta \equiv 0\,\,(\text{mod}\,\,d_2)$. The error term arises by estimating the intervals of length $h$ left over: taking absolute values, using the divisor bound $r(n) \ll n^{\epsilon}$, and noting that in this regime $h\ll N^{3/4},$ we have to estimate sums of type 
$$N^{\epsilon} \sum_{m\leq \sqrt{N}} \sum_{\substack{X < n \leq X+h \\ n\equiv h\,\,(\text{mod}\,\,m)}}1.$$
Applying the standard estimate $h/m+O(1)$ to the inner sum and then carrying out the summation over $m$ yields the desired estimate (after redefining our choice of $\epsilon$).
%

Now the inner sums in (\ref{eq:app5}) consist of estimating $r(n)$ in arithmetic progressions. It proves convenient to proceed using formula (\ref{eq:tolplak}). We obtain
\begin{equation}\label{eq:app6}
4\pi\sum_{\substack{m\leq \sqrt{N} \\ (m,q)=1 \\ (m,d_2)|h}}\chi(m)\bigg[\frac{2N-m\sqrt{N}}{Q}\bigg]A(Q,\Delta)+R_2(N;d_1,d_2,q)+O_{\epsilon}((N^{\frac{1}{2}}+h)N^{\epsilon}),
\end{equation}
where $R_2$ is an error term. Using the fact $(\Delta,Q)\leq d_1d_2(m,h)$ and $Q\ll  mqd_1d_2$ one can show $R_2$ contributes
\begin{align*} 
R_2(N;d_1,d_2,q) &\ll q^{\frac{1}{2}}d_1d_2N^{\frac{3}{4}+\epsilon}+d_1^{\frac{1}{2}}d_2^{\frac{1}{2}}N^{\frac{5}{6}+\epsilon},
\end{align*}
and so this error term dominates. Now one can estimate the main term in (\ref{eq:app6}) using the same techniques found in~\hbox{\cite[Lemma 4]{Plak}}.
\\ \\
For Lemma \ref{lemma:3} we have the following result.

\begin{lemma}\label{theo:Perron}
Let $(a,q)=(d,q)=1$ where $d,q$ are square-free and odd, of size $\ll N^{O(1)}$. Then we have 
\begin{equation}
\sum_{\substack{n\leq x \\ n\equiv a\,\,(\text{mod}\,\,q) \\ n\equiv 1\,\,(\text{mod}\,\,4) \\ d|n}}r^2(n) = \frac{16H(1,d,4q)}{\varphi(4q)}\bigg[\log{x}+2\gamma-1+\frac{H'(1,d,4q)}{H(1,d,4q)}\bigg]x
+ O_{\epsilon}(qx^{\frac{3}{4}+\epsilon}),
\end{equation}
where
\begin{align*}
H(s,d,4q)&=\frac{L^2(s,\chi_4)G(s,4q)A(s,d)}{\zeta(2s)}, \\
G(s,4q)&= \prod_{p|4q}\bigg(1-\frac{1}{p^s}\bigg)\bigg(1+\frac{1}{p^s}\bigg)^{-1}\bigg(1-\frac{\chi_4(p)}{p^s}\bigg)^{2}, \\
A(s,d) &= \prod_{\substack{p|d \\ p\equiv 3\,\,(\text{mod}\,\,4)}}\frac{1}{p^{2s}}\prod_{\substack{p|d \\ p\equiv 1\,\,(\text{mod}\,\,4)}}\frac{4p^{2s}-3p^s+1}{p^{2s}(p^s+1)}.
\end{align*}
\end{lemma}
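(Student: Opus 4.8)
The plan is to run a standard Perron's formula argument against the Dirichlet series attached to the left-hand side, the only genuinely non-routine ingredient being a fourth moment estimate for Dirichlet $L$-functions, needed to obtain the power-saving error term. First I would set up the Dirichlet series. Writing $r(n)=4\sum_{ab=n}\chi_4(a)$ shows that $n\mapsto r^2(n)/16$ is multiplicative, with local factor at an odd prime $p$ equal to $\sum_{k\ge 0}(k+1)^2 p^{-ks}$ when $p\equiv 1\pmod 4$ and $(1-p^{-2s})^{-1}$ when $p\equiv 3\pmod 4$. The congruences $n\equiv 1\pmod 4$ and $n\equiv a\pmod q$ amount to $n\equiv\Delta\pmod{4q}$ for the CRT combination $\Delta$, which I detect by orthogonality of the characters $\psi\pmod{4q}$; this is legitimate since $(a,q)=1$ together with $n\equiv 1\pmod 4$ forces $(n,4q)=1$. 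The divisibility condition $d\mid n$ (with $d$ squarefree) is handled by restricting the Euler factor at each $p\mid d$ to exponents $k\ge 1$, which multiplies that local factor by an explicit ratio; computing these ratios identifies their product with the function $A(s,d)$ in the statement. Putting this together, the generating Dirichlet series becomes a combination over $\psi\pmod{4q}$ of the terms $\bar\psi(\Delta)\,A(s,d)\,L(s,\psi)^2 L(s,\psi\chi_4)^2/L(2s,\psi^2)$, the quotient being the complete $\psi$-twisted Dirichlet series of $r^2/16$, obtained from the identity $L_p(s,\psi)=\bigl(1-\psi(p)^2 p^{-2s}\bigr)\big/\bigl((1-\psi(p)p^{-s})^2(1-\psi\chi_4(p)p^{-s})^2\bigr)$, valid for $p\nmid 4q$.

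Next I would isolate the main term. Only two characters $\psi\pmod{4q}$ contribute a pole at $s=1$: the principal character and the character induced by $\chi_4$; for each of these one of $\psi$, $\psi\chi_4$ is principal, so $L(s,\psi)^2 L(s,\psi\chi_4)^2$ has a double pole there, whereas for every other $\psi$ the corresponding term is holomorphic and of polynomial growth on $\Re s\ge \tfrac12+\epsilon$ (note that $1/L(2s,\psi^2)$ has no poles in that half-plane). Since $\Delta\equiv 1\pmod 4$, the factor $\bar\psi(\Delta)$ equals $1$ for both contributing characters, and collecting the Euler factors at the primes dividing $4q$ into the finite product $G(s,4q)$ shows that the polar part of the generating series is a fixed rational multiple of $\varphi(4q)^{-1}\zeta(s)^2 H(s,d,4q)$, with $H$ exactly as defined.

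Then comes Perron's formula: applying the truncated version and shifting the contour to $\Re s=\tfrac12+\epsilon$, the double pole at $s=1$ produces the main term, and the expansion $\zeta(s)^2=(s-1)^{-2}+2\gamma(s-1)^{-1}+O(1)$ together with the kernel $x^s/s$ yields precisely the shape $x\bigl(\log x+2\gamma-1+H'(1,d,4q)/H(1,d,4q)\bigr)$ asserted in the statement. For the shifted integral I would estimate $L(s,\psi)^2 L(s,\psi\chi_4)^2$ on the line $\Re s=\tfrac12+\epsilon$ using H\"older's inequality and the fourth moment bound $\int_0^T |L(\tfrac12+it,\psi)|^4\,dt\ll_\epsilon (qT)^{1+\epsilon}$, carrying the $d$-dependence through $|A(\tfrac12+\epsilon+it,d)|\ll d^{-1/2+\epsilon}$ and the $q$-dependence through the $O(\log q)$ Euler factors at $p\mid 4q$; optimising the truncation height $T$ then gives the claimed bound $O_\epsilon(q x^{3/4+\epsilon})$.

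The genuine difficulty lies entirely in this error-term estimate: convexity alone is too weak once the two $L$-functions are squared, so the fourth moment is essential, and one must propagate the dependence on $d$ and $q$ uniformly through both the moment estimate and the Perron truncation. Once Lemma~\ref{theo:Perron} is in hand, Lemma~\ref{lemma:3} follows by expanding $H(1,d,4q)$, $G(1,4q)$ and $A(1,d)$ into their defining Euler products and matching the prime-by-prime contributions against the multiplicative functions $g_3,g_4,g_5,g_6$ and the constant $A_2$.
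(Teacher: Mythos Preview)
Your proposal is correct and follows essentially the same route as the paper: character orthogonality modulo $4q$, factorisation of the generating Dirichlet series as $L(s,\psi)^2L(s,\psi\chi_4)^2/L(2s,\psi^2)$ times the local correction $A(s,d)$, Perron's formula with a contour shift, and the fourth moment of Dirichlet $L$-functions (via Cauchy--Schwarz) to control the shifted integral. Two small differences are worth noting: by shifting only to $\Re s=\tfrac12+\epsilon$ rather than to $\tfrac12$ you place $1/L(2s,\psi^2)$ in the region of absolute convergence and thereby sidestep the paper's separate treatment of that factor (its Lemma handling the possible exceptional zero of $L(s,\psi^2)$ on the $1$-line); and you correctly observe that \emph{two} characters, the principal one and the lift of $\chi_4$, contribute identical polar terms (since $\Delta\equiv 1\pmod 4$), a point the paper's sketch elides by attributing the pole to the trivial character alone.
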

\begin{proof}[Proof of Lemma~\ref{lemma:3}]
One obtains the main term as stated in Lemma \ref{lemma:3} by taking the logarithmic derivative of $H(s,d,4q)$ (taking an appropriate branch-cut) and evaluating at $s=1.$ 
\end{proof}

We now outline the proof of Lemma~\ref{theo:Perron}. The proof uses a standard application of Perron's formula. The following lemma, which combines \cite[Theorem II.8.20]{Tenen} and \cite[Theorem II.8.22]{Tenen}, will prove necessary. 

\begin{lemma}\label{lemma:appendixtech} Let $\mathcal{L}= \log{(|t|+Q+1)}$ and let $\chi_Q\,\,(\text{mod}\,\,Q)$ be a Dirichlet character. For $\sigma \geq 1$ we have the following
\begin{enumerate}
	\item If $\chi_{Q}^2$ is complex then $L(s,\chi_Q^2)^{-1} \ll \mathcal{L}^7.$
	\item If $\chi_Q^2$ is real, non-trivial, then there exists an absolute constant $c_0>0$ such that
	$$
L(s,\chi_Q^2)^{-1} \ll 
\begin{cases}
\mathcal{L}^6 (\mathcal{L}+1/|t|)\,\,&\text{if $|t| > c_0 Q^{-\frac{1}{2}}(\log{2Q})^{-2}$,} \\
Q^{\frac{1}{2}}\,\,&\text{if $|t| \leq c_0 Q^{-\frac{1}{2}}(\log{2Q})^{-2}$.} \\
\end{cases}
$$
\end{enumerate}
\end{lemma}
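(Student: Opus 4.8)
Up to the reduction from $\chi_Q^2$ to the primitive character inducing it, the statement is a direct combination of the two quoted results from Tenenbaum's book, so the plan is essentially to assemble these and absorb the discrepancy in the modulus. First I would let $\chi^*$ be the primitive character of conductor $q^*\mid Q$ inducing $\chi_Q^2$, so that $L(s,\chi_Q^2)=L(s,\chi^*)\prod_{p\mid Q}\bigl(1-\chi^*(p)p^{-s}\bigr)$. For $\sigma\geq1$ each Euler factor satisfies $\tfrac12\leq|1-\chi^*(p)p^{-s}|\leq2$ and $|1-\chi^*(p)p^{-s}|^{-1}\leq p/(p-1)$, so the finite product is bounded below by an absolute constant and above by $Q/\varphi(Q)\ll\log\log Q\ll\mathcal{L}$. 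It therefore suffices to bound $L(s,\chi^*)^{-1}$ for $\sigma\geq1$: the transfer back to $\chi_Q^2$ costs at most a factor $\mathcal{L}$, harmless for all the stated exponents. Note too that $\chi^*$ is complex, real non-trivial, or trivial exactly according to the type of $\chi_Q^2$ (a real non-trivial character being automatically quadratic), and has conductor at most $Q$.

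When $\chi^*$ is complex, $(\chi^*)^2$ is non-principal, and the classical de la Vall\'ee Poussin argument applied to $\zeta(\sigma)^3L(\sigma+it,\chi^*)^4L(\sigma+2it,(\chi^*)^2)$ gives $L(s,\chi^*)^{-1}\ll\mathcal{L}^{6}$ on $\sigma\geq1$; this is \cite[Theorem II.8.20]{Tenen}, and combined with the extra factor $\mathcal{L}$ from the primitivity reduction it yields part~(1). When $\chi^*$ is real and non-trivial, the same argument must accommodate a possible exceptional (Siegel) zero on the real axis near $s=1$: for $|t|$ bounded below as in the statement the standard zero-free region for real characters gives $L(s,\chi^*)^{-1}\ll\mathcal{L}^{6}(\mathcal{L}+1/|t|)$, which is \cite[Theorem II.8.22]{Tenen}, the $1/|t|$ term quantifying proximity to the potential exceptional zero.

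It remains to treat the range $|t|\leq c_0Q^{-1/2}(\log{2Q})^{-2}$ in part~(2), the only point not already packaged in the cited theorems. Here I would use the elementary lower bound $L(1,\chi^*)\gg(q^*)^{-1/2}\geq Q^{-1/2}$ for real primitive (hence quadratic) characters, together with the derivative estimate $L'(\sigma+it,\chi^*)\ll\mathcal{L}^{2}$ on $\sigma\geq1$, to write
\[
|L(s,\chi^*)|\geq L(1,\chi^*)-O\bigl(|t|\,\mathcal{L}^{2}\bigr)\gg Q^{-1/2}
\]
once $c_0$ is small enough; thus $L(s,\chi^*)^{-1}\ll Q^{1/2}$, and transferring back to $\chi_Q^2$ preserves this up to a factor $\mathcal{L}\ll Q^{1/2}$. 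I anticipate no serious obstacle: the only genuinely delicate ingredient is the bound $L(1,\chi^*)\gg(q^*)^{-1/2}$ — which is where the $Q^{1/2}$ in the statement originates, and which is elementary rather than relying on Siegel's theorem — and the main care needed is consistent bookkeeping of the $\mathcal{L}$-exponents through the primitivity reduction and of the constant $c_0$, shared between the zero-free-region input and the small-$|t|$ estimate.
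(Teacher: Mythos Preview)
Your proposal is correct and follows the same approach as the paper, which simply cites the lemma as a direct combination of \cite[Theorem~II.8.20]{Tenen} and \cite[Theorem~II.8.22]{Tenen} without giving any further proof. Your additional elaboration on the primitivity reduction and the small-$|t|$ case via $L(1,\chi^*)\gg (q^*)^{-1/2}$ is a reasonable fleshing-out of details the paper leaves implicit; just note that your mean-value argument as written controls only the variation in $t$, so strictly you should also remark that for $\sigma$ bounded away from $1$ the bound is trivial, while for $\sigma$ close to $1$ the same derivative estimate handles the variation in $\sigma$ as well.
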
 

Write $Q=4q$ and let $\Delta\,\,(\text{mod}\,\,Q)$ be the unique solution to the congruences $\Delta\equiv a\,\,(\text{mod}\,\,q)$ and $\Delta\equiv 1\,\,(\text{mod}\,\,4)$.
Since $(\Delta,Q)=1$ by character orthogonality we can write
\begin{equation*}
\sum_{\substack{n\leq x \\ n\equiv \Delta\,\,(\text{mod}\,\,Q) \\ d|n}}r^2(n) = \frac{1}{\varphi(Q)}\sum_{\chi_{Q}\,\,(\text{mod}\,\,Q)} \overline{\chi(\Delta)} \sum_{\substack{n\leq x \\ d|n}}r^2(n)\chi_Q(n).
\end{equation*}
We study these sums by considering their generating series
\begin{equation*}
F_d(s,\chi_Q)=\sum_{n=1}^{\infty}\frac{r^2(dn)\chi_Q(dn)}{(dn)^s}
\end{equation*}
Write $F(s,\chi_Q)=F_1(s,\chi_Q).$ Then
\begin{equation*}
F(s,\chi_Q)=16L^2(s,\chi_Q)L^2(s,\chi_4\chi_Q)L(2s,\chi_Q^2)^{-1}.
\end{equation*}
Here $\chi_4$ denotes the unique non-trivial character $(\text{mod}\,\,4),$ and $L(s,\chi_D)$ denotes the L series corresponding to the Dirichlet character $\chi_D\,\,(\text{mod}\,\,D).$ It follows that
\begin{align*} \nonumber
\frac{F_d(s,\chi_Q)}{F(s,\chi_Q)} &= \prod_{p|d} \bigg(\sum_{k=1}^{\infty} \frac{r(p^k)^2\chi_Q(p^k)}{p^{ks}}\bigg)\bigg(\sum_{k=0}^{\infty} \frac{r(p^k)^2\chi_Q(p^k)}{p^{ks}}\bigg)^{-1}\\ 
&=\prod_{\substack{p|d \\ p\equiv 3\,\,(\text{mod}\,\,4)}}\frac{\chi_Q(p)^2}{p^{2s}}\prod_{\substack{p|d \\ p\equiv 1\,\,(\text{mod}\,\,4)}}\frac{\frac{4p^{2s}}{\chi_Q(p)^2}-\frac{3p^s}{\chi_Q(p)}+1}{\frac{p^{2s}}{\chi_Q(p)^2}(\frac{p^s}{\chi_Q(p)}+1)}= A(s,d,Q)
\end{align*}
say, using the fact $d$ is square-free and $(d,Q)=1$. We note the divisor bound $|r^2(n)\chi_q(n)| \leq \tau^2(n) \leq e^{C\frac{\log{n}}{\log\log{n}}}$ for some explicit $C>0.$ We can apply an effective form of Perron's formula (for example \cite[Corollary II.2.4]{Tenen}), averaging over the height $T,$ to obtain
\begin{align} \nonumber
\sum_{\substack{n\leq x \\ n\equiv \Delta\,\,(\text{mod}\,\,Q) \\ d|n}}r^2(n) &= I(T;Q,\Delta,d)+O\bigg(\frac{x(\log{x})^2}{T}+ e^{\frac{2C\log{x}}{\log\log{x}}}\bigg(1+\frac{x\log{T}}{T}\bigg)\bigg),
\end{align}
where
\begin{equation*}
I = \frac{1}{\varphi(Q)}\sum_{\chi_Q\,\,(\text{mod}\,\,Q)}\overline{\chi_Q(\Delta)}\bigg[\frac{1}{T}\int_T^{2T} \bigg(\frac{1}{2\pi i}\int_{c-it_0}^{c+it_0} \frac{16L^2(s,\chi_Q)L^2(s,\chi_4\chi_Q)A(s,d,Q)x^s}{L(2s,\chi_Q^2)s} \mathrm{d}s\bigg)\mathrm{d}t_0\bigg].
\end{equation*}
Here $c=1+1/\log{x}.$ We move the contour to the region defined by $[c-it_0,c+it_0],$ $[1/2+it_0,c+it_0],$ $[1/2-it_0,1/2+it_0]$ and $[1/2-it_0,c+it_0].$ The integrand has a pole of order 2 at $s=1,$ coming from the trivial character $\chi_0\,\,(\text{mod}\,\,Q)$. The residue of this pole is 
\begin{equation*}
H(1,d,Q)\bigg(\log{x}+2\gamma-1+\frac{H'(1,d,Q)}{H(1,d,Q)}\bigg)x,
\end{equation*}
where
\begin{align*}
H(s,d,Q)&=\frac{16L^2(s,\chi_4)G(s,Q)A(s,d)}{\zeta(2s)},\\
G(s,Q)&= \prod_{p|Q}\bigg(1-\frac{1}{p^s}\bigg)\bigg(1+\frac{1}{p^s}\bigg)^{-1}\bigg(1-\frac{\chi_4(p)}{p^s}\bigg)^{2}, \\
A(s,d) &= \prod_{\substack{p|d \\ p\equiv 3\,\,(\text{mod}\,\,4)}}\frac{1}{p^{2s}}\prod_{\substack{p|d \\ p\equiv 1\,\,(\text{mod}\,\,4)}}\frac{4p^{2s}-3p^s+1}{p^{2s}(p^s+1)}.
\end{align*}
We need bounds for the integrand in the region $\sigma \geq 1/2$ and $|t|\leq 2T.$ One can easily check the trivial bound $|A(s,d,Q)|\ll 1$ (uniformly in $d$ and $q$). We require a lower bound for $|L(2s,\chi_Q^2)|$ in this region. If $\chi_Q$ is real then $\chi_Q^2$ is the trivial character, and so we have
\begin{equation*}
L(2s,\chi_Q^2) = \zeta(2s) \prod_{p|Q}\bigg(1-\frac{1}{p^{2s}}\bigg).
\end{equation*}
In the region $\sigma \geq \frac{1}{2}$ we can bound the product from below by $\varphi(Q)/Q.$ Standard bounds for $\zeta(s)$ on the 1-line (see for example \cite[Theorem II.3.9]{Tenen}) then tell us that
\begin{equation*}
L(2s,\chi_Q^2)^{-1} \ll  \frac{Q}{\varphi(Q)} \log{(|t|+2)} \ll_{\epsilon} (QT)^{\epsilon}.
\end{equation*}
If $\chi_{Q}$ is complex then we have to be more careful. We use Lemma~\ref{lemma:appendixtech}, and see there is a minor technical complication where we must bound the contribution from $s=1/2+it$ with $|t|\leq 2c_0$ (say) separately (with this choice of cut-off we can use the bound $L(2s,\chi_Q^2) \ll (QT)^{\epsilon}$ uniformly in $\chi_Q$ for all the other contours). One can easily show the contribution from these values of $s$ is $\ll Q^{\frac{1}{2}},$ which is small. 

To bound the contribution from the other integrals, we can use the fourth-moment estimate for Dirichlet L-functions on the critical line in the form
\begin{equation*}
\frac{1}{\varphi(D)T}\sum_{\substack{\chi \,\,(\text{mod}\,\,D)}} \int_{T}^{2T}|L(1/2+it,\chi)|^{4}\mathrm{d}t \ll (DT)^{\epsilon},
\end{equation*}
(cf.~\cite[Theorem 1]{HBui}) together with the Cauchy-Schwarz inequality. One can show that, with the choice $T=x^{\frac{1}{4}},$ the contours contribute $\ll_{\epsilon}Qx^{\frac{3}{4}+\epsilon}.$ 


\section{Auxiliary estimates for $\rho(n)$}

To evaluate the sums appearing in Lemma~\ref{lemma:aux} we use the Selberg-Delange method.

\begin{lemma}[Selberg-Delange method]\label{lemma:SDL}
Let $F(s)=\sum_{n=1}^{\infty}a_n n^{-s}$ be a Dirichlet series such that the function $G(s;z)=F(s)\zeta(s)^{-z}$ can be analytically continued to the region 
\begin{equation}\label{eq:region}
\sigma > 1-\frac{c_0}{1+\log{(2+|t|)}}
\end{equation}
for some positive $c_0>0$ and $z\in \mathbb{C}$ with $|z|\leq A,$ and moreover satisfies the bounds $|G(s;z)|\ll M(1+|t|)^{1-\delta}$ for some $\delta>0$ in this region. Let $Z_1(s;z)=[\zeta(s)(s-1)]^z, Z_2(s;z)=\frac{Z_1(s;z)}{s}$ (both holomorphic in the disc $|s-1|<1$) and let
\begin{align*}
G(s;z)Z_1(s;z)&=\sum_{k=0}^{\infty}\mu_k(z)(s-1)^k, \\
G(s;z)Z_2(s;z)&=\sum_{k=0}^{\infty}\eta_k(z)(s-1)^k
\end{align*}
be the Taylor series in this region. Then for any $N\geq 0$ and $|z|\leq A$ we have
\begin{equation}
\sum_{n\leq x} \frac{a_n}{n}\log{\frac{x}{n}} = (\log{x})^{z+1}\bigg[\sum_{k=0}^{N}\frac{\mu_k(z)}{\Gamma(z+2-k)(\log{x})^k}+O(MR_N(x))\bigg],
\end{equation}
and moreover if $a_n>0$ then we also have 
\begin{equation}
\sum_{n\leq x} a_n = x(\log{x})^{z-1}\bigg[\sum_{k=0}^{N}\frac{\eta_k(z)}{\Gamma(z-k)(\log{x})^k}+O(MR_N(x))\bigg],
\end{equation}
where
\begin{equation}
R_N(x)=R_N(x;c_1,c_2) = e^{-c_1\sqrt{\log{x}}}+O\bigg(\frac{c_2N+1}{\log{(x+1)}}\bigg)^{N+1}
\end{equation}
for some suitable constants $c_1,c_2>0$. These positive constants, and the implicit constants in the Landau symbol, depend at most on $c_0,\delta$ and $A$.
\end{lemma}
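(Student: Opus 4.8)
The statement is the standard quantitative Selberg--Delange formula, so the plan is to run the classical contour-integration argument of Selberg and Delange, following the treatment in Tenenbaum's book (cf.~\cite{Tenen}, Chapter~II.5). First I would record the starting point: by Perron's formula (in its smoothed/\,$\log$-weighted form, which is what appears on the left-hand side of the two displayed identities), for $c>1$ one has
\begin{equation*}
\sum_{n\le x}\frac{a_n}{n}\log\frac{x}{n}=\frac{1}{2\pi i}\int_{c-i\infty}^{c+i\infty}\frac{F(s+1)}{\,s^2\,}x^{s}\,\mathrm{d}s,
\end{equation*}
and similarly $\sum_{n\le x}a_n=\frac{1}{2\pi i}\int_{c-i\infty}^{c+i\infty}\frac{F(s)}{s}x^{s}\,\mathrm{d}s$ when $a_n\ge 0$ (here one uses the Perron estimate together with the trivial bound $a_n\ll_\varepsilon n^{\varepsilon}$ implied by $|G(s;z)|\ll M$ on $\Re s=1+\varepsilon$). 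The factorisation $F(s)=G(s;z)\zeta(s)^{z}$ converts each integrand into $G(s;z)Z_i(s;z)$ times the appropriate power of $\zeta(s)(s-1)$, isolating the ``singular'' part $(s-1)^{-z}$.

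Next I would deform the contour into the zero-free-type region \eqref{eq:region}, which is legitimate because $G(s;z)$ is holomorphic and of polynomial growth there by hypothesis, and $\zeta(s)$ has no zeros in this region. The contour is replaced by a truncated Hankel-type contour wrapping the branch point at $s=1$: two horizontal segments and a small circle of radius $\asymp 1/\log x$ about $s=1$, plus the two horizontal truncating segments out to $|t|\asymp \exp(c\sqrt{\log x})$ where the standard zeta bounds give the saving $e^{-c_1\sqrt{\log x}}$. The main term comes from the Hankel contour around $s=1$: expanding $G(s;z)Z_1(s;z)=\sum_{k\ge 0}\mu_k(z)(s-1)^k$ (resp.\ with $Z_2$ and $\eta_k(z)$) and using the classical Hankel-integral evaluation
\begin{equation*}
\frac{1}{2\pi i}\int_{\mathcal H}(s-1)^{-z-1+k}x^{s}\,\mathrm{d}s=\frac{x(\log x)^{z-k}}{\Gamma(z+1-k)}\bigl(1+O(\cdots)\bigr)
\end{equation*}
term by term produces exactly the asserted finite sums over $k\le N$, with the tail contributing the $((c_2N+1)/\log(x+1))^{N+1}$ part of $R_N$. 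Uniformity in $z$ for $|z|\le A$ is automatic since all implied constants in the zeta estimates and in the Hankel evaluation depend only on $A$, $c_0$, $\delta$; collecting the contributions of the small circle, the horizontal segments, and the Perron error terms gives the stated error $MR_N(x)$.

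The main obstacle is not any single step but the careful bookkeeping of the error terms so that the final bound is genuinely uniform in $z$ and comes out in the clean packaged form $R_N(x)=e^{-c_1\sqrt{\log x}}+O((c_2N+1)/\log(x+1))^{N+1}$; in particular one must track how the Taylor remainder of $G(s;z)Z_i(s;z)$ on the small circle, the polynomial-growth bound $(1+|t|)^{1-\delta}$ on the horizontal pieces, and the Perron truncation error all combine, and verify that the constant $c_2$ absorbing the Taylor coefficients depends only on $A$ and the width $c_0$ of the region. Since this is precisely Theorem~II.5.2 (and its $\log$-weighted variant) of \cite{Tenen}, adapted to allow the explicit dependence on $M$ and on $N$, I would present the proof as a citation-driven adaptation rather than reproving the Hankel-contour estimates from scratch, indicating only the modifications needed to carry the parameter $M$ through and to obtain the two parallel statements (weighted sum and, under positivity, the unweighted sum) simultaneously.
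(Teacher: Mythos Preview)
Your proposal is correct and matches the paper's own approach: the paper simply cites \cite[Theorem~II.5.2]{Tenen} for the second statement and remarks that the first (log-weighted) statement follows by the same proof with minor changes. Your sketch of the underlying Selberg--Delange contour argument and your plan to present it as a citation-driven adaptation are exactly in line with this.
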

The second statement is exactly \cite[Theorem II.5.2]{Tenen}. The first statement follows by the same proof with a few minor changes.

Lemma~\ref{lemma:aux} follows from suitable applications of Lemma \ref{lemma:SDL}. We sketch the details in each case.

\begin{proof}[Proof of Lemma~\ref{lemma:aux}]
\begin{enumerate}[label=(\roman*), wide, labelwidth=!, labelindent=0pt]
	\item For $X_{N,W}$ recall from (\ref{eq:X}) the definition
\begin{equation}
X_{N,W}=\sum_{\substack{a\leq v \\ (a,W)=1 \\ p|a\Rightarrow p\equiv 1\,\,(\text{mod}\,\,4)}} \frac{\mu(a)}{a}\log{\frac{v}{a}}.
\end{equation}
Consider the generating series, for $\sigma>1$ 
	\begin{equation*}
	h_{1}(W,s) = \sum_{\substack{n=1 \\ (n,W)=1 \\ p|n\Rightarrow p\equiv 1\,\,(\text{mod}\,\,4)}}^{\infty}\frac{\mu(n)}{n^s} =  \prod_{\substack{p\nmid W \\ p\equiv 1\,\,(\text{mod}\,\,4)}}\bigg(1-\frac{1}{p^s}\bigg) = \frac{K_1(s)G_1(W,s)}{(\zeta(s)L(s,\chi_4))^{\frac{1}{2}}},
	\end{equation*} 
	where
	\begin{align*} 
	K_1(s)^2 &= \bigg(1-\frac{1}{2^s}\bigg)^{-1}\prod_{p\equiv 3\,\,(\text{mod}\,\,4)}\bigg(1-\frac{1}{p^{2s}}\bigg)^{-1}, \\
	G_1(W,s) &=  \prod_{\substack{p|W \\ p\equiv 1\,\,(\text{mod}\,\,4)}}\bigg(1-\frac{1}{p^s}\bigg)^{-1}
	\end{align*}
	(taking the positive determination of the square-root in the first instance). Both of these functions are analytic for $\sigma>3/4$ (say). $K_1(s)$ is bounded in this region, and $G_1(W,s)$ satisfies
	\begin{align*}
	|G_{1}(W,s)| &\leq  \prod_{\substack{p|W \\ p\equiv 1\,\,(\text{mod}\,\,4)}}\bigg(1-\frac{1}{p^{\frac{3}{4}}}\bigg)^{-1} = \exp\bigg[- \sum_{\substack{p\leq D_0 \\ p\equiv 1\,\,(\text{mod}\,\,4)}}\log\bigg(1-\frac{1}{p^{3/4}}\bigg)\bigg] \\
	&= \exp\sum_{\substack{p\leq D_0 \\ p\equiv 1\,\,(\text{mod}\,\,4)}}\bigg[\frac{1}{p^{3/4}}+O\bigg(\frac{1}{p^{3/2}}\bigg)\bigg] \ll  \exp\sum_{\substack{p\leq D_0 \\ p\equiv 1\,\,(\text{mod}\,\,4)}}\frac{D_0^{1/4}}{p} \\
	&\ll \exp(D_0^{1/4}\log\log{D_0}).
	\end{align*}
	We remark that with the choice $D_0 = (\log\log{N})^3,$ we certainly have the bound 
	$$\exp(D_0^{1/4}\log\log{D_0}) \ll_{\epsilon} (\log{N})^{\epsilon}$$
	for any fixed $\epsilon>0,$ and using this estimate, it is a simple task to verify that all the error terms which follow are indeed controlled. (Recall $v=N^{\theta}$ for some fixed $\theta>0$.)
	
	From the classical zero-free region for Dirichlet L-functions, $L(s)^{-1}$ can be analytically continued to a region of the form (\ref{eq:region}) for some $c_0$ and moreover satisfies a bound $L(s)^{-1}\ll \log{(2+|t|)}$ in this region (cf. \cite[Chapter 14]{Davenport}). Thus we can apply Lemma~\ref{lemma:SDL} with $M=\exp(D_0^{1/4}\log\log{D_0}), z=-\frac{1}{2},N=0$ and some suitable choices of $c_0,\delta,$ to obtain 
\begin{equation*}
X_{N,W} = \frac{K_1(1)G_{1}(W)}{\Gamma(3/2)\sqrt{L(1,\chi_4)}}(\log{v})^{\frac{1}{2}}+O\bigg(\frac{\exp (D_0^{\frac{1}{4}}\log\log{D_0})}{(\log{v})^{\frac{1}{2}}}\bigg).
\end{equation*}	 
Here we have written $G_{1}(W)=G_{1}(W,1)$ (and this convention will be continued below the fold). This simplifies to the stated result. Note that 
$$G_1(W) = \prod_{\substack{p\leq D_0 \\ p\equiv 1\,\,(\text{mod}\,\,4)}}\bigg(1-\frac{1}{p}\bigg)^{-1} \asymp (\log{D_0})^{\frac{1}{2}}$$
by Mertens' theorem. 
\\ 
	\item For $Y_{N,W}$ recall from (\ref{eq:Y}) the definition
\begin{equation}
Y_{N,W}=\sum_{\substack{a,b\leq v\\ (a,W)=(b,W)=1 \\ (a,b)=1 \\ p|a,b\Rightarrow p\equiv 1\,\,(\text{mod}\,\,4)}} \frac{\mu(a)\mu(b)}{g_7(a)g_7(b)}\log{\frac{v}{a}}\log{\frac{v}{b}}.
\end{equation}
Take the sum over $b$ on the inside. We can evaluate 
	\begin{equation*}
	\sum_{\substack{b\leq v \\ (b,aW)=1 \\ p|b\Rightarrow p\equiv 1\,\,(\text{mod}\,\,4)}}\frac{\mu(b)}{g_7(b)}\log{\frac{v}{b}}
	\end{equation*}
	using the generating function, for $\sigma>1$
	\begin{align} \nonumber
	h_2(aW,s) &= \sum_{\substack{n=1 \\ (n,aW)=1 \\ p|n\Rightarrow p\equiv 1\,\,(\text{mod}\,\,4)}}^{\infty}\frac{\mu(n)n}{n^sg_7(n)} =  h_{1}(aW,s)K_2(s)G_2(aW,s),
	\end{align}
	where 
	\begin{align*} 
	K_2(s) &= \prod_{p\equiv 1\,\,(\text{mod}\,\,4)}\bigg(1+\frac{1}{(p^s-1)(p+1)}\bigg), \\
	G_2(aW,s) &= \prod_{\substack{p|aW\\ p\equiv 1\,\,(\text{mod}\,\,4)}}\bigg(1+\frac{1}{(p^s-1)(p+1)}\bigg)^{-1}.
	\end{align*}
	Arguing as above, Lemma \ref{lemma:SDL} yields
	\begin{equation*}
	Y_{N,W} = T_1+T_2+O(T_3),
	\end{equation*}
	this time taking $M=\exp(D_0^{1/4}\log\log{D_0}), z=-1/2$ and $N=1.$ Here
	\begin{align*}
	T_3 &= \frac{M}{(\log{v})^{\frac{3}{2}}}\sum_{\substack{a\leq v \\ (a,W)=1 \\ p|a \Rightarrow p\equiv 1\,\,(\text{mod}\,\,4)}}\frac{\mu^2(a)\tau(a)}{g_7(a)}\log{\frac{v}{a}} \ll \frac{M}{(\log{v})^{\frac{1}{2}}} \prod_{\substack{p\leq v \\ p\equiv 1\,\,(\text{mod}\,\,4)}}\bigg(1+\frac{2}{p+1}\bigg) \\
	&\ll \frac{M}{(\log{v})^{\frac{1}{2}}} \prod_{\substack{p\leq v \\ p\equiv 1\,\,(\text{mod}\,\,4)}}\bigg(1+\frac{1}{p}\bigg)^2 \ll M (\log{v})^{\frac{1}{2}} = \exp(D_0^{1/4}\log\log{D_0})(\log{v})^{\frac{1}{2}}.
	\end{align*}
	To evaluate $T_2,$ we need a handle on the second Taylor series coefficient. With notation as in the statement of Lemma~\ref{lemma:SDL}, this is labelled $\mu_1(-1/2)$ and is precisely equal to 
	$$G(1)Z_1'(1;-1/2)+G'(1) Z_1(1;-1/2),$$
	where
	$$G(s) = \frac{K_1(s)G_1(aW,s)K_2(s)G_2(aW,s)}{L(s,\chi_4)^{1/2}}$$ 
	(taking an appropriate branch-cut). Here all derivatives are taken with respect to $s$. By a series expansion, one can directly show that $Z_1(1;-1/2)=1$ and $Z_1'(1;-1/2)=-\gamma/2,$ where $\gamma$ is the Euler-Mascheroni constant. We now note the following:
	\begin{align*} 
	G_1(p)G_2(p)&=\frac{g_7(p)}{p}, \\
	G_1'(d)&= G_1(d) \sum_{\substack{p|d \\ p\equiv 1\,\,(\text{mod}\,\,4)}}\frac{\log{p}}{p-1}, \\
	G_2'(d)&= G_2(d) \sum_{\substack{p|d \\ p\equiv 1\,\,(\text{mod}\,\,4)}}\frac{\log{p}}{p(p-1)}.
	\end{align*}
	Here we have defined $G_i'(d)=\frac{\mathrm{d}G_i(d,s)}{\mathrm{d}s}\big|_{s=1}$ for $i\in\{1,2\}$. These results are valid for primes $p\equiv 1\,\,(\text{mod}\,\,4),$ and for any square-free $d.$ Thus one can write $T_2$ as a finite linear combination
	\begin{align*}
	 T_2 &= \frac{1}{(\log{v})^{\frac{1}{2}}}\sum_{i} c_i \alpha_i(W_1) R_i,
	 \end{align*}
	 where $c_i\in \mathbb{R}$ are bounded, 
	 \begin{equation*}
	 \alpha_i(W_1)\in \bigg\{\frac{g_7(W_1)}{W_1}, \,\,\frac{g_7(W_1)}{W_1}\sum_{p|W_1}\frac{\log{p}}{p-1},\,\,\frac{g_7(W_1)}{W_1}\sum_{p|W_1}\frac{\log{p}}{p(p-1)}\bigg\},
	 \end{equation*}
	 and $R_i$ is one of 
	 \begin{align*}
	 &\sum_{\substack{a\leq v \\ (a,W)=1 \\ p|a \Rightarrow p\equiv 1\,\,(\text{mod}\,\,4)}}\frac{\mu(a)}{a}\log{\frac{v}{a}} \ll (\log{D_0})^{1/2}(\log{v})^{\frac{1}{2}} 
	 \end{align*}
	 or
	 \begin{align*}
	&\sum_{\substack{D_0<q<v \\ q\equiv 1\,\,(\text{mod}\,\,4)}} \frac{\log{q}}{q-1}\sum_{\substack{a\leq v \\ (a,W)=1 \\ q|a \\ p|a \Rightarrow p\equiv 1\,\,(\text{mod}\,\,4)}}\frac{\mu(a)}{a}\log{\frac{v}{a}} \ll \exp(D_0^{1/4}\log\log{D_0})(\log{v})^{1/2},
	\end{align*}
	or finally
	\begin{align*}
	&\sum_{\substack{D_0<q<v \\ q\equiv 1\,\,(\text{mod}\,\,4)}} \frac{\log{q}}{q(q-1)}\sum_{\substack{a\leq v \\ (a,W)=1 \\ q|a \\ p|a \Rightarrow p\equiv 1\,\,(\text{mod}\,\,4)}}\frac{\mu(a)}{a}\log{\frac{v}{a}} \ll \frac{(\log{D_0})^{3/2}(\log{v})^{1/2}}{D_0^2}.
	\end{align*}
	Here $q$ denotes a prime variable (a convention that will also be used below the fold). The first estimate follows directly from our expression for $X_{N,W}$ found above. For the last two estimates, we note that the inner sum appearing in both is precisely $X_{N,W}-X_{N,qW},$ and from our work above it follows that 
	\begin{align*}
	X_{N,W} - X_{N,qW} &= \frac{K_1(1)}{\Gamma(3/2)\sqrt{L(1,\chi_4)}}(\log{v})^{\frac{1}{2}}(G_1(W)-G_1(qW)) + O\bigg(\frac{\exp (D_0^{\frac{1}{4}}\log\log{D_0})}{(\log{v})^{\frac{1}{2}}}\bigg) \\
	&\ll \frac{(\log{D_0})^{\frac{1}{2}}(\log{v})^{\frac{1}{2}}}{q} + \frac{\exp (D_0^{\frac{1}{4}}\log\log{D_0})}{(\log{v})^{\frac{1}{2}}}.
	\end{align*}
	Now, standard estimates for sums over primes such as
	$$\sum_{\substack{q>D_0 \\ q\equiv 1\,\,(\text{mod}\,\,4)}} \frac{\log{q}}{q(q-1)} \ll\frac{\log{D_0}}{D_0},$$ 
	yields the results stated.
	
	Now, using the fact 
	$$\sum_{p|W_1}\frac{\log{p}}{p-1}\ll \log{D_0},$$
	and 
	\begin{align*}
	\frac{g_7(W_1)}{W_1} &= \prod_{\substack{p \leq D_0 \\ p\equiv 1\,\,(\text{mod}\,\,4)}}\bigg(1+\frac{1}{p}\bigg) \ll (\log{D_0})^{\frac{1}{2}},
	\end{align*}
	it follows that the total contribution from $T_2$ is
	\begin{align*}
	T_2 \ll  (\log{D_0})^{3/2}\exp(D_0^{1/4}\log\log{D_0})(\log{v})^{1/2}
	\end{align*}
	which is negligible. 
	
	Finally, we can write
	\begin{equation*}
	T_1 = \frac{K_1(1)K_2(1)G_1(W_1)G_2(W_1)(\log{v})^{\frac{1}{2}}}{\Gamma(3/2)\sqrt{L(1,\chi_4)}}\sum_{\substack{a\leq v \\ (a,W)=1 \\ p|a \Rightarrow p\equiv 1\,\,(\text{mod}\,\,4)}}\frac{\mu(a)}{a}\log{\frac{v}{a}}
	\end{equation*}
	This last sum is exactly $X_{N,W}.$ Using our bound from part (i), and also the fact 
	\begin{equation*}
	K_2(1)G_2(W_1)=\prod_{\substack{p>D_0 \\ p\equiv 1\,\,(\text{mod}\,\,4)}}\bigg(1+\frac{1}{p^2-1}\bigg) = 1+O(D_0^{-1}),
	\end{equation*}
	we arrive at a final estimate of
	\begin{align*}
	Y_{N,W} = (1+O(D_0^{-1}))\bigg[&\frac{K_1(1)G_{1}(W)}{\Gamma(3/2)\sqrt{L(1,\chi_4)}}\bigg]^2\log{v}\\
	&+O((\log{D_0})^{3/2}\exp(D_0^{1/4}\log\log{D_0})(\log{v})^{1/2}).
	\end{align*}
	\\ 
	\item For $Z_{N,W}^{(1)}$ recall from (\ref{eq:Z1}) the definition
\begin{equation}
Z_{N,W}^{(1)}=\sum_{\substack{a,b\leq v\\ (a,W)=(b,W)=1 \\ p|a,b\Rightarrow p\equiv 1\,\,(\text{mod}\,\,4)}}\frac{\mu(a)\mu(b)g_4([a,b])}{g_2(a)g_2(b)[a,b]}\log{\frac{v}{a}}\log{\frac{v}{b}}.
\end{equation}
We write this as 
	\begin{equation*}
	\sum_{\substack{a\leq v \\ (a,W)=1 \\ p|a\Rightarrow p\equiv 1\,\,(\text{mod}\,\,4)}}\frac{\mu(a)g_4(a)}{g_2(a)a}\log{\frac{v}{a}}\sum_{d|a}\frac{d}{g_4(d)}\sum_{\substack{b\leq v \\ (b,W)=1 \\ (b,a)=d \\ p|b\Rightarrow p\equiv 1\,\,(\text{mod}\,\,4)}}\frac{\mu(b)g_4(b)}{g_2(b)b}\log{\frac{v}{b}}.
	\end{equation*}
	Now substitute $b=md.$ The inner sums can be rewritten as 
	\begin{equation*}
	\sum_{d|a}\frac{\mu(d)}{g_2(d)} \sum_{\substack{m\leq v/d \\ (m,aW)=1 \\ p|m\Rightarrow p\equiv 1\,\,(\text{mod}\,\,4)}}\frac{\mu(m)g_4(m)}{g_2(m)m}\log{\frac{v}{md}},
	\end{equation*}
	where we have used the fact $a$ is square-free. To handle the inner sum we consider the generating series 
		\begin{align*} \nonumber
	h_{3}(aW,s) &= \sum_{\substack{n=1 \\ (n,aW)=1 \\ p|n\Rightarrow p\equiv 1\,\,(\text{mod}\,\,4)}}^{\infty}\frac{\mu(n)g_4(n)}{n^sg_2(n)} =\frac{K_3(s)K_4(s)G_{3}(aW,s)G_{4}(aW,s)}{\zeta(s)L(s,\chi_4)},
	\end{align*}
	where
	\begin{align*}
	K_3(s)&=\bigg(1-\frac{1}{2^s}\bigg)^{-1}\prod_{p\equiv 3\,\,(\text{mod}\,\,4)}\bigg(1-\frac{1}{p^{2s}}\bigg)^{-1} \prod_{p\equiv 1\,\,(\text{mod}\,\,4)}\bigg(1-\frac{1}{(p^s-1)^2}\bigg), \\
	K_4(s)&=\prod_{p\equiv 1\,\,(\text{mod}\,\,4)}\bigg(1+\frac{5p-3}{(p+1)(2p-1)(p^s-2)}\bigg), \\
	G_{3}(aW,s) &= \prod_{\substack{p|aW \\ p\equiv 1\,\,(\text{mod}\,\,4)}}\bigg(1-\frac{2}{p^s}\bigg)^{-1}, \\
	G_{4}(aW,s) &= \prod_{\substack{p|aW \\ p\equiv 1\,\,(\text{mod}\,\,4)}}\bigg(1+\frac{5p-3}{(p+1)(2p-1)(p^s-2)}\bigg)^{-1}.
	\end{align*}
	These four functions are analytic in the region $\sigma > 3/4$ (say), where they all satisfy the bound $O(1)$ except for $G_3.$ Since $(a,W)=1$, by multiplicativity we can write $G_3(aW,s) =  G_3(a,s)G_3(W,s).$ As above, we can bound $|G_3(W,s)| \ll \exp(D_0^{1/4}\log\log{D_0})$ in this region. On the other hand, note that
	$$ 1\leq \bigg(1-\frac{2}{p^{3/4}}\bigg)^{-1} \leq 2$$
	whenever $p\geq 7.$ Since we are assuming $a$ is square-free, it follows that we can bound $|G_3(a,s)| \ll \tau(a).$

	  By similar arguments to part (i) we can apply Lemma \ref{lemma:SDL} taking $z=-1, M=\tau(a)\exp(D_0^{1/4}\log\log{D_0})$ and suitable $\delta,c_0.$ Note that for $N\geq 1$ the terms $\mu_k(z)(\log{x})^{z+1-k}/\Gamma(z+2-k)$ are 0 for $1\leq k\leq N.$ Hence we can choose $N=\left \lfloor (\log{x})/ec_1 \right \rfloor$ (for some suitable $c_1>0$) to balance the error terms, yielding a stronger error term of the form $O(Me^{-c_1\sqrt{\log{x}}}).$ Using this choice of $N$ we obtain
	\begin{equation*}
	 \sum_{\substack{m\leq v/d \\ (m,aW)=1 \\ p|m\Rightarrow p\equiv 1\,\,(\text{mod}\,\,4)}}\frac{\mu(m)g_4(m)}{g_2(m)m}\log{\frac{v}{md}} = \frac{K_3(1)K_4(1)G_{3}(aW)G_4(aW)}{L(1,\chi_4)} +O\bigg(Me^{-c_1\sqrt{\log{v/d}}}\bigg).
	\end{equation*}
	This error term contributes
	\begin{equation*}
	 \ll \exp(D_0^{1/4}\log\log{D_0})\sum_{\substack{a\leq v \\ p|a\Rightarrow p\equiv 1\,\,(\text{mod}\,\,4)}}\frac{\mu^2(a)g_4(a)\tau(a)}{g_2(a)a}\log{\frac{v}{a}}\sum_{d|a}\frac{\mu^2(d)}{g_2(d)}e^{-c_1\sqrt{\log{v/d}}}.
	\end{equation*}
	Swapping sums yields
	\begin{equation*}
	\sum_{\substack{d\leq v \\p|d\Rightarrow p\equiv 1\,\,(\text{mod}\,\,4)}}\frac{\mu^2(d)g_4(d)\tau(d)}{g_2(d)^2d}e^{-c_1\sqrt{\log{v/d}}} \sum_{\substack{m\leq v/d \\ (m,d)=1 \\ p|m\Rightarrow p\equiv 1\,\,(\text{mod}\,\,4)}}\frac{\mu^2(m)g_4(m)\tau(m)}{g_2(m)m}\log{\frac{v}{md}}
	\end{equation*}
	The inner sum can be bounded by
	\begin{align*} \nonumber
	&\ll \log{\frac{v}{d}} \prod_{\substack{p\leq v/d \\ p\equiv 1\,\,(\text{mod}\,\,4)}}\bigg(1+\frac{2(4p^2-3p+1)}{p(p+1)(2p-1)}\bigg) \ll \log{\frac{v}{d}} \prod_{\substack{p\leq v/d\\ p\equiv 1\,\,(\text{mod}\,\,4)}}\bigg(1+\frac{4}{p}\bigg), \\
	&\ll \log{\frac{v}{d}}\prod_{\substack{p\leq v/d\\ p\equiv 1\,\,(\text{mod}\,\,4)}}\bigg(1+\frac{1}{p}\bigg)^4 \ll  \bigg(\log{\frac{v}{d}}\bigg)^3 \ll e^{c_2\sqrt{\log{v/d}}}
	\end{align*}
	for some suitably small $c_2>0.$ Note this final bound is valid for all $d\leq v.$ Using the fact $g_4(d)/g_2(d)^2 \leq 1$, we see the total error is 
	\begin{align}\label{eq:apperror}
	&\ll \exp(D_0^{1/4}\log\log{D_0}) \sum_{\substack{d\leq v \\p|d\Rightarrow p\equiv 1\,\,(\text{mod}\,\,4)}}\frac{\mu^2(d)\tau(d)}{d}e^{-c_3\sqrt{\log{v/d}}} 
	\end{align}
	for some $0<c_3<c_1.$ For this sum, we use the generating series, for $\sigma>1$
	\begin{equation*}
	\sum_{\substack{n=1 \\ p|n\Rightarrow p\equiv 1\,\,(\text{mod}\,\,4)}}^{\infty} \frac{\mu^2(n)\tau(n)}{n^s} = \prod_{p\equiv 1\,\,(\text{mod}\,\,4)}\bigg(1+\frac{2}{p^s}\bigg) = \frac{\zeta(s)L(s,\chi_4)K(s)}{\zeta(2s)L(2s,\chi_4)},
	\end{equation*}
	where
	\begin{equation*}
	K(s) = \bigg(1+\frac{1}{2^s}\bigg)^{-1}\prod_{p\equiv 3\,\,(\text{mod}\,\,4)}\bigg(1+\frac{1}{p^{2s}}\bigg)^{-1}\prod_{p\equiv 1\,\,(\text{mod}\,\,4)}\bigg(1+\frac{1}{p^s(p^s+2)}\bigg).
	\end{equation*}
	By the second part of Lemma \ref{lemma:SDL}, taking $z=1$ and $N=\left \lfloor (\log{x})/ec_4 \right \rfloor$ for some suitable $c_4>0$ we get
	\begin{equation*}
	\sum_{\substack{d\leq v \\ p|d\Rightarrow p\equiv 1\,\,(\text{mod}\,\,4)}} \mu^2(d)\tau(d) = \frac{L(1,\chi_4)K(1)}{\zeta(2)L(2,\chi_4)}v+O(ve^{-c_4\sqrt{\log{v}}}).
	\end{equation*}
	Now by splitting the sum at $v^{1-\epsilon}$ and using partial summation one can show that 
	$$ \sum_{\substack{d\leq v \\p|d\Rightarrow p\equiv 1\,\,(\text{mod}\,\,4)}}\frac{\mu^2(d)\tau(d)}{d}e^{-c_3\sqrt{\log{v/d}}} \ll 1+ (\epsilon + e^{-c_3\sqrt{\epsilon \log{v}}})\log{v}.$$
	Hence, taking $\epsilon=\frac{(\log\log{v})^3}{\log{v}},$ we see the error (\ref{eq:apperror}) is $O(\exp(D_0^{1/4}\log\log{D_0})(\log\log{v})^3),$ which is small. Let $\gamma_1(a)=\sum_{d|a}\mu(d)/g_2(d).$ We obtain a main term
	\begin{equation*}
	 \frac{K_3(1)K_4(1)G_{3}(W)G_4(W)}{L(1,\chi_4)}\sum_{\substack{a\leq v \\ (a,W)=1 \\ p|a\Rightarrow p\equiv 1\,\,(\text{mod}\,\,4)}}\frac{\mu(a)g_4(a)\gamma_1(a)G_{3}(a)G_4(a)}{g_2(a)a}\log{\frac{v}{a}}.
	\end{equation*}
	For this sum we consider the generating series, for $\sigma>1$
	\begin{align*} \nonumber
	h_4(W,s) &= \sum_{\substack{n=1 \\ (n,W)=1 \\ p|n\Rightarrow p\equiv 1\,\,(\text{mod}\,\,4)}}\frac{\mu(n)g_4(n)\gamma_1(n)G_3(n)G_4(n)}{n^sg_2(n)} = h_1(W,s)K_5(s)G_5(W,s).
	\end{align*}
	where 
	\begin{align*}
	K_5(s) &= \prod_{p\equiv 1\,\,(\text{mod}\,\,4)} \bigg(1-\frac{(p-1)^2}{(p^s-1)(2p-1)(2p^2-p+1)}\bigg), \\
	G_5(W,s) &= \prod_{\substack{p|W \\ p\equiv 1\,\,(\text{mod}\,\,4)}}\bigg(1-\frac{(p-1)^2}{(p^s-1)(2p-1)(2p^2-p+1)}\bigg)^{-1}.
	\end{align*}
	Applying Lemma \ref{lemma:SDL} with $N=0$ gives
	\begin{align*}\label{eq:rand1}
	Z_{N,W}^{(1)} &= C_{W} (\log{v})^{\frac{1}{2}}+O(\exp(D_0^{1/4}\log\log{D_0})(\log\log{v})^3),
	\end{align*}
	where
	 \begin{equation}\label{eq:CW}
C_W= \frac{K_1(1)K_3(1)K_4(1)K_5(1)G_1(W)G_3(W)G_4(W)G_5(W)}{\Gamma(3/2)L(1,\chi_4)^{\frac{3}{2}}}
\end{equation} 
	which reduces to the stated result (note that
	\begin{align*}
	K_3(1)G_3(W) &=\frac{A^2}{g_1(W_1)^2}\prod_{\substack{p>D_0 \\ p\equiv 1\,\,(\text{mod}\,\,4)}}\bigg(1-\frac{1}{(p-1)^2}\bigg) = \frac{A^2}{g_1(W_1)^2}(1+O(D_0^{-1}))
	\end{align*}
	to get the form stated). We note for future reference that $|C_W| \asymp (\log{D_0})^{3/2}.$
	\\ 
	\item For $Z_{N,W}^{(2)}$ recall from (\ref{eq:Z2}) the definition
\begin{equation}
Z_{N,W}^{(2)}=\sum_{\substack{a,b\leq v\\ (a,W)=(b,W)=1 \\ p|a,b\Rightarrow p\equiv 1\,\,(\text{mod}\,\,4)}}\frac{\mu(a)\mu(b)g_4([a,b])}{g_2(a)g_2(b)[a,b]}\log{\frac{v}{a}}\log{\frac{v}{b}}\sum_{p|[a,b]}g_6(p).
\end{equation}
We can write
	\begin{align*} 
	Z_{N,W}^{(2)} &= \sum_{\substack{D_0<q\leq v \\ q\equiv 1\,\,(\text{mod}\,\,4)}}g_6(q) \sum_{\substack{a,b\leq v\\ (a,W)=(b,W)=1 \\ q|[a,b] \\ p|a,b\Rightarrow p\equiv 1\,\,(\text{mod}\,\,4)}}\frac{\mu(a)\mu(b)g_4([a,b])}{g_2(a)g_2(b)[a,b]}\log{\frac{v}{a}}\log{\frac{v}{b}} \\ 
	&=  \sum_{\substack{D_0<q\leq v \\ q\equiv 1\,\,(\text{mod}\,\,4)}}g_6(q) \bigg[T_1+T_2-T_3\bigg]. 
	\end{align*}
where $q$ is prime, and 
\begin{align*}
T_1 &= \sum_{\substack{a,b\leq v\\ (a,W)=(b,W)=1 \\ q|a \\ p|a,b\Rightarrow p\equiv 1\,\,(\text{mod}\,\,4)}}\frac{\mu(a)\mu(b)g_4([a,b])}{g_2(a)g_2(b)[a,b]}\log{\frac{v}{a}}\log{\frac{v}{b}}, \\
T_2 &= \sum_{\substack{a,b\leq v\\ (a,W)=(b,W)=1 \\ q|b \\ p|a,b\Rightarrow p\equiv 1\,\,(\text{mod}\,\,4)}}\frac{\mu(a)\mu(b)g_4([a,b])}{g_2(a)g_2(b)[a,b]}\log{\frac{v}{a}}\log{\frac{v}{b}}, \\
T_3 &= \sum_{\substack{a,b\leq v\\ (a,W)=(b,W)=1 \\ q|a,b \\ p|a,b\Rightarrow p\equiv 1\,\,(\text{mod}\,\,4)}}\frac{\mu(a)\mu(b)g_4([a,b])}{g_2(a)g_2(b)[a,b]}\log{\frac{v}{a}}\log{\frac{v}{b}}.
\end{align*}
$T_1$ can be evaluated similarly to part (iii) to give 
\begin{align*}
T_1 &= \frac{C_W\beta_1(q)}{q}(\log{v/q})^{\frac{1}{2}}+O\bigg(\frac{\exp(D_0^{1/4}\log\log{D_0})(\log\log{v})^3}{q}\bigg), \\
\end{align*}
where
\begin{align*}
\beta_1(q) &= \frac{\mu(q)g_4(q)\gamma_1(q)G_1(q)G_3(q)G_4(q)G_5(q)}{g_2(q)} = -\frac{q(4q^2-3q+1)}{2(q-1)(2q^2-2q+1)}.
\end{align*}
$T_2$ can be evaluated similarly. For $T_3,$ write $a=a'q, b=b'q$ then $[a,b]=[qa',qb']=q[a',b']$ so that
\begin{equation*}
T_3 = \frac{\mu^2(q)g_4(q)}{g_2(q)^2q} \sum_{\substack{a',b'\leq v/q\\ (a',qW)=(b',qW)=1 \\ p|a',b'\Rightarrow p\equiv 1\,\,(\text{mod}\,\,4)}}\frac{\mu(a')\mu(b')g_4([a',b'])}{g_2(a')g_2(b')[a',b']}\log{\frac{v}{a'}}\log{\frac{v}{b'}}.
\end{equation*}
This is the same form as $Z_{N,W}^{(1)}.$ By the same considerations as above we obtain
\begin{align*} \nonumber
T_3 &= \frac{C_W\beta_2(q)}{q}(\log{v/q})^{\frac{1}{2}}+O\bigg(\frac{\exp(D_0^{1/4}\log\log{D_0})(\log\log{v})^3}{q}\bigg), 
\end{align*}
where
\begin{align*}
\beta_2(q) &= \frac{\mu^2(q)g_4(q)G_1(q)G_3(q)G_4(q)G_5(q)}{g_2(q)^2} = \frac{q^2(4q^2-3q+1)}{2(q-1)^2(2q^2-2q+1)}.
\end{align*}
Thus we obtain
\begin{align*}
Z_{N,W}^{(2)} = C_{W} &\sum_{\substack{D_0<q\leq v \\ q\equiv 1\,\,(\text{mod}\,\,4)}}\frac{2\beta_1(q)-\beta_2(q)}{q}g_6(q)(\log{(v/q)})^{\frac{1}{2}}\\
&+O\bigg(\exp(D_0^{1/4}\log\log{D_0})(\log\log{v})^3\sum_{\substack{D_0<q\leq v \\ q\equiv 1\,\,(\text{mod}\,\,4)}}\frac{g_6(q)}{q}\bigg),
\end{align*}
where $C_W$ is defined as in (\ref{eq:CW}). Recalling the definition of $g_6(q),$ we see the error term contributes 
\begin{equation*}
\ll \exp(D_0^{1/4}\log\log{D_0})(\log\log{v})^3\log{v}.
\end{equation*}
Now note that 
\begin{equation*}
\frac{2\beta_1(q)-\beta_2(q)}{q}g_6(q) = -\frac{(3q-2)(2q+1)\log{q}}{2(q+1)(2q^2-2q+1)} = -\frac{3\log{q}}{2q}+O\bigg(\frac{\log{q}}{q^2}\bigg)
\end{equation*}
This error contributes 
\begin{equation*}
\ll C_W \sum_{\substack{D_0<q\leq v \\ q\equiv 1\,\,(\text{mod}\,\,4)}}\frac{\log{q}}{q^2}(\log{v})^{\frac{1}{2}} \ll C_W (\log{v})^{\frac{1}{2}}
\end{equation*}
which is small. We are left with a main term 
\begin{equation*}
-\frac{3C_W}{2}  \sum_{\substack{D_0<q\leq v \\ q\equiv 1\,\,(\text{mod}\,\,4)}} \frac{\log{q}}{q} \bigg(\log{\frac{v}{q}}\bigg)^{\frac{1}{2}}.
\end{equation*}
By partial summation one can show 
\begin{equation*}
\sum_{\substack{D_0<q\leq v \\ q\equiv 1\,\,(\text{mod}\,\,4)}} \frac{\log{q}}{q} \bigg(\log{\frac{v}{q}}\bigg)^{\frac{1}{2}} = \frac{1}{3}(\log{v})^{\frac{3}{2}}+O((\log{v})^{\frac{1}{2}}\log{D_0}),
\end{equation*}
so that 
\begin{equation*}
Z_{N,W}^{(2)} = -\frac{C_W}{2}(\log{v})^{\frac{3}{2}}+O(\exp(D_0^{1/4}\log\log{D_0})(\log\log{v})^3\log{v}).
\end{equation*}
This simplifies to the stated result.
\end{enumerate}
\end{proof}



\begin{thebibliography}{9}


\bibitem{BFM}
W.~D.~Banks, T.~Freiberg and J.~Maynard,
\textit{On limit points of the sequence of normalised prime gaps,}
Proc. Lond. Math. Soc. {\bf113} (2016), no.~4, 515-539.

\bibitem{HBui}
H.~M.~Bui, D.~R.~Heath-Brown,
\textit{A note on the fourth moment of Dirichlet $L$-functions}
Acta Arith. {\bf 141} (2010), 335-344.

\bibitem{Davenport}
H~Davenport,
\textit{Multiplicative number theory,}
Springer GTM {\bf 74}.



\bibitem{GPY}
D.~A.~Goldston, J.~Pintz, C.~Y.~Y{\i}ld{\i}r{\i}m,
\textit{Primes in tuples I,}
Ann. Math. {\bf 170} (2009), no.~2, 819-862.

\bibitem{GPY2}
D.~A.~Goldston, S.~W.~Graham, J.~Pintz, C.~Y.~Y{\i}ld{\i}r{\i}m,
\textit{Small gaps between products of two primes,}
Proc. Lond. Math. Soc. {\bf 98} (2009), no.~3, 741–774.

\bibitem{Granville}
A.~Granville,
\textit{Primes in intervals of bounded length,}
Bull. Amer. Math. Soc. {\bf 52} (2015), no.~2, 171-222.



\bibitem{Hooley}
C.~Hooley,
\textit{On the intervals between numbers that are sums of two squares,}
Acta Math. {\bf 127} (1971), 279-297.


\bibitem{Jak}
D.~Jakobson,
\textit{Quantum limits on flat tori,}
Ann. Math. {\bf 145} (1997), no.~2, 235-266.

%

\bibitem{Maynard}
J.~Maynard,
\textit{Small gaps between primes,}
Ann.~Math. {\bf181} (2015), no.~1, 383-413.

\bibitem{Plak}
V.~A.~Plaksin,
\textit{The distribution of numbers representable as the sum of two squares,}
Math. \hbox{USSR-Izvestiya. {\bf 31} (1988),} no.~1, 171-191.

\bibitem{Poly}
D.~H.~J.~Polymath,
\textit{Variants of the Selberg sieve, and bounded intervals containing many primes,}
\hbox{Res.~Math.~Sci. {\bf 1} (2014),} no.~12, 83.


\bibitem{Tenen}
G.~Tenenbaum,
\textit{Introduction to analytic and probabilistic number theory,}
Amer. Math. Soc. GMT {\bf 163}.


\bibitem{Tolev}
D.~I.~Tolev,
\textit{On the remainder term in the circle problem in an arithmetic progression,}
Proc. Steklov Inst. Math. {\bf 276} (2012), 261-274.

\end{thebibliography}
\end{document}